\theoremstyle{plain}
\newtheorem{theorem}{Theorem}[section]
\newtheorem{lemma}[theorem]{Lemma}
\newtheorem{proposition}[theorem]{Proposition}
\newtheorem{corollary}[theorem]{Corollary}
\def\bc{\mathbb{C}}
\def\br{\mathbb{R}}
\def\bh{\mathbb{H}}
\def\fA{\mathfrak{A}}
\def\fB{\mathfrak{B}}
\def\fC{\mathfrak{C}}
\def\fG{\mathfrak{G}}
\def\fH{\mathfrak{H}}
\def\fI{\mathfrak{I}}
\def\fZ{\mathfrak{Z}}
\newcounter{commentlabel}
\begin{document}
  \title{Isometries of Clifford Algebras I}
\author{Patrick Eberlein}
\address{Department of Mathematics, University of North Carolina, Chapel Hill, NC 27599}
\email{pbe@email.unc.edu}
\subjclass[2010]{15A66 , 22F99}
\keywords{Clifford algebras, canonical symmetric bilinear form, isometries}
\date{\today}
\maketitle

\noindent $\mathbf{Abstract}$  Let V be a finite dimensional vector space over a field F of characteristic $\neq 2$, and let Q be a nondegenerate, symmetric, bilinear form on V.  Let $C\ell(V,Q)$ be the Clifford algebra determined by V and Q.  The bilinear form Q extends in a natural way to a nondegenerate, symmetric bilinear form $\overline{Q}$ on $C\ell(V,Q)$.  Let G be the group of isometries of $C\ell(V,Q)$ relative to $\overline{Q}$, and let $\fG$ be the Lie algebra of infinitesimal isometries of $C\ell(V,Q)$ relative to $\overline{Q}$.  We derive some basic structural information about $\fG$, and we compute G in the case that $F = \br, V = \br^{n}$ and Q is positive definite on $\br^{n}$.  In a sequel to this paper we determine $\fG$ in the case that $F = \br, V = \br^{n}$ and Q is nondegenerate on $\br^{n}$.
\newline

\noindent $\mathbf{Remark}$  The main result of this preprint is known, but is left to the reader as a "hard exercise".  We would like to leave this preprint on the ArXiv in case the details of one solution are of interest to the reader.  
\newline

\noindent I was informed by the referee of "Isometries of Clifford Algebras II" that the main results of "Isometries of Clifford Algebras I" and Isometries of "Clifford Algebras, II" are both contained in the table labeled $R_{p,q}$ at the top of page 271 of I. Porteus, Topological Geometry, Cambridge University Press, Cambridge, 1969.  This result of Porteous also appears later in table XB on page 737 of the paper "Scalar products of spinors and an extension of Brauer-Wall groups" by P. Lounesto.  This paper was published in Foundations of Physics, vol. 11, Nos. 9/10, 721-740. 
\newline

\noindent $\mathbf{Introduction}$   Let V be a finite dimensional vector space over a field F with characteristic $\neq 2$.  Let $Q : V \times V\rightarrow F$ be a nondegenerate, symmetric, bilinear form on V.  Let $C\ell(V,Q)$ denote the Clifford algebra determined by V and Q, where the multiplication in V is given by $v^{2} = - Q(v,v)$.  There is a canonical involutive anti-automorphism c of $C\ell(V,Q)$ such that $c \equiv - Id$ on V.  The nondegenerate, symmetric, bilinear  form Q on V extends in a canonical way to a nondegenerate, symmetric, bilinear form $\overline{Q}$ on $C\ell(V,Q)$.
\newline 

\noindent  In the case that $F = \br$ the Clifford algebras are algebra isomorphic to matrix algebras $M(p,K)$ or $M(p,K) \oplus M(p,K)$, where $K = \br, \bc$ or $\bh$ and p depends on dim V.  The Clifford algebras have played an important role in mathematics and physics.  See [LM] for further details.
\newline

\noindent Let $G = \{g \in C\ell(V,Q) : L_{g}~ \rm{and}~R_{g}~ \rm{preserve~\overline{Q}} \}$, and let $\fG = \{\xi \in C\ell(V,Q) : c(\xi) = - \xi \}$.  These definitions make sense for any field F of characteristic $\neq 2$, and $\fG$ is the Lie algebra of G in the case that $F = \br$ or $\bc$.  In this paper we determine some of the basic structure of $\fG$.  If $dim~V \neq 1~(mod~4)$, then the Killing form B is nondegenerate on $\fG$.  If $dim~V \equiv 1~(mod~4)$, then the center $\fZ(\fG)$ is 1-dimensional and $\fG = \fZ(\fG) \oplus \fH$, where $\fH$ is an ideal of $\fG$ and B is nondegenerate on $\fH$.   The bilinear forms B and $\overline{Q}$ have a common orthogonal basis of $\fG$.
\newline

\noindent In the special case that $V = \br^{n}$ and the symmetric, bilinear form Q is positive definite on $\br^{n}$ we compute the group G.  The group G is compact and isomorphic to $H$ or $H \times H$, where $H = \{g \in M(p,K) : g \cdot g^{*} = g^{*} g = 1 \}$ for some positive integer p that depends on n and some choice of $K = \br, \bc$ or $\bh$.
\newline

\noindent   Let nonnegative integers r,s be given.  Let $C\ell(r,s)$ be the Clifford algebra when $V = F ^{r+s}$ and Q has the matrix $\left[\begin{array}{ccc}I_{r} & 0 \\ 0 & - I_{s}\\ \end{array} \right]$ for a suitable basis $\{v_{1}, ... , v_{r+s} \}$ of $F^{r+s}$.  $C\ell(r,s)$ is uniquely determined  up to algebra isomorphism by r and s.  In this paper we consider the case that $F = \br$ and $s = 0$.  In a subsequent paper we compute $\fG_{r,s} = \{\xi \in C\ell(r,s) : c(\xi) = - \xi \}$ when $s > 0$ and $F = \br$.
\newline

\noindent  Here is a brief description by section of the paper.  In section 1 we present some basic well known facts about Clifford algebras and include some proofs for the sake of completeness.  In section 2 we discuss the equivalence of bilinear forms $Q_{1},Q_{2}$ on V and the algebra isomorphism between $C\ell(V,Q_{1})$ and $C\ell(V,Q_{2})$ when $Q_{1}$ and $Q_{2}$ are equivalent.  In section 3 we discuss the relation between zero divisors and invertible elements in $C\ell(V,Q)$.  In section 4 we define and discuss the canonical nondegenerate, bilinear form $\overline{Q}$ on $C\ell(V,Q)$ that extends Q on V.  The bilinear form $\overline{Q}$ is characterized by a small set of axioms.  In section 5 we determine the centers of $C\ell(V,Q$ and $\fG$.  In section 6 we discuss the group of isometries G on $C\ell(V,Q)$ and the Lie algebra $\fG$ of infinitesimal isometries $\fG$ of $C\ell(V,Q)$.  We determine the Killing form B of $\fG$.  In section 7 we discuss some quaternionic linear algebra needed later.  In section 8 we list the matrix algebras that are algebra isomorphic to $C\ell(\br^{n},Q)$, where Q is a symmetric, positive definite bilinear form on $\br^{n}$.  In section 9 we find a special isomorphism between $C\ell(\br^{n},Q)$ and a matrix algebra A, and we use this isomorphism to determine $G = \{g \in C\ell(\br^{n},Q) : L_{g}~ \rm{and}~R_{g}~ \rm{preserve~\overline{Q}} \}$. 

\section{Preliminaries}

\noindent  Good references for this section are [H], [LM, chapter 1] and [FH,pp.299-315].  
\newline

\noindent $\mathbf{Universal~ mapping~ definition~ of~ the~ Clifford~ algebras}$

\noindent  Let V be a finite dimensional vector space over a field F, which we shall always assume has characteristic $\neq 2$.  Let Q be a nondegenerate, symmetric bilinear form on V.  The next result is fundamental.  For a proof  see, for example, Proposition 1.1 of Chapter 1, section 1 of [LM].

\begin{proposition}  Let (V,Q) be as above.   Then there exists an F-algebra $C\ell(V,Q)$ and an injective linear map $i : V \rightarrow C\ell(V,Q)$ with the following property :   Let $\fA$ be any associative finite dimensional  algebra over F, and let $\sigma : V \rightarrow \fA$ be any F - linear map such that $\sigma(v) \cdot \sigma(v) = - Q(v,v)1$ for all v $\in$ V.  Then there exists a unique algebra homomorphism $j : C\ell(V,Q) \rightarrow \fA$ such that $j \circ i = \sigma$.
\end{proposition}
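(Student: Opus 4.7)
The plan is to construct $C\ell(V,Q)$ explicitly as a quotient of the tensor algebra, verify the universal property by diagram chasing, and finally establish injectivity of $i$ by exhibiting a faithful action of the constructed algebra on the exterior algebra $\Lambda(V)$.

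First, form the tensor algebra $T(V) = \bigoplus_{k \geq 0} V^{\otimes k}$, which carries the usual universal property for linear maps from $V$ into associative $F$-algebras. Let $I(V,Q)$ be the two-sided ideal in $T(V)$ generated by the elements $v \otimes v + Q(v,v)\cdot 1$ for $v \in V$. Define $C\ell(V,Q) = T(V)/I(V,Q)$ and let $i : V \to C\ell(V,Q)$ be the composition of the canonical inclusion $V \hookrightarrow T(V)$ with the quotient map. By construction, $i(v)\cdot i(v) = -Q(v,v)\cdot 1$ in $C\ell(V,Q)$.

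For the universal property, suppose $\sigma : V \to \fA$ is $F$-linear with $\sigma(v)\sigma(v) = -Q(v,v)\cdot 1$ for all $v \in V$. By the universal property of $T(V)$, $\sigma$ extends uniquely to an algebra homomorphism $\tilde\sigma : T(V) \to \fA$. The hypothesis on $\sigma$ says precisely that $\tilde\sigma$ annihilates every generator $v\otimes v + Q(v,v)\cdot 1$ of $I(V,Q)$, hence annihilates the whole ideal, hence descends to a unique algebra homomorphism $j : C\ell(V,Q) \to \fA$ with $j \circ i = \sigma$. Uniqueness of $j$ follows because $i(V)$ generates $C\ell(V,Q)$ as an algebra.

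The main obstacle is the injectivity assertion for $i$; a priori one could imagine that forcing the quadratic relations collapses part of $V$. To rule this out, I would produce an algebra $\fA$ and a linear map $\sigma : V \to \fA$ that is already injective and satisfies $\sigma(v)^{2} = -Q(v,v)\cdot 1$; then $j \circ i = \sigma$ forces $i$ to be injective. The standard choice is $\fA = \mathrm{End}_{F}(\Lambda(V))$, with $\sigma(v)(w) = v \wedge w - \iota_{v}^{Q}(w)$, where $\iota_{v}^{Q}$ is the contraction by $v$ defined using $Q$ (nondegeneracy of $Q$ is needed to define $\iota_{v}^{Q}$ coherently on all of $\Lambda(V)$). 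A direct computation on decomposable elements shows $\sigma(v)^{2} = -Q(v,v)\cdot \mathrm{Id}$, and evaluating $\sigma(v)$ on $1 \in \Lambda^{0}(V)$ gives $v \in \Lambda^{1}(V)$, so $\sigma$ is injective. The universal property then produces $j : C\ell(V,Q) \to \mathrm{End}(\Lambda(V))$ with $j \circ i = \sigma$, whence $i$ is injective. Finite dimensionality of $C\ell(V,Q)$, if desired, follows from the standard spanning argument by ordered monomials $v_{i_{1}}\cdots v_{i_{k}}$ in a basis of $V$.
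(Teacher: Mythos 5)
Your proof is correct, and it fills a gap that the paper itself leaves unfilled: the paper does not prove Proposition 1.1 but refers the reader to Proposition 1.1 of Chapter~I, \S1 of [LM]. Your argument --- quotient of the tensor algebra by the ideal $I(V,Q)$ generated by $v\otimes v + Q(v,v)\cdot 1$, lift $\sigma$ through $T(V)$, descend to the quotient, and then certify injectivity of $i$ via the faithful representation $\sho(v) = v\wedge(\cdot) - \iota^{Q}_{v}$ on $\Lambda(V)$ --- is exactly the standard route taken in [LM] and elsewhere, and it is consistent with the paper's later remark (in the proof of Corollary 1.2) that $C\ell(V,Q)=T(V)/I$. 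There is no circularity: you do not invoke Proposition~1.3's basis, which in the paper's logical order depends on Proposition~1.1. One small inaccuracy worth fixing: the parenthetical claim that ``nondegeneracy of $Q$ is needed to define $\iota^{Q}_{v}$ coherently'' is not right. The interior product $\iota^{Q}_{v}$ is the odd derivation of $\Lambda(V)$ determined by $\iota^{Q}_{v}(w)=Q(v,w)$ on $\Lambda^{1}(V)$, and since $\iota^{Q}_{v}(w_{1}\wedge\cdots\wedge w_{k})=\sum_{j}(-1)^{j-1}Q(v,w_{j})\,w_{1}\wedge\cdots\widehat{w_{j}}\cdots\wedge w_{k}$ is alternating in the $w_{j}$'s it is well defined for any bilinear form $Q$, degenerate or not; nondegeneracy plays no role in this step. (Here I have written $\sho$ for your $\sigma$; the computation $\sho(v)^{2}=-Q(v,v)\,\mathrm{Id}$ and $\sho(v)(1)=v$ is correct as you state.)

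\newcommand{\sho}{\sigma}
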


\noindent $\mathbf{Remark}$  The extension property of the result above characterizes $C\ell(V,Q)$ up to algebra isomorphism.
\newline

\noindent  In the sequel we may assume without loss of generality, since $i : V \rightarrow C\ell(V,Q)$ is injective, that  $i : V \rightarrow C\ell(V,Q)$ is the inclusion map.

\begin{corollary} Let  V be a finite dimensional vector space over a field F with characteristic $\neq 2$ , and let $Q : V \times V \rightarrow F$ be a nondegenerate, symmetric, bilinear form on V.  Let  $C \ell(V,Q)$ denote the Clifford algebra determined by V.  Then

	$1)~ xy + yx = - 2Q( x,y) $ \hspace{1in} for all $x,y \in V$.
	
	$2)$  Let $\fA$ be a subalgebra of $C\ell(V,Q)$ that contains V.  Then $\fA = C\ell(V,Q)$.
\end{corollary}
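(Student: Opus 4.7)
The plan is to handle the two parts quite differently. For $(1)$ I would do a straightforward polarization. By construction of $C\ell(V,Q)$ we have $v^{2} = -Q(v,v) \cdot 1$ for every $v \in V$. Applying this identity to $v = x+y$, expanding $(x+y)(x+y)$ by distributivity in the algebra, and then subtracting the identities $x^{2} = -Q(x,x)\cdot 1$ and $y^{2} = -Q(y,y)\cdot 1$, the cross terms combined with the symmetry and bilinearity of $Q$ leave exactly $xy + yx = -2Q(x,y)\cdot 1$.

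For $(2)$, my goal is equivalent to showing that the unital subalgebra of $C\ell(V,Q)$ generated by $V$ is all of $C\ell(V,Q)$; then any subalgebra $\fA$ containing $V$ must contain this generating subalgebra and hence all of $C\ell(V,Q)$. My plan is to invoke Proposition 1.1 twice. Let $\fB \subseteq C\ell(V,Q)$ denote the unital subalgebra generated by $i(V)$. The corestricted inclusion $\sigma : V \to \fB$ is $F$-linear and satisfies $\sigma(v)^{2} = -Q(v,v)\cdot 1$, so the universal property produces a unique algebra homomorphism $j : C\ell(V,Q) \to \fB$ with $j \circ i = \sigma$. Composing with the inclusion $\iota : \fB \hookrightarrow C\ell(V,Q)$ yields an endomorphism $\iota \circ j$ of $C\ell(V,Q)$ that restricts to $i$ on $V$. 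Since the identity map $\mathrm{Id}_{C\ell(V,Q)}$ also restricts to $i$ on $V$, uniqueness in Proposition 1.1 (applied with the target algebra taken to be $C\ell(V,Q)$ itself) forces $\iota \circ j = \mathrm{Id}$. Hence $\iota$ is surjective, so $\fB = C\ell(V,Q)$.

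The main obstacle is minimal: part $(1)$ is essentially a one-line calculation, and the only care needed in part $(2)$ is the clean application of the uniqueness clause in the universal property so that the composite $\iota \circ j$ is genuinely forced to be the identity. The one pitfall worth noting is that Proposition 1.1 is phrased for unital algebras, so "subalgebra" in $(2)$ should be understood to include the unit $1$; otherwise one needs to observe separately that $1 \in \fA$ whenever $\fA$ contains some $v \in V$ with $Q(v,v) \neq 0$, which exists because $Q$ is nondegenerate, via $1 = -Q(v,v)^{-1}v^{2}$.
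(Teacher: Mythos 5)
Your proposal is correct and takes essentially the same approach as the paper in both parts: part (1) is the same polarization computation, and part (2) is the same double application of the universal property to conclude that the composite of $j$ with the inclusion into $C\ell(V,Q)$ must equal the identity by uniqueness. The only cosmetic differences are that the paper applies Proposition 1.1 directly with target $\fA$ and finishes by a dimension count, while you pass through the subalgebra generated by $V$ and read off surjectivity of the inclusion immediately; your closing remark about extracting $1 \in \fA$ via $1 = -Q(v,v)^{-1}v^2$ is a sensible clarification of the unitality point that the paper passes over silently.
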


\begin{proof}   We prove (1).  First note that $x \cdot x = - Q(x,x)$ for all $x \in V$ by the construction of $C\ell(V,Q) = T(V) / I$ above.  Let x,y be elements of V.  We compute $x^{2} + y^{2} + xy + yx = (x+y)(x+y) = - Q(x+y , x+y) = - Q(x,x) - Q(y,y) - 2 Q(x,y) = x^{2} + y^{2} - 2 Q(x,y).$
\newline

\noindent  We prove (2).  Let $\sigma : V \rightarrow \fA$ be the inclusion map.  By Proposition 1.1 there exists a unique algebra homomorphism $j : C\ell(V,Q) \rightarrow \fA$ such that $j \circ i = \sigma$.  Let $k : \fA \rightarrow C\ell(V,Q)$ be the inclusion map.  Then k is an algebra homomorphism, and hence $\varphi = k \circ j : C\ell(V,Q) \rightarrow C\ell(V,Q)$ is an algebra homomorphism such that $\varphi \circ i  = i$.  It follows that $\varphi = Id$ on $C\ell(V,Q)$ by the uniqueness part of Proposition 1.1 applied to the linear map $i : V \rightarrow C\ell(V,Q)$.  In particular j is injective, which implies that $dim~C\ell(V,Q) \leq dim~\fA \leq dim~C\ell(V,Q)$.  We conclude that $\fA = C\ell(V,Q)$. 
\end{proof} 
	
\noindent $\mathbf{Example~1}$ Let $V = \br^{n}$ and let r,s be nonnegative integers such that $r+s = n$.  For elements $v = (v_{1}, v_{2}, ... , v_{n})$ and $w = (w_{1}, w_{2}, ... , w_{n})$ of $\br^{n}$ define $Q(v,w) = \sum_{i=1}^{r} v_{i} w_{i} - \sum_{j=1}^{s} v_{r+j} w_{r+j}$. The classical case first considered is when $r = n$ and $s = 0$.
\newline

\noindent  $\mathbf{Example~2}$ Let $V = \br^{2}$ and define $Q(v,w) = v \cdot w$, the dot product.  We assert that $C\ell(\br^{2},Q) = \bh$, the quaternions.  Let $\{e_{1}, e_{2} \}$ be the standard basis of $\br^{2}$.  If $i = e_{1}, j = e_{2}$ and $k = e_{1} \cdot e_{2}$, then we leave it as an exercise to show that $C\ell(\br^{2},Q) = \bh$.
 \newline
	
\noindent The Clifford algebra $C \ell(V,Q)$ becomes a Lie algebra with the bracket operation given by

\hspace{1in}	$ [x,y] =  xy - yx$ \hspace{1in} for all $x,y \in C\ell(V,Q)$
\newline
 
\noindent $\mathbf{Canonical~ automorphism~ \alpha~ of~ C\ell(V,Q)}$
\newline

\noindent Let $\alpha : V \rightarrow C \ell(V,Q)$ be the $\br$- linear map given by $\alpha(v) = - v$.   Then clearly $\alpha(v) \cdot \alpha(v) = v \cdot v = - Q(v,v) 1$ for all v $\in V$.  Hence $\alpha$ extends to an algebra homomorphism of $C\ell(V,Q)$ by Proposition 1.1.  If $\beta = \alpha \circ \alpha$, then $\beta = Id$ on V and $\beta(v) \cdot \beta(v) = v \cdot v = -~ Q(v,v) 1$ for all v $\in$ V.  By the universal mapping definition $\beta$ extends uniquely to an algebra homomorphism of $C\ell(V,Q)$.  Since the identity is such an extension of $\beta$ it follows by uniqueness that $\beta = \alpha \circ \alpha$ is the identity on $C\ell(V,Q)$.  Hence $\alpha$ is an automorphism of $C\ell(V,Q)$.
\newline

\noindent  $\mathbf{Canonical~ anti-automorphisms~ \tau, c~of~ C\ell(V,Q)}$
\newline

\noindent We define two anti-automorphisms $\tau$ and $c$ (transpose and conjugation) of $C\ell(V,Q)$  Before doing so we recall the definition of $\mathit{opposite}$ algebra.  Let A be an associative, finite dimensional algebra over $\br$.  The opposite algebra $\tilde{A}$ equals A as a vector space.  However, if $\mu : A \times A \rightarrow A$ is the multiplication operation in A, then we define a multiplication operation $\tilde{\mu} : \tilde{A} \times \tilde{A} \rightarrow \tilde{A}$ by $\tilde{\mu}(x,y) = \mu(y,x)$.  It is easy to check that $\tilde{\mu}$ is associative and that $\tilde{A}$ is an algebra.
\newline

\noindent We define the anti-automorphism $\tau$.  Let $\fC = C\ell(V,Q)$ and let $\tilde{\fC}$ denote the opposite algebra.  Let $\sigma : V \rightarrow \tilde{\fC}$ denote the inclusion map.  For every v $\in$ V $\sigma(v) \cdot \sigma(v) = v \cdot v = - Q(v,v)1$, and hence there exists a unique algebra homomorphism $\tau : \fC \rightarrow \tilde{\fC}$ such that $\tau \circ i = \sigma$.  It is easy to see that $\tau : \fC \rightarrow \fC$ is an anti-homomorphism that is the identity on V.  Moreover, $\tau^{2} = Id$ on $\fC$ since both the identity map and $\beta = \tau^{2}$ are automorphisms of $\fC$ that are the identity on V.  Hence $\tau$ is an anti-automorphism of $\fC$.
\newline

\noindent  We define $c = \alpha~ \circ~ \tau$.  Clearly c is an anti-automorphism of $\fC$.  Moreover, since c is $- Id$ on V, it follows that $c^{2} = Id$ since both the identity map and $\gamma = c^{2}$ are automorphisms of $\fC$ that are the identity on V.
\newline

\noindent $\mathbf{Action~ of~ \alpha, \tau~\rm}$ and $\mathbf{c~on~monomials}$
\newline

\noindent   Let $v_{1}, ... , v_{N}$ be arbitrary elements of V, and let $v = v_{1} \cdot ... \cdot v_{N}$.  Since $\alpha$ is an automorphism of $\fC$ while $\tau$ and c are anti-automorphsms we obtain immediately from the definitions
\newline

\noindent \hspace{.5in}  $\alpha(v) = (-1)^{N} v$

\noindent \hspace{.5in}  $\tau(v) = v_{N} \cdot v_{N-1} \cdot ... \cdot v_{2} \cdot v_{1} $

\noindent \hspace{.5in}  $c(v) = (-1)^{N} v_{N} \cdot v_{N-1} \cdot ... \cdot v_{2} \cdot v_{1} $
\newline

\noindent  As an immediate consequence of the statements above we see that any two of the maps $\{\alpha, \tau,c \}$ commute.
\newline

\noindent $\mathbf{Bases~ and~ dimension~ of~ C\ell(V,Q)}$
\newline

\noindent  Let V be a finite dimensional vector space over a field F of characteristic $\neq 2$.  A basis $\{e_{1}, e_{2}, ... , e_{n} \}$ of V is said to be $\mathit{Q-orthogonal}$ if $Q(e_{i}, e_{j}) = 0$ whenever $i \neq j$.  Note that $Q(e_{i}, e_{i}) \neq 0$ for all i since Q is nondegenerate.  A Q-orthogonal basis for V always exists by the nondegeneracy of Q (see for example Theorem 3.1 , Chapter XV, section 2 of [L]).
\newline

\noindent Let $p : C\ell(V,Q) \rightarrow F$ be the linear map such that p(x) is the component of x in F for all $x \in C\ell(V,Q)$.  Define a bilinear form $\overline{Q} : C\ell(V,Q)  \times C\ell(V,Q) \rightarrow F$ by $\overline{Q}(x,y) = p(x \cdot c(y))$ for all $x,y \in C\ell(V,Q)$.
\newline

\noindent Let $\fB = \{e_{1}, ... , e_{n} \}$ be an orthogonal basis of V relative to Q.  Let $\fI_{k}$ denote the set of multi-indices $I = (i_{1}, ... , i_{k})$, where $1 \leq i_{1} < i_{2} < ... < i_{k} \leq n$.  For each multi-index $I = (i_{1}, ... , i_{k}) \in \fI_{k}$ let $e_{I}$ denote the product $e_{i_{1}} \cdot e_{i_{2}} \cdot ... \cdot e_{i_{k}}$.  Let $\fI = \bigcup_{k=1}^{n} \fI_{k}$.   If $I \in \fI_{k}$, then set $|I| = k$.
\newline

\begin{proposition}  Let $\fB =  \{e_{1}, e_{2}, ... , e_{n} \}$ be any Q-orthogonal  basis  of V.  Let $\fB' = \{1 \} \cup \{e_{I} : I \in \fI \}$.  Then 

	1)  $\fB'$ is orthogonal relative to $\overline{Q}$.  Moreover, $\overline{Q}(x,x) \neq 0$ for all $x \in \fB^{\prime}$.

	2)  $\fB'$ is a basis of $C\ell(V,Q)$. 
	
	3)  $\overline{Q}$ is symmetric and nondegenerate on $C\ell(V,Q)$. 
	
\end{proposition}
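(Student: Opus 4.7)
The plan is to establish (2), (1), (3) in that order, since the scalar projection $p$ is only unambiguously defined once $\fB'$ is known to be a basis. I begin with spanning: the linear span $W$ of $\fB'$ contains $1$ and $V$, and the anticommutation relations $e_i e_j = -e_j e_i$ for $i \neq j$ together with $e_i^2 = -Q(e_i,e_i)$ reduce every product $e_I \cdot e_J$ to a scalar multiple of $e_{I \triangle J}$ (symmetric difference). Hence $W$ is a subalgebra of $C\ell(V,Q)$ containing $V$, and Corollary 1.2(2) gives $W = C\ell(V,Q)$.

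The crucial step is linear independence, which I would handle via the Chevalley representation on the exterior algebra. Define $\phi : V \to \mathrm{End}_F(\Lambda V)$ by $\phi(v) = \epsilon(v) - \iota_v^Q$, where $\epsilon(v)$ is exterior multiplication and $\iota_v^Q$ is contraction against the linear functional $Q(v,\cdot)$. The identity $\iota_v^Q \epsilon(v) + \epsilon(v) \iota_v^Q = Q(v,v)\,\mathrm{Id}$ in $\mathrm{End}(\Lambda V)$ yields $\phi(v)^2 = -Q(v,v)\,\mathrm{Id}$, so by Proposition 1.1 the map $\phi$ extends to an algebra homomorphism $\rho : C\ell(V,Q) \to \mathrm{End}_F(\Lambda V)$. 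Evaluating at $1 \in \Lambda^0 V$ gives a linear map $\Phi(x) = \rho(x)(1)$ with $\Phi(e_I) = e_{i_1} \wedge \cdots \wedge e_{i_k}$, since the $Q$-orthogonality of $\fB$ kills every contraction that would otherwise appear in the inductive computation. Because the wedge monomials $e_{i_1} \wedge \cdots \wedge e_{i_k}$ together with $1$ are linearly independent in $\Lambda V$, so is $\fB'$ in $C\ell(V,Q)$. Combined with the spanning step this proves (2) and in particular gives $\dim_F C\ell(V,Q) = 2^n$.

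With $\fB'$ a basis, the projection $p$ onto the $F \cdot 1$ summand is canonical and $\overline{Q}$ is well defined. The formulas for $c$ on monomials give $c(e_I) = (-1)^{|I|(|I|+1)/2} e_I$, and iterating the anticommutation relations shows that $e_I \cdot e_J$ is a nonzero scalar multiple of $e_{I \triangle J}$ with scalar equal to $\pm \prod_{i \in I \cap J} (-Q(e_i,e_i))$. Hence $\overline{Q}(e_I,e_J) = p(e_I \, c(e_J))$ vanishes when $I \neq J$ (the product is a scalar multiple of some $e_K$ with $K \neq \emptyset$) and equals $\prod_{i \in I} Q(e_i,e_i) \neq 0$ when $I = J$; the cross terms involving $1 \in \fB'$ are immediate. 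This proves (1). For (3), symmetry follows from $p \circ c = p$ (since $c$ stabilizes $F \cdot 1$ and sends each $e_I$ to $\pm e_I$) combined with $c$ being an involutive anti-automorphism: $\overline{Q}(x,y) = p(x c(y)) = p(c(x c(y))) = p(y c(x)) = \overline{Q}(y,x)$. Nondegeneracy follows by expanding any $x$ in the basis $\fB'$ and pairing against each basis element, using that $\overline{Q}(e_I,e_I) \neq 0$. The principal obstacle is the linear independence step in Phase 2: without a faithful representation on an auxiliary $2^n$-dimensional space, there is no elementary way to rule out relations among the $e_I$, and the verification $\phi(v)^2 = -Q(v,v)\,\mathrm{Id}$ rests on the $\epsilon$-$\iota$ anticommutation identity in $\Lambda V$.
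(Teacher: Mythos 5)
Your proof is correct, and it takes a genuinely different route from the paper on the one step that matters. The paper defines the scalar projection $p$ up front, proves (1) the $\overline{Q}$-orthogonality of $\fB'$ by reducing each $e_I\, c(e_J)$ to $\pm e_K$ and reading off $p$, and then obtains linear independence in (2) as a \emph{consequence} of that orthogonality (together with the spanning argument you both share). This is clean and short, but it tacitly leans on the fact that $p$ is already well-defined — i.e. that $F\cdot 1$ splits off canonically and that $p(e_K)=0$ for nonempty $K$ — which is really the same content as linear independence and is being imported from the cited result in [LM]. You correctly identify this as the pressure point, reorder the proof, and close the gap internally: you build the Chevalley representation $\rho:C\ell(V,Q)\to\mathrm{End}_F(\Lambda V)$, $\phi(v)=\epsilon(v)-\iota_v^Q$, check $\phi(v)^2=-Q(v,v)\mathrm{Id}$, and evaluate at $1\in\Lambda^0 V$ to show $\Phi(e_I)=e_{i_1}\wedge\cdots\wedge e_{i_k}$, giving linear independence of $\fB'$ directly and with it $\dim C\ell(V,Q)=2^n$. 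Only then do you do (1) and (3), exactly as the paper does them. The tradeoff: the paper's proof is tighter assuming the reader already trusts the $2^n$-dimension fact from the reference; yours is longer but self-contained, needing only the universal property of Proposition 1.1, and it makes the well-definedness of $p$ an honest consequence rather than a borrowed premise. Both are valid; yours is the more careful exposition if this proposition is meant to be the point at which the $2^n$-dimension is established rather than quoted.
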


\noindent $\mathbf{Remark}$  It follows from 2) that $dim~ C\ell(V,Q) = 2^{n}$.  We shall see below in Proposition 4.1 that $\overline{Q}$  is uniquely characterized by certain properties.

\begin{proof}  \noindent \noindent  We prove 1).  Let $x = e_{i_{1}} \cdot e_{i_{2}} \cdot ... \cdot e_{i_{r}}$ and $y = e_{j_{1}} \cdot e_{j_{2}} \cdot ... \cdot e_{j_{s}}$ be any elements of $\fB'$.  Then $\overline{Q}(x,y) = p(e_{i_{1}} \cdot e_{i_{2}} \cdot ... \cdot e_{i_{r}} \cdot c(e_{j_{s}}) \cdot ... \cdot c(e_{j_{1}})) = \pm p(e_{i_{1}} \cdot e_{i_{2}} \cdot ... \cdot e_{i_{r}} \cdot e_{j_{s}} \cdot ... \cdot e_{j_{1}}) = 0$ unless $r = s$ and $\{i_{1}, ... , i_{r} \} = \{j_{1} , ... , j_{r} \}$.  If $x = y$, then  $\overline{Q}(x,x) \neq 0$ since $Q(e_{i},e_{i}) \neq 0$ for every i.
\newline

\noindent We prove 2).   By 1) of Corollary 1.2 it follows that any finite product of elements from $\fB^{\prime}$ can be written as $c~ e_{i_{1}} \cdot e_{i_{2}} \cdot ... \cdot e_{i_{k}}$ for some integers $1 \leq i_{1} < i_{2} < ... < i_{k} \leq n$ and some nonzero constant $c \in F$.  It follows that if $\fA = \br-$ span $\fB^{\prime}$, then $\fA$ is a subalgebra of $C\ell(V,Q)$ that contains V.  From 2) of Corollary 1.2 we see that $\fA = C\ell(V,Q)$ and hence $\fB^{\prime}$ spans $C\ell(V,Q)$.  From 1) of this Proposition it follows that $\fB^{\prime}$ is linearly independent on $C\ell(V,Q)$.
\newline

\noindent  The assertion 3) follows immediately from assertions 1) and 2).

\end{proof}

\section{Equivalence of bilinear forms}

\noindent Let V be a finite dimensional vector space over a field F of characteristic $\neq 2$, and let $Q_{i} : V \times V \rightarrow F$ be symmetric bilinear forms for $i = 1,2$.  The bilinear forms $Q_{1}, Q_{2}$ are said to be $\mathit{equivalent}$ if there exists a nonsingular linear transformation $T : V \rightarrow V$ such that $Q_{2}(v,w) = Q_{1}(T(v), T(w))$ for all $v,w \in V$.  It is easy to see that being equivalent is an equivalence relation on the vector space of symmetric bilinear forms on V.

\begin{proposition}  Let V be a finite dimensional vector space over F, and let $Q_{1}, Q_{2}$ be equivalent, nondegenerate, symmetric, bilinear forms on V.  Then $C\ell(V,Q_{1})$ is algebra isomorphic to $C\ell(V,Q_{2})$.
\end{proposition}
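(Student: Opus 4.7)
The plan is to exploit the universal mapping property (Proposition 1.1) by pushing the equivalence $T$ through the Clifford construction. Let $T : V \to V$ be a nonsingular linear map realizing the equivalence, so $Q_2(v,w) = Q_1(T(v),T(w))$ for all $v,w \in V$. Denote by $i_1 : V \to C\ell(V,Q_1)$ and $i_2 : V \to C\ell(V,Q_2)$ the canonical inclusions. I will build mutually inverse algebra homomorphisms between the two Clifford algebras.

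First, I would form the F-linear map $\sigma := i_1 \circ T : V \to C\ell(V,Q_1)$. For any $v \in V$, the computation $\sigma(v) \cdot \sigma(v) = T(v) \cdot T(v) = -Q_1(T(v),T(v))\,1 = -Q_2(v,v)\,1$ shows that $\sigma$ satisfies the hypothesis of Proposition 1.1 for the pair $(V,Q_2)$. That proposition then yields a unique algebra homomorphism $j : C\ell(V,Q_2) \to C\ell(V,Q_1)$ with $j \circ i_2 = i_1 \circ T$. Running the same argument with $T^{-1}$ in place of $T$ (which realizes the equivalence in the opposite direction) produces an algebra homomorphism $j' : C\ell(V,Q_1) \to C\ell(V,Q_2)$ with $j' \circ i_1 = i_2 \circ T^{-1}$.

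Next I would verify that $j$ and $j'$ are inverses. The composition $j \circ j' : C\ell(V,Q_1) \to C\ell(V,Q_1)$ is an algebra homomorphism and satisfies $(j \circ j') \circ i_1 = j \circ i_2 \circ T^{-1} = i_1 \circ T \circ T^{-1} = i_1$. Applying the uniqueness clause of Proposition 1.1 to the inclusion $i_1$ itself (viewed as a map into the associative algebra $C\ell(V,Q_1)$ satisfying $i_1(v)^2 = -Q_1(v,v)\,1$), the identity map is the only algebra homomorphism of $C\ell(V,Q_1)$ extending $i_1$, so $j \circ j' = \mathrm{Id}$. The symmetric argument on $C\ell(V,Q_2)$ gives $j' \circ j = \mathrm{Id}$, whence $j$ is an algebra isomorphism.

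There is essentially no hard step here; the only subtle point is being careful to apply the universal property on the correct side (i.e.\ noting that $\sigma = i_1 \circ T$ satisfies the $Q_2$-relation, not the $Q_1$-relation, so it factors through $C\ell(V,Q_2)$) and invoking uniqueness cleanly to get that the two maps compose to the identity. This same style of argument has already appeared in the proof of Corollary 1.2(2), so no new machinery is required.
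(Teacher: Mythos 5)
Your argument is correct. You and the paper start identically: both apply Proposition 1.1 to the linear map $v \mapsto T(v)$ (your $\sigma = i_1 \circ T$, the paper's $i$), observe that it satisfies the $Q_2$-relation $\sigma(v)^2 = -Q_2(v,v)\,1$, and obtain the algebra homomorphism $j : C\ell(V,Q_2) \to C\ell(V,Q_1)$. Where you diverge is in showing that $j$ is an isomorphism. The paper notes that $j$ is surjective because its image contains $V$, which generates $C\ell(V,Q_1)$ as an algebra by Corollary 1.2(2), and then invokes equality of dimensions ($2^n$ on both sides by Proposition 1.3) to conclude bijectivity. You instead run the universal property a second time with $T^{-1}$ to produce a candidate inverse $j'$, and then use the uniqueness clause of Proposition 1.1 to show $j \circ j' = \mathrm{Id}$ and $j' \circ j = \mathrm{Id}$. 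Both are clean and correct; your version avoids appealing to the dimension formula and would work verbatim for infinite-dimensional $V$ (or before dimensions have been computed), while the paper's is slightly more economical given that the dimension count is already available. The small point you flagged --- that $T^{-1}$ realizes the equivalence in the reverse direction, $Q_1(v,w) = Q_2(T^{-1}v, T^{-1}w)$ --- does indeed check out, so your second application of the universal property is legitimate.
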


\begin{proof}  By the equivalence of $Q_{1}$ and $Q_{2}$ there exists  a nonsingular linear transformation $T : V \rightarrow V$ such that $Q_{2}(v,w) = Q_{1}(T(v), T(w))$ for all $v,w \in V$.  Define $i : V \rightarrow C\ell(V, Q_{1})$ by $i(v) = T(v)$.  Clearly i is linear and injective.  Moreover, if $v \in V$, then $- Q_{2}(v,v) = - Q_{1}(T(v),T(v))= T(v)^{2} = i(v)^{2}$.  Hence by the universal mapping definition of $C\ell(V,Q_{1})$ the injective linear map i extends to an algebra homomorphism $\tilde{i} : C\ell(V, Q_{2}) \rightarrow C\ell(V, Q_{1})$.  The map $\tilde{i}$ is surjective since $\tilde{i}(V) = i(V) = V$ generates $C\ell(V,Q_{1})$ as an algebra by 2) of Corollary 1.2.  We conclude that the map $\tilde{i}$ is an isomorphism since the dimensions of $C\ell(V, Q_{1})$ and $C\ell(V, Q_{2})$ are the same.
\end{proof}

\begin{corollary}  Let $Q_{1},Q_{2} : \br^{n} \times \br^{n} \rightarrow \br$ be symmetric, positive definite, bilinear forms on $\br^{n}$.  Then $C\ell(\br^{n}, Q_{1})$ is algebra isomorphic to $C\ell(\br^{n}, Q_{2})$.
\end{corollary}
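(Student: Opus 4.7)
The plan is to reduce the corollary to Proposition 2.1 by showing that any two symmetric, positive definite bilinear forms on $\br^{n}$ are equivalent in the sense defined at the start of Section 2. Once equivalence is established, Proposition 2.1 immediately yields the desired algebra isomorphism between $C\ell(\br^{n},Q_{1})$ and $C\ell(\br^{n},Q_{2})$.

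To establish equivalence, I would first invoke the existence of an orthogonal basis of $\br^{n}$ relative to each $Q_{i}$, cited already in the paper (Theorem 3.1, Chapter XV, section 2 of [L], applied in the proof of Proposition 1.3). Since each $Q_{i}$ is positive definite, the diagonal entries $Q_{i}(e_{j}^{(i)},e_{j}^{(i)})$ are strictly positive, so we may rescale each basis vector by the real square root of its norm and obtain a $Q_{i}$-orthonormal basis $\{f_{1}^{(i)}, \ldots, f_{n}^{(i)}\}$ with $Q_{i}(f_{j}^{(i)}, f_{k}^{(i)}) = \delta_{jk}$.

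Next I would define the nonsingular linear map $T : \br^{n} \rightarrow \br^{n}$ by $T(f_{j}^{(2)}) = f_{j}^{(1)}$ for each $j$, and verify on basis elements that $Q_{1}(T(f_{j}^{(2)}), T(f_{k}^{(2)})) = Q_{1}(f_{j}^{(1)}, f_{k}^{(1)}) = \delta_{jk} = Q_{2}(f_{j}^{(2)}, f_{k}^{(2)})$. Since both sides are bilinear, this identity extends to all of $\br^{n}$, and $Q_{1}$ is equivalent to $Q_{2}$. Proposition 2.1 then delivers the algebra isomorphism.

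There is no real obstacle here; the only subtlety is the use of positive definiteness, which is needed to take real square roots when rescaling the orthogonal basis to an orthonormal one. Without positive definiteness, the signature of $Q$ would be an obstruction to equivalence, which is exactly why the corollary is restricted to the positive definite case. The proof is entirely short, so I would simply present the two steps above compactly.
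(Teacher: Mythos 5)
Your proposal is correct and follows essentially the same route as the paper's proof: both construct $Q_{i}$-orthonormal bases (using positive definiteness), define a linear isomorphism $T$ sending one basis to the other to show $Q_{1}$ and $Q_{2}$ are equivalent, and then invoke Proposition 2.1. The only difference is that you spell out the rescaling step that produces an orthonormal basis from an orthogonal one, which the paper treats as known.
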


\begin{proof}  For every symmetric, positive  definite, bilinear form Q on $\br^{n}$ there exists a basis $\{v_{1}, v_{2}, ... , v_{n} \}$ of $\br^{n}$ such that $Q(v_{i} , v_{j}) = \delta_{ij}$ for $1 \leq i,j \leq n$.   Given symmetric, positive definite, bilinear forms $Q_{1}, Q_{2}$ on $\br^{n}$ choose orthonormal bases $\fB_{1}, \fB_{2}$ and a linear isomorphism T of $\br^{n}$ such that $T(\fB_{1}) = \fB_{2}$.  It follows that $Q_{1}(v,w) = Q_{2}(Tv,Tw)$ for all $v,w \in \br^{n}$.  Now apply Proposition 2.1.
\end{proof}

\section{Zero divisors and invertible elements}

\noindent In general zero divisors exist in $C\ell(V,Q)$, but only for a set of elements that lie in an F-algebraic variety in $C\ell(V,Q)$.
\newline

\noindent $\mathbf{Example}$  Let F be a field with characteristic $\neq 2$, and let $V = F^{3}$.  Let Q be the nondegenerate bilinear form on $F^{3}$ given by $Q(v,w) = v_{1} w_{1} + v_{2} w_{2} - v_{3} w_{3}$, where $v = (v_{1},v_{2},v_{3}) \in F^{3}$ and  $w = (w_{1},w_{2},w_{3}) \in F^{3}$.  If $x = e_{1} + e_{3}$, then $x^{2} = 0$.
\newline
 
 \noindent  We now give a more general description of the set $\fZ$ of zero divisors of $C\ell(V,Q)$. We then use this information to show that the set of  invertible elements  in $C\ell(V,Q)$ is $C\ell(V,Q) - \fZ$.
 \newline
 
 \noindent If a nonzero element a is a zero divisor, then either $0 = ax = L_{a}(x)$ for some nonzero element x or $0 = xa = R_{a}(x)$ for some nonzero element x of $C\ell(V,Q)$.  Define algebra homomorphisms $\varphi_{L} : C\ell(V,Q) \rightarrow F$ and $\varphi_{R} : C\ell(V,Q) \rightarrow F$  by $\varphi_{L}(a) = det~L_{a}$ and $\varphi_{R}(a) = det~R_{a}$.  We may write $\varphi_{L} = det \circ L$, where $L : C\ell(V,Q) \rightarrow End(C\ell(V,Q))$ is the algebra homomorphism give by $L(a) = L_{a}$.  The map $\varphi_{L}$ is a polynomial map since L is a linear map and det is a polynomial map.  The same argument shows that $\varphi_{R}$ is also a polynomial map .
\newline

\noindent  The discussion above shows that the set of zero divisors in $C\ell(V,Q)$ is the variety $\fZ = \varphi_{L}^{-1}(0) \cup  \varphi_{R}^{-1}(0)$.  Note that the maps $\varphi_{L}$ and $\varphi_{R}$ are not identically zero.  For example, if v is an element of V such that $Q(v,v) \neq 0$, then $L_{v^{2}} = R_{v^{2}} = - Q(v,v)~Id \neq 0$.  In particular $0 \neq \varphi_{L}(v^{2}) = \varphi_{R}(v^{2}) = (\varphi_{L}(v))^{2} = (\varphi_{R}(v))^{2}$.
\newline

\noindent An element a of $C\ell(V,Q)$ is $\mathit{invertible}$ if there exists an element z of $C\ell(V,Q)$ such that $a \cdot z = z \cdot a = 1$.  The set GL(V,Q)  of invertible elements  In $C\ell(V,Q)$ forms a group.  

\begin{proposition}  $GL(V,Q = C\ell(V,Q) - \fZ$, where $\fZ$ denotes the set of zero divisors in $C\ell(V,Q)$ .
\end{proposition}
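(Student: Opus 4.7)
The plan is to prove the two inclusions $GL(V,Q)\subseteq C\ell(V,Q)-\fZ$ and $C\ell(V,Q)-\fZ\subseteq GL(V,Q)$ separately, exploiting the polynomial maps $\varphi_L$ and $\varphi_R$ introduced just above the proposition.

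For the forward inclusion, suppose $a\in GL(V,Q)$, so there exists $z\in C\ell(V,Q)$ with $az=za=1$. Then $L_a L_z=L_z L_a=\mathrm{Id}$ and $R_a R_z=R_z R_a=\mathrm{Id}$, so $L_a$ and $R_a$ are invertible linear endomorphisms of $C\ell(V,Q)$. Thus $\varphi_L(a)=\det L_a\neq 0$ and $\varphi_R(a)=\det R_a\neq 0$, which means $a\notin\varphi_L^{-1}(0)\cup\varphi_R^{-1}(0)=\fZ$.

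For the reverse inclusion, suppose $a\in C\ell(V,Q)-\fZ$. Then $\varphi_L(a)\neq 0$ and $\varphi_R(a)\neq 0$, so $L_a$ and $R_a$ are invertible linear maps on the finite dimensional space $C\ell(V,Q)$. In particular $L_a$ is surjective, so there exists $z\in C\ell(V,Q)$ with $az=L_a(z)=1$, and similarly there exists $w\in C\ell(V,Q)$ with $wa=R_a(w)=1$. The standard associativity calculation $w=w\cdot 1=w(az)=(wa)z=1\cdot z=z$ shows $w=z$, hence $az=za=1$ and $a\in GL(V,Q)$.

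There is essentially no obstacle here; the only point that requires any care is that being outside $\fZ$ must supply both a right and a left one-sided inverse (one from $L_a$ being bijective, one from $R_a$ being bijective) before associativity can glue them into a genuine two-sided inverse. This is why the definition of $\fZ$ is written with the union $\varphi_L^{-1}(0)\cup\varphi_R^{-1}(0)$ rather than an intersection. The finite dimensionality of $C\ell(V,Q)$ (established in Proposition 1.3 as $2^{n}$) is used implicitly when we pass from $L_a,R_a$ having nonzero determinant to their surjectivity.
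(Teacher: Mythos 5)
Your proof is correct and follows essentially the same route as the paper: both directions hinge on the definitions of $\varphi_L,\varphi_R$ and finite-dimensional linear algebra relating nonzero determinant to bijectivity. The only cosmetic difference is in the inclusion $GL(V,Q)\subseteq C\ell(V,Q)-\fZ$, where you observe directly that $L_a$ and $R_a$ are invertible linear maps (via $L_aL_z=L_{az}=\mathrm{Id}$, etc.), whereas the paper reaches the same conclusion by a short contradiction involving a hypothetical nonzero element of the kernel; these are interchangeable.
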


\begin{proof}  If $\varphi_{L}(a) \neq 0$ for some element a of $C\ell(V,Q)$, then $L_{a}$ is surjective and there exists x $\in C\ell(V,Q)$ such that $1 = L_{a}(x) = a \cdot x$.  Similarly, if $\varphi_{R}(a) \neq 0$ for some element a of $C\ell(V,Q)$, then $R_{a}$ is surjective and there exists y $\in C\ell(V,Q)$ such that $1 = R_{a}(y) = y \cdot a$.   If $\varphi_{L}(a)$ and $\varphi_{R}(a)$  are both nonzero, then $x = y$ and a is invertible.  Hence $C\ell(V,Q) - \fZ \subset GL(V,Q)$.  Conversely, let $a \in C\ell(V,Q)$ be invertible, and let x $\in C\ell(V,Q)$ be an element such that $1 = a \cdot x = x \cdot a$.  If $\varphi_{L}(a) = 0$, then $ 0 = L_{a}(z) = a \cdot z$ for some nonzero element z of $C\ell(V,Q)$.  But then $0 = x \cdot (a \cdot z) = (x \cdot a) \cdot z = z$, a contradiction.  Hence $\varphi_{L}(a) \neq 0$, and a similar argument shows that $\varphi_{R}(a) \neq 0$.  This shows that $GL(V,Q) \subset C\ell(V,Q) - \fZ$.
\end{proof}

\noindent
 
\section{Canonical symmetric bilinear form}

\noindent  Let F be a field with characteristic $\neq 2$.  Let V be a finite dimensional vector space over F, and let $Q : V \times V \rightarrow F$ be a symmetric, nondegenerate, bilinear form.  If $T : V \rightarrow V$ is a linear transformation, then let the metric adjoint $T^{*} : V \rightarrow V$ be the unique linear transformation such that $Q(Tv,w) = Q(v,T^{*}w)$ for all v,w $\in V$.  The existence and uniqueness of $T^{*}$ follows from the nondegeneracy of Q.
\newline

\noindent Let $c = \alpha \circ \tau$ be the anti-automorphism of $C\ell(V,Q)$ defined above.  Let  $\overline{Q}$ be the nondegenerate, symmetric, bilinear form on $C\ell(V,Q)$ discussed in Proposition 1.3. For $x \in C\ell(V,Q)$ let  $L(x) : C\ell(V,Q) \rightarrow C\ell(V,Q)$ and $R(x) : C\ell(V,Q) \rightarrow C\ell(V,Q)$ denote left and right translation by x respectively.  Let $L(x)^{*}$ and $R(x)^{*}$ denote the metric adjoints of $L(x)$ and $R(x)$ respectively relative to $\overline{Q}$,
\newline

\begin{proposition}    $\overline{Q}$  satisfies the following properties : 

	1)  $\overline{Q}(1,1) = 1$.
	
	2)  $L(x)^{*} = L(c(x))$  \hspace{.3in} for all x $\in C\ell(V,Q)$ 
	
	3)  $R(x)^{*} = R(c(x))$  \hspace{.3in} for all x $\in C\ell(V,Q)$ 
		
	4)  $\overline{Q} = Q$ on V.
\newline
	
	Moreover, $\overline{Q}$ satisfies
	
	5)  $\overline{Q}(\alpha(x), \alpha(y)) = \overline{Q}(x,y)$ for all x,y $\in C\ell(V,Q)$.
	
	6)  $\overline{Q}(c(x) , c(y)) = \overline{Q}(x,y)$ for all x,y $\in C\ell(V,Q)$.

\end{proposition}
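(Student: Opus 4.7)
The plan is to reduce all six properties to two preliminary facts about the scalar projection $p \colon C\ell(V,Q) \to F$: a trace-like identity
$p(uv) = p(vu)$ for all $u,v \in C\ell(V,Q)$, and the invariance
$p \circ \alpha = p = p \circ c$.

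To establish the trace identity I will fix a $Q$-orthogonal basis $\{e_1, \dots, e_n\}$ of $V$ and work with the monomial basis $\fB^\prime$ from Proposition 1.3. By bilinearity it suffices to check $p(e_I e_J) = p(e_J e_I)$ for all $I, J \in \fI$. Since the $e_i$ anticommute pairwise and each $e_i^2 = -Q(e_i,e_i)\,1$, the product $e_I e_J$ reduces to a scalar multiple of some $e_K \in \fB^\prime$, and this scalar component is nonzero exactly when $K = \emptyset$; a short index count shows $K = \emptyset$ forces $I = J$ (in either multiplication order), and then $p(e_I e_J) = p(e_J e_I)$ is automatic. The invariance of $p$ under $\alpha$ and $c$ is immediate from the monomial action formulas displayed just before the statement: both maps send $e_I$ to $\pm e_{I^\prime}$ with $|I^\prime| = |I|$, so both preserve the scalar component.

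With these two tools in hand, the six properties fall out one by one. Property 1) is immediate from $c(1) = 1$. For 2) I expand the two sides as $\overline{Q}(xy,z) = p(xy \cdot c(z))$ and $\overline{Q}(y,c(x)z) = p(y \cdot c(z) \cdot x)$, using that $c$ is an involutive anti-automorphism, and then identify them via cyclicity. Property 3) is the analogous calculation for right translation and in fact does not even need cyclicity, since the rearrangement lands naturally on the right. For 4) I compute $\overline{Q}(v,w) = -p(vw)$ on $V$, combine the defining relation $vw + wv = -2Q(v,w)$ with cyclicity to conclude $2 p(vw) = -2 Q(v,w)$, and read off $\overline{Q}(v,w)=Q(v,w)$. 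Properties 5) and 6) follow by pushing $\alpha$ or $c$ across the product (using that $\alpha$ is an automorphism, $c$ an anti-automorphism, and $\alpha \circ c = c \circ \alpha$) and then invoking $p \circ \alpha = p$ or $p \circ c = p$; in 6) one also uses cyclicity once to swap $c(x)$ past $y$.

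The only step with any genuine content is the cyclicity of $p$; everything after it is symbol-pushing governed by $\alpha^2 = c^2 = \mathrm{Id}$ and the fact that $c$ reverses the order of products. The main thing to be careful with is the combinatorial bookkeeping for why $p(e_I e_J) = 0$ whenever $I \neq J$, and once that is organised cleanly the remainder of the proof is routine.
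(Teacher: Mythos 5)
Your proposal is correct and follows essentially the same route as the paper: establish the cyclicity $p(uv)=p(vu)$ via the monomial basis, establish $p\circ\alpha = p = p\circ c$ from the action on monomials, and then derive all six properties by pushing $c$ and $\alpha$ across products and invoking those two facts. The only small deviation is in property 4), where you obtain $\overline{Q}(v,w)=Q(v,w)$ directly from the Clifford relation $vw+wv=-2Q(v,w)$ together with cyclicity, whereas the paper first checks $\overline{Q}(v,v)=Q(v,v)$ and then invokes polarization; these are interchangeable one-line variants of the same idea.
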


\begin{proof}  We need some preliminary results and notation.  
\newline

\noindent Recall from Proposition 1.3 that $\fB' = \{1, e_{I} : I \in \fI  \}$ is a basis of $C\ell(V,Q)$.  For an element x of $C\ell(V,Q)$ we write $x = a + \sum_{I} x_{I} e_{I}$, where the sum is over all multi-indices $I \in \fI$, and $a = p(x), x_{I}$ are elements of F.

\begin{lemma}  Let $I \in \fI$ and $J \in \fI$ be given.  Then

	1) $0 \neq e_{I}^{2} = (-1)^{\frac{k(k-1)}{2}}~e_{i_{1}}^{2} \cdot e_{i_{2}}^{2} \cdot ...\cdot e_{i_{k}}^{2} \in F$.
	
	2)  $e_{I} \cdot e_{J} = \pm e_{J} \cdot e_{I} = \lambda_{IJ} e_{K}$, where $K = (I \cup J) - (I \cap J)$ if $I \neq J$ and $0 \neq \lambda_{IJ} \in F$.
\end{lemma}

\begin{proof}  Assertion 1) follows from induction on $|I|$.  Assertion 2) follows from assertion 1) and induction on $|I| + |J|$.
\end{proof}

\begin{lemma}  1)  $p(xy) = p(yx)$ for all x,y $\in C\ell(V,Q)$. 

\hspace{.7in} 2) $p(\alpha(x)) = p(x) = p(c(x)) $ for all x $\in C\ell(V,Q)$.
\end{lemma}  

\begin{proof}  We prove 1). Let x,y be elements of $C\ell(V,Q)$.  Write  $x = a + \sum_{I \in \fI} a_{I} e_{I}$ and  $y = b + \sum_{J \in \fI} b_{J} e_{J}$, where $a,b,a_{I}, b_{J} \in F$. Then $p(xy) = p(ab + \sum_{I,J \in \fI} a_{I} b_{J} (e_{I} \cdot e_{J}) + b~\sum_{I \in \fI} a_{I} e_{J} + a~\sum_{J \in \fI} b_{J} e_{J})$.  By Lemma 4.2 we have  $e_{I} e_{J} = \lambda_{IJ} e_{K}$, where $K = (I \cup J) - (I \cap J)$ and $\lambda_{IJ}$ is a nonzero element of F.  By inspection K is nonempty unless $I = J$.  If $I = J$, then $e_{I} e_{J} = e_{I} e_{I} = \epsilon_{I}$, where $0 \neq \epsilon_{I} \in F$ by Lemma 4.2. Hence $p(xy) = ab + \sum_{I} a_{I} b_{I} \epsilon_{I}$, where the sum is over all multi-indices $I \in \fI $.  This expression is symmetric in the components of x and y, which proves 1). 
\newline 

\noindent We prove 2)  Let x $\in C\ell(V,Q)$ be given, and write $x = a + \sum_{I} a_{I} e_{I}$  as in the proof of 1).  Both statements in 2) are now an immediate consequence of the fact that  the mappings $\alpha$ and c fix the elements of F and take $e_{I}$ into  $\pm e_{I}$ for every multi-index I.  
\end{proof}

\noindent  We are now ready to prove Proposition 4.1.  Assertion 1) is obvious.  We prove 2).   For elements x,y,z of $C\ell(V,Q)$ we have $\overline{Q}(L(x)y,z) = p(x \cdot y \cdot c(z))$.  On the other hand $\overline{Q}(y, L(c(x))z) = \overline{Q}(y, c(x) \cdot z) = p(y \cdot c(z) \cdot x) = p(x \cdot y \cdot c(z)) = \overline{Q}(L(x)y,z)$.  Hence $L(x)^{*} = L(c(x))$.  A similar argument proves 3).
\newline

\noindent  We prove 4).  If $v \in V$, then $\overline{Q}(v,v) = p(v \cdot c(v)) = - p(v \cdot v) =  - p(- Q(v,v)1) = Q(v,v)$.  A standard polarization argument now shows that $\overline{Q}(v,w) = Q(v,w)$ for all $v,w \in V$.
\newline

\noindent  We prove 5).  For elements $x,y \in C\ell(V,Q)$ we have $\overline{Q}(\alpha(x), \alpha(y)) = p(\alpha(x) \cdot c(\alpha(y))) = p(\alpha(x) \cdot \alpha(c(y)) ) = p(\alpha(x \cdot c(y))) = p(x \cdot  c(y)) = \overline{Q}(x,y)$.
\newline

\noindent We prove 6).  For elements $x,y \in C\ell(V,Q)$ we have  $\overline{Q}(c(x), c(y)) = p(c(x) \cdot y) = p(c(c(x) \cdot y)) = p(c(y) \cdot x) = p(x \cdot c(y)) = \overline{Q}(x,y)$.
\end{proof}

\noindent  We now prove a converse to Proposition 4.1.

\begin{proposition}  Let $\tilde{Q} : C\ell(V,Q) \times C\ell(V,Q) \rightarrow F$ be a symmetric bilinear form that satisfies properties 1),2),3) and 4) of Proposition 4.1.  Then $\tilde{Q} = \overline{Q}$
\end{proposition}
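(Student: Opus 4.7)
The strategy is to reduce $\tilde Q = \overline Q$ to the equality of the linear functionals $\phi(z) := \tilde Q(1,z)$ and $p$. Iterating property 2) on products $v_1 \cdots v_k$ (which span $C\ell(V,Q)$ by Corollary 1.2(2)) peels factors off the left argument:
\[
\tilde Q(v_1 v_2 \cdots v_k, y) = \tilde Q(v_2 \cdots v_k, c(v_1) y) = \cdots = \tilde Q(1, c(v_1 \cdots v_k)\, y).
\]
Extending bilinearly yields $\tilde Q(x,y) = \phi(c(x)\,y)$, and the analogous identity with $p$ in place of $\phi$ holds for $\overline Q$ by the same derivation from Proposition 4.1(2). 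So it suffices to prove $\phi = p$ on the basis $\{1\}\cup\{e_I:I\in\fI\}$ of Proposition 1.3, where $\{e_1,\ldots,e_n\}$ is any $Q$-orthogonal basis of $V$.

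Property 1) gives $\phi(1) = 1 = p(1)$. For $|I|=1$, symmetry of $\tilde Q$ together with property 2) at $z=1$ yields $\phi(e_i) = \tilde Q(e_i,1) = \tilde Q(1, c(e_i)) = -\phi(e_i)$, so $\phi(e_i)=0$. For $|I|=2$ with $i\neq j$, property 4) combined with the reduction gives $\phi(e_i e_j) = -\tilde Q(e_i,e_j) = -Q(e_i,e_j) = 0$, matching $p(e_i e_j)$; the case $i=j$ is handled directly since $e_i^2$ is scalar. For $|I|=l\geq 3$, I would derive two symmetries of $\phi$ from the hypotheses: a cyclic property $\phi(xy)=\phi(yx)$ (by comparing properties 2) and 3) at $z=1$) and a $c$-invariance $\phi\circ c = \phi$ (via symmetry of $\tilde Q$ and property 2)). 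A one-step cyclic shift of $e_{i_1}\cdots e_{i_l}$ together with the $Q$-orthogonal anticommutation $e_ie_j = -e_je_i$ ($i\neq j$) gives $\phi(e_I) = (-1)^{l-1}\phi(e_I)$, killing $\phi(e_I)$ when $l$ is even; the $c$-invariance yields $\phi(e_I) = (-1)^{l(l+1)/2}\phi(e_I)$, additionally handling $l\equiv 1\pmod 4$.

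The principal obstacle is the residual class $l\equiv 3\pmod 4$, where both of the above cancellations trivialize. I expect this to be the technical heart of the argument; to close it I would set up an induction on $|I|$ that peels indices off the left and right arguments simultaneously using properties 2) and 3), exploiting reductions such as $\tilde Q(e_{i_1}e_{i_2}e_{i_3}, e_J) = -\tilde Q(e_{i_2}e_{i_3}, e_{i_1}e_J)$ to express the remaining evaluations as $\phi$ of strictly shorter monomials, where the earlier cases already force vanishing. Once $\phi = p$ is established on the full basis, the reduction of the first paragraph upgrades it to $\tilde Q = \overline Q$ on $C\ell(V,Q)$.
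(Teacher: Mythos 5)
Your strategy is genuinely different from the paper's. The paper introduces a linear map $S$ on $C\ell(V,Q)$ by $\tilde Q(x,y)=\overline Q(Sx,y)$, shows from properties 2)–3) that $S$ commutes with all left and right translations, deduces $S(x)=\lambda x$ for a central element $\lambda=S(1)$, and then tries to pin down $\lambda=1$ from property 4). You instead reduce to the linear functional $\phi(z)=\tilde Q(1,z)$, derive $\tilde Q(x,y)=\phi(c(x)y)$, and check $\phi=p$ on the $\overline Q$-orthogonal basis $\{1\}\cup\{e_I\}$. The two are related — $\phi=p$ is exactly $\lambda=1$ — but yours is a basis-by-basis computation rather than a multiplier argument. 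Your reduction and your treatment of $|I|$ even and $|I|\equiv 1,2\pmod 4$ are correct.

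The $|I|\equiv 3\pmod 4$ case, however, is a real gap, and the sketch you give for it does not close. The peeling step
$\tilde Q(e_{i_1}e_{i_2}e_{i_3},e_J)= -\tilde Q(e_{i_2}e_{i_3},e_{i_1}e_J)$ simply rewrites $\phi$ of the same (or a longer) monomial: it is circular, not an induction on length. There is a cheap fix for $|I|<n$ you should use instead: pick $m\notin I$; then $e_m e_I e_m^{-1}=(-1)^{|I|}e_I$, and your cyclic identity $\phi(xy)=\phi(yx)$ gives $\phi(e_I)=\phi(e_m\cdot e_I e_m^{-1})=\phi(e_I e_m^{-1}\cdot e_m)=\phi(e_I)$ while also $\phi(e_m e_I e_m^{-1})=(-1)^{|I|}\phi(e_I)$, so $\phi(e_I)=0$ for odd $|I|<n$. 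This, combined with what you already have, settles every $e_I$ except possibly $e_I=\omega=e_1\cdots e_n$. There the method genuinely breaks, and for good reason: when $n\equiv 3\pmod 4$ the proposition as stated is false. In that case $\omega$ is central and $c(\omega)=\omega$, so $\tilde Q(x,y):=\overline Q\bigl((1+\mu\omega)x,\,y\bigr)$ is bilinear, symmetric, and satisfies 1)–4) for any scalar $\mu$, yet $\tilde Q(1,\omega)=\mu\,\omega^2\neq 0=\overline Q(1,\omega)$ (and one may pick $\mu$ so that $1+\mu\omega$ is invertible, hence $\tilde Q$ is even nondegenerate). This also exposes a gap in the paper's own Lemma 4.7: from $\overline Q(\lambda v-v,w)=0$ for all $v,w\in V$ one may only conclude that $\lambda v-v$ is $\overline Q$-orthogonal to $V$, not that it vanishes; when $n$ is odd and $\lambda=1+b\omega$ one has $\lambda v-v=b\,\omega v\perp V$ regardless of $b$. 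So the top-degree obstruction you correctly identified is not a failure of your method but of the statement; a correct formulation must add a further hypothesis (e.g.\ property 5) of Proposition 4.1, which immediately gives $\phi(e_I)=(-1)^{|I|}\phi(e_I)$ and hence kills every odd $|I|$ including $\omega$).
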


  \noindent Let $\overline{Q}$ be the nondegenerate, symmetric bilinear form defined on $C\ell(V,Q)$ above.  Let $\tilde{Q}$ be another symmetric, bilinear form on $C\ell(V,Q)$ that satisfies properties  1), 2), 3) and 4).  Since $\overline{Q}$ is nondegenerate on $C\ell(V,Q)$ there exists a linear transformation $S : C\ell(V,Q) \rightarrow C\ell(V,Q)$ such that $\tilde{Q}(x,y) = \overline{Q}(S(x),y)$ for all x,y $\in C\ell(V,Q)$.  It is easy to see that S is symmetric with respect to both $\overline{Q}$ and $\tilde{Q}$.
\newline

\begin{lemma}  $x \cdot S(y) = S(x \cdot y) = S(x) \cdot y$ for all x,y $\in C\ell(V,Q)$.
\end{lemma}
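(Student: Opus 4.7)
The plan is to use that both $\tilde{Q}$ and $\overline{Q}$ satisfy the adjoint identity $L(x)^{*} = L(c(x))$ (property 2) and $R(x)^{*} = R(c(x))$ (property 3), and then transfer the two different ways of expressing $\tilde{Q}(x \cdot y, z)$ across the operator $S$, using nondegeneracy of $\overline{Q}$ at the end.

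Concretely, I would proceed as follows. Fix $x,y,z \in C\ell(V,Q)$. On the one hand, by the definition of $S$,
\[
\tilde{Q}(x \cdot y, z) \;=\; \overline{Q}(S(x \cdot y), z).
\]
On the other hand, using that $\tilde{Q}$ itself satisfies property 2 of Proposition 4.1,
\[
\tilde{Q}(x \cdot y, z) \;=\; \tilde{Q}(L(x)y, z) \;=\; \tilde{Q}(y, L(c(x))z) \;=\; \tilde{Q}(y, c(x) \cdot z),
\]
and converting back via $S$ and then applying property 2 for $\overline{Q}$,
\[
\tilde{Q}(y, c(x) \cdot z) \;=\; \overline{Q}(S(y), c(x) \cdot z) \;=\; \overline{Q}(L(c(x))^{*} S(y), z) \;=\; \overline{Q}(x \cdot S(y), z),
\]
where the last equality uses $L(c(x))^{*} = L(c(c(x))) = L(x)$ because $c^{2} = \mathrm{Id}$. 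Comparing the two expressions gives $\overline{Q}(S(x \cdot y) - x \cdot S(y), z) = 0$ for all $z$, so nondegeneracy of $\overline{Q}$ yields $S(x \cdot y) = x \cdot S(y)$.

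The identity $S(x \cdot y) = S(x) \cdot y$ is obtained by the same argument with $L$ replaced by $R$: write $x \cdot y = R(y)x$, apply property 3 for $\tilde{Q}$ to push $y$ to the other side as $R(c(y))z = z \cdot c(y)$, insert $S$, then apply property 3 for $\overline{Q}$ to move the factor back, again using $c^{2} = \mathrm{Id}$. Nondegeneracy of $\overline{Q}$ finishes the argument, and chaining the two equalities gives the lemma.

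There is no serious obstacle here: the essential input is that $c$ is an involution independent of the chosen bilinear form, so the adjoint identities valid for $\tilde{Q}$ and for $\overline{Q}$ share the same anti-automorphism and can be composed cleanly. The only thing to be careful about is that $S$ is symmetric relative to $\overline{Q}$ (which was noted just before the lemma) is not actually needed for this computation; the argument uses only the definition of $S$ plus properties 2) and 3) on both sides.
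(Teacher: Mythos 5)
Your proof is correct and follows essentially the same line as the paper's: express $x\cdot y$ as $L(x)y$ (resp.\ $R(y)x$), move the factor across using the adjoint identity for $\tilde{Q}$, convert to $\overline{Q}$ via the definition of $S$, move the factor back using the adjoint identity for $\overline{Q}$, and finish by nondegeneracy. The only cosmetic difference is that you invoke $L(c(x))^{*}=L(x)$ via $c^{2}=\mathrm{Id}$, whereas the paper reads the adjoint identity $\overline{Q}(L(x)u,v)=\overline{Q}(u,L(c(x))v)$ directly from right to left; these are the same step.
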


\begin{proof}  Let elements x,y,z $\in C\ell(V,Q)$ be given.  Since $\tilde{Q}$ satisfies property 3) it follows that $\overline{Q}(S(x \cdot y),z) = \tilde{Q}(x \cdot y,z) = \tilde{Q}(R(y)x,z) = \tilde{Q}(x, R(c(y))z) = \overline{Q}(S(x),R(c(y))z) = \overline{Q}(R(y)S(x),z) = \overline{Q}(S(x) \cdot y,z)$.  Since $\overline{Q}$ is nondegenerate we conclude that 

	1)  $S(x \cdot y) = S(x) \cdot y$ for all x,y $\in C\ell(V,Q)$
	
\noindent Similarly, since $\tilde{Q}$ satisfies property 2) we conclude that $\overline{Q}(S(x \cdot y),z) = \tilde{Q}(x \cdot y,z) = \tilde{Q}(L(x)y,z) = \tilde{Q}(y, L(c(x)) z) = \overline{Q}(S(y), L(c(x))z) = \overline{Q}(L(x)S(y),z) = \overline{Q}(x \cdot S(y),z)$.  From the nondegeneracy of $\overline{Q}$ we conclude

	2)  $S(x \cdot y) = x \cdot S(y)$ for all x,y $\in C\ell(V,Q)$
	
\noindent  This completes the proof of the lemma.
\end{proof}

\begin{lemma}  If $\lambda = S(1)$, then $\lambda$ lies in the center of $C\ell(V,Q)$, and $S(x) = \lambda x = x \lambda$ for all x $\in C\ell(V,Q)$.
\end{lemma}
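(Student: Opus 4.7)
The plan is to derive both conclusions directly from the previous lemma by specialization. That lemma gives us the two identities
\[
x \cdot S(y) \;=\; S(x\cdot y) \;=\; S(x)\cdot y \qquad \text{for all } x,y \in C\ell(V,Q).
\]
Setting $y = 1$ in the equality $x \cdot S(y) = S(x \cdot y)$ yields $x \cdot \lambda = S(x)$, while setting $x = 1$ in the equality $S(x \cdot y) = S(x) \cdot y$ yields $S(y) = \lambda \cdot y$. Together these give $S(x) = \lambda x = x \lambda$ for every $x \in C\ell(V,Q)$.

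Finally, the resulting equation $\lambda x = x \lambda$ holding for every $x$ in $C\ell(V,Q)$ is exactly the statement that $\lambda$ belongs to the center of $C\ell(V,Q)$.

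There is no real obstacle here; the content of the lemma is purely a bookkeeping consequence of the preceding result, and no further use is made of the properties of $\overline{Q}$, $\tilde{Q}$, or the anti-automorphism $c$.
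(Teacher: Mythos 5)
Your proof is correct and follows exactly the same route as the paper: specialize the identities $S(x\cdot y) = S(x)\cdot y$ at $x=1$ and $S(x\cdot y) = x\cdot S(y)$ at $y=1$ to get $S(y)=\lambda y$ and $S(x)=x\lambda$, from which centrality of $\lambda$ is immediate. Nothing further to add.
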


\begin{proof}  We note that $S(y) = \lambda y$ for all y $\in C\ell(V,Q)$ by substituting $x=1$ in equation 1) above.  Similarly we observe that $S(x) = x \lambda$ for all x $\in C\ell(V,Q)$ by substituting $y=1$ in equation 2) above.
\end{proof}

\begin{lemma} $\lambda v = v$ for all $v \in V$.
\end{lemma}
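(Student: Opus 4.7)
The plan is to combine axiom~4 of Proposition~4.1 with the structural results of Lemmas~4.4 and~4.5 to pin down $\lambda$, and then conclude $\lambda v=v$. By axiom~4 applied to $\tilde{Q}$ and property~4 of Proposition~4.1 applied to $\overline{Q}$, we have $\tilde{Q}(v,w)=Q(v,w)=\overline{Q}(v,w)$ for all $v,w\in V$. Since Lemma~4.5 gives $\tilde{Q}(x,y)=\overline{Q}(\lambda x,y)$, it follows that
\[
\overline{Q}(\lambda v-v,\ w)=0\qquad\text{for all }v,w\in V,
\]
so $\lambda v-v$ lies in the $\overline{Q}$-orthogonal complement of $V$ in $C\ell(V,Q)$.

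To pass from this to $\lambda v-v=0$, I would show $\lambda=1$. Three constraints on $\lambda$ are available: $\lambda$ is central in $C\ell(V,Q)$ (Lemma~4.5); $p(\lambda)=\tilde{Q}(1,1)=1$ (axiom~1, via $\tilde{Q}(1,1)=\overline{Q}(\lambda,1)=p(\lambda)$); and the symmetry of $\tilde{Q}$ gives $c(\lambda)=\lambda$, via the chain
\[
\overline{Q}(\lambda x,y)=\tilde{Q}(x,y)=\tilde{Q}(y,x)=\overline{Q}(\lambda y,x)=\overline{Q}(x,\lambda y)=\overline{Q}(c(\lambda)x,y)
\]
(the last step using property~2 of Proposition~4.1 for $\overline{Q}$) together with nondegeneracy of $\overline{Q}$. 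When $\dim V$ is even, $Z(C\ell(V,Q))=F$ and $p(\lambda)=1$ alone forces $\lambda=1$. When $\dim V$ is odd, $Z(C\ell(V,Q))=F\oplus F\omega$ with $\omega=e_1\cdots e_n$ and $c(\omega)=(-1)^{n(n+1)/2}\omega$; the condition $c(\lambda)=\lambda$ kills any $\omega$-component in the subcase $n\equiv 1\pmod 4$, again giving $\lambda=1$.

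The main obstacle is the residual case $\dim V\equiv 3\pmod 4$, where $c(\omega)=\omega$ and the preceding constraints only narrow $\lambda$ to the form $1+b\omega$ for some $b\in F$. To close this case I would evaluate $\tilde{Q}(v,yz)$ for a judiciously chosen triple $v,y,z\in V$, using axioms~2 and~3 to reduce it to values of the trace-like functional $\tilde{Q}(1,\cdot)$ on products of at most two vectors (which axiom~4 pins down via $\tilde{Q}(1,vw)=-Q(v,w)$), and then compare with the expansion $\overline{Q}(\lambda v,yz)=\overline{Q}(v,yz)+b\,\overline{Q}(\omega v,yz)$. Choosing the triple so that $\omega\cdot yzv$ has nonzero scalar part forces $b=0$, and $\lambda v=v$ follows at once.
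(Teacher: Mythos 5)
Your diagnosis is sharper than the paper's own argument: the published proof derives $\overline{Q}(\lambda v - v, w) = 0$ for all $v, w \in V$ and then simply invokes ``nondegeneracy of $\overline{Q}$,'' overlooking exactly the point you raise, namely that $\lambda v - v$ lives in $C\ell(V,Q)$ rather than in $V$, so this only places $\lambda v - v$ in the $\overline{Q}$-orthogonal complement of $V$. Your three auxiliary constraints (centrality, $p(\lambda)=1$, $c(\lambda)=\lambda$) are all correctly derived, and they do dispose of the cases $\dim V$ even and $\dim V\equiv 1\pmod 4$.

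The gap is precisely the residual case you flag, and it cannot be closed by the route you sketch. Axioms $2)$ and $3)$ only transfer a vector between the two arguments of $\tilde{Q}$; they never reduce the total number of Clifford factors. From $\tilde{Q}(v,yz)$ one obtains only permuted and sign-changed variants such as $\tilde{Q}(y,zv)$ and $-\tilde{Q}(yv,z)$, never a value of $\tilde{Q}(1,\cdot)$ on a product of two vectors. Moreover, for $\dim V = n\geq 7$ the element $\omega\cdot yzv$ has no degree-zero component at all, since $\omega$ has degree $n$ while $yzv$ has degree at most $3$. In fact the lemma (and Proposition 4.4) is \emph{false} when $n\equiv 3\pmod 4$: for any $b\in F$ put $\lambda = 1 + b\omega$. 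This $\lambda$ is central, $p(\lambda)=1$, and $c(\lambda)=\lambda$ because $c(\omega)=\omega$; one checks directly that $\tilde{Q}(x,y):=\overline{Q}(\lambda x, y)$ is symmetric and satisfies properties $1)$--$4)$ of Proposition 4.1 --- in particular $\tilde{Q}=Q$ on $V$, since $\omega\cdot vw$ has components only in degrees $n$ and $n-2$, both positive for $n\geq 3$. So the characterization needs an additional hypothesis such as $\alpha$-invariance (property $5)$), which forces $\alpha(\lambda)=\lambda$ and hence $b=0$ because $\alpha(\omega)=-\omega$ when $n$ is odd; without it, no proof of the lemma as stated is possible.
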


\begin{proof}  Let v,w be elements of V.  Then by property  4) and Lemma 4.7 $\overline{Q}(v,w) = Q(v,w) = \tilde{Q}(v,w) = \overline{Q}(\lambda v,w)$, and hence $0 = \overline{Q}(\lambda v - v,w)$ for all $v,w \in V$.  The assertion of the lemma now follows from the nondegeneracy of $\overline{Q}$.
\end{proof}

\noindent  We now complete the proof of Proposition 4.4.  Let v be an element of V such that $Q(v,v) \neq 0$.  By the previous lemma it follows that $\lambda~Q(v,v) = - \lambda~v \cdot v = - v \cdot v = Q(v,v) 1$.  Hence $\lambda = 1$ and $\overline{Q} = \tilde{Q}$ since S is the identity by Lemma 4.6.

\section{Centers of the Clifford algebras}

\begin{proposition} Let Q be a nondegenerate, symmetric, bilinear form on a vector space V of dimension n over a field F of characteristic $\neq 2$. Let  $C\ell(V,Q)$ denote the corresponding Clifford algebra, and let  $\fZ(V,Q)$ denote the center of $C\ell(V,Q)$.  Let $\{ e_{1}, ... , e_{n}\}$ denote a Q-orthogonal basis of V and let $\omega = e_{1} \cdot ... \cdot e_{n}$.  Then

	1) If n is even, then
	
\hspace{.7in} 	a) $ \varphi \omega = \omega \alpha(\varphi)$ for all $\varphi \in C\ell(V,Q)$.
	
\hspace{.7in}	b)  $\fZ(V,Q) = F$.
	
	2)  If n is odd, then 
	
\hspace{.7in}	$\fZ(V,Q) = F-\rm{span} \{1,\omega\}$.
	
	3) $\omega^{2} = (-1)^\frac{n(n-1)}{2} e_{1}^{2} \cdot ... \cdot e_{n}^{2}$
	
\end{proposition}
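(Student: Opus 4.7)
The plan is to handle (3) first as a self-contained calculation, then use its one-generator analogue to obtain (1a), and finally derive (1b) and (2) from a basis-level commutation analysis using Proposition 1.3.

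For (3), I would compute $\omega \cdot \tau(\omega)$ in two ways. From the monomial formula for $\tau$ given earlier, $\tau(\omega) = e_n \cdots e_1$, so $\omega \cdot \tau(\omega) = e_1 \cdots e_n \cdot e_n \cdots e_1$; the two middle $e_n$'s collapse to the scalar $e_n^2 \in F$ (Lemma 4.2(1)), which then moves out past the remaining factors, exposing a new pair $e_{n-1} \cdot e_{n-1}$, and iteration yields $\omega \cdot \tau(\omega) = e_1^2 \cdots e_n^2$. On the other hand, distinct $e_i, e_j$ anti-commute by Corollary 1.2(1), so reversing the order $(1, \ldots, n)$ to $(n, \ldots, 1)$ costs one sign per inversion, giving $\tau(\omega) = (-1)^{n(n-1)/2} \omega$. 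Combining these yields $\omega^2 = (-1)^{n(n-1)/2} e_1^2 \cdots e_n^2$.

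For (1a), since $\alpha$ is an algebra endomorphism and the $e_j$ generate $C\ell(V,Q)$ (Corollary 1.2(2)), it suffices to verify $e_j \omega = -\omega e_j$ for each single generator; multiplicativity then yields the full identity $(e_{j_1} \cdots e_{j_r})\omega = \omega \, \alpha(e_{j_1} \cdots e_{j_r})$ by induction on monomial length. The single-generator case is a sign count: sliding $e_j$ from the left end of $\omega$ into the $j$th slot costs $(-1)^{j-1}$, while sliding it from the right end into the same slot costs $(-1)^{n-j}$, both producing $e_1 \cdots e_{j-1} e_j^2 e_{j+1} \cdots e_n$. Hence $e_j \omega = (-1)^{n-1} \omega e_j$, and the prefactor is $-1$ exactly when $n$ is even.

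For (1b) and (2), I would expand an arbitrary element $\varphi = a + \sum_{I \in \fI} a_I e_I$ in the basis $\fB'$ and test commutation with each generator $e_j$. The same shuffling count as in the previous paragraph shows $e_j e_I = \epsilon_{j,I} \, e_I e_j$, where $\epsilon_{j,I} = (-1)^{|I|}$ if $j \notin I$ and $\epsilon_{j,I} = (-1)^{|I|+1}$ if $j \in I$. Since the products $e_I e_j$ for distinct $I$'s are nonzero scalar multiples of the distinct basis elements $e_{I \triangle \{j\}}$ (Lemma 4.2(2)), centrality of $\varphi$ forces $\epsilon_{j,I} = +1$ for every $j$ and every $I$ with $a_I \neq 0$. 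When $0 < |I| < n$, both cases of $\epsilon_{j,I}$ are realised for suitable choices of $j$, so the two parity requirements $|I|$ even and $|I|$ odd cannot be met simultaneously. The only surviving options are $I = \emptyset$ and $I = \{1, \ldots, n\}$, with the latter requiring $(-1)^{n+1} = 1$, i.e., $n$ odd. This gives (1b), and the $n$-odd specialisation of the single-generator identity in the previous paragraph confirms that $\omega$ really does commute with every $e_j$, hence with all of $C\ell(V,Q)$, establishing (2). The main obstacle is just keeping the two closely related sign counts straight; once the shuffling conventions are fixed, everything else is bookkeeping.
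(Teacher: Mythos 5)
Your proposal is correct, and it departs from the paper's route in two interesting places.

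For part (3) the paper simply appeals to ``routine induction on $n$.'' Your two-sided evaluation of $\omega\cdot\tau(\omega)$---telescoping the middle $e_n e_n, e_{n-1}e_{n-1},\dots$ to get $e_1^2\cdots e_n^2$, versus the inversion count $\tau(\omega) = (-1)^{n(n-1)/2}\omega$---is a cleaner, closed-form derivation that avoids induction entirely; it also makes the parity of the exponent self-explanatory rather than an artifact of the inductive step.

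For parts (1b) and (2) the paper factors the argument through its Lemma 5.2, which uses the nondegeneracy and orthogonality of $\overline{Q}$ on the monomial basis to conclude that if $\xi$ is central and $\xi_I \neq 0$ then $e_I$ itself is central. Your argument reaches the same reduction without $\overline{Q}$ at all: you observe that $e_j e_I - e_I e_J = (\epsilon_{j,I}-1)\,e_I e_j$ with $e_I e_j$ a nonzero multiple of $e_{I\triangle\{j\}}$, and that $I\mapsto I\triangle\{j\}$ is a bijection, so the terms are independent and the coefficient must vanish index-by-index. This is more elementary, since it only uses linear independence of the basis $\fB'$ (Proposition 1.3(2)) rather than the bilinear form machinery; what the $\overline{Q}$ argument buys in the paper is reuse of apparatus already established in Section 4, whereas your version is self-contained within Section 1. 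The subsequent case analysis on $0 < |I| < n$, using $j\in I$ to force $|I|$ odd and $j\notin I$ to force $|I|$ even, is exactly the content of the paper's Lemma 5.3, and your derivation of $\epsilon_{j,I}$ (namely $(-1)^{|I|}$ for $j\notin I$ and $(-1)^{|I|+1}$ for $j\in I$) is correct.

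Part (1a) matches the paper: verify $e_j\omega = (-1)^{n-1}\omega e_j$ by a sign count on the generators and extend multiplicatively; the paper phrases the extension via the subalgebra $\fA = \{\varphi : \varphi\omega = \omega\alpha(\varphi)\}$ and Corollary 1.2(2), while you phrase it as induction on monomial length, but these are the same argument.
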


\noindent  We begin with some preliminary results.

\begin{lemma}  Let $\xi \in \fZ(V,Q)$ and write $\xi = \xi_{0} + \sum_{I \in \frak{I}} \xi_{I} e_{I}$, where $\xi_{0} \in F$ and $\xi_{I} \in F$ for all I.  Then $e_{I} \in \fZ(V,Q)$ if $\xi_{I} \neq 0$.
\end{lemma}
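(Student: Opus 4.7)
The plan is to use centrality of $\xi$ against each generator $e_j$ separately, and isolate the condition on each coefficient $\xi_I$ by a linear independence argument based on Lemma 4.2. Since $V$ generates $C\ell(V,Q)$ as an algebra, an element of $C\ell(V,Q)$ lies in the center $\fZ(V,Q)$ if and only if it commutes with each basis vector $e_j$, so it suffices to show that for every $I$ with $\xi_I \neq 0$ and every $j \in \{1,\dots,n\}$ the element $e_j$ commutes with $e_I$.

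First I would fix an arbitrary $j \in \{1,\dots,n\}$. By Lemma 4.2, for each multi-index $I$ the product $e_j e_I$ equals either $+e_I e_j$ or $-e_I e_j$, depending only on the parity of $|I \setminus \{j\}|$. Writing
\[
0 \;=\; e_j \xi - \xi e_j \;=\; \sum_{I \in \fI} \xi_I \bigl(e_j e_I - e_I e_j\bigr),
\]
the terms corresponding to multi-indices $I$ with which $e_j$ commutes drop out, and each remaining term equals $-2\, \xi_I\, e_I e_j$. Denoting by $\fA_j \subset \fI$ the set of multi-indices $I$ for which $e_j$ anticommutes with $e_I$, we obtain
\[
\sum_{I \in \fA_j} \xi_I\, e_I e_j \;=\; 0.
\]

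Next I would prove linear independence of $\{ e_I e_j : I \in \fA_j\}$. By Lemma 4.2(2), for each such $I$ we have $e_I e_j = \mu_{I,j}\, e_{K(I)}$ with $0 \neq \mu_{I,j} \in F$ and $K(I) = I \triangle \{j\}$ (or $K(I) = \emptyset$, in which case $e_{K(I)} = 1$). The map $I \mapsto I \triangle \{j\}$ is a bijection on subsets of $\{1,\dots,n\}$, so distinct $I \in \fA_j$ produce distinct basis elements $e_{K(I)}$ of $\fB'$. Since $\fB'$ is a basis of $C\ell(V,Q)$ by Proposition 1.3, linear independence follows, and we conclude $\xi_I = 0$ for every $I \in \fA_j$.

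Thus whenever $\xi_I \neq 0$, the index $I$ fails to lie in $\fA_j$ for every $j$, meaning $e_j e_I = e_I e_j$ for all $j = 1,\dots,n$. Since $\{e_1,\dots,e_n\}$ generates $C\ell(V,Q)$ as an algebra (Corollary 1.2(2)), this forces $e_I \in \fZ(V,Q)$. The main obstacle in this argument is the linear independence step, but it reduces to the observation that the symmetric-difference action of $\{j\}$ is a bijection and that $\fB'$ is a basis — so the potential difficulty is bookkeeping, not a substantive obstruction.
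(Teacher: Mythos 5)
Your proof is correct and follows essentially the same route as the paper's: you fix a generator $e_j$, note via Lemma 4.2 that each commutator $e_j e_I - e_I e_j$ is either zero or a nonzero multiple of the basis element $e_{I\,\triangle\,\{j\}}$, and use injectivity of $I \mapsto I\,\triangle\,\{j\}$ to isolate each coefficient $\xi_I$ from the identity $e_j\xi = \xi e_j$. The paper carries out this same isolation by pairing $e_k\xi - \xi e_k$ against $e_{I_k}$ via $\overline{Q}$ and invoking the $\overline{Q}$-orthogonality of $\fB'$ rather than raw linear independence, but the underlying argument is identical.
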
 

\begin{proof} Let $I \in \fI$ with $\xi_{I} \neq 0$ and k with $1 \leq k \leq n$ be given.  By Lemma 4.2 $\pm e_{I} e_{k} = e_{k} e_{I} = \lambda_{k} e_{I_{k}}$ for some $0 \neq \lambda_{k} \in F$, where $I_{k} = I \cup \{k \}$ if $k \notin I$ and $I_{k} = I - \{k \}$ if $k \in I$.  By inspection $I_{k} \neq J_{k}$ if $I \neq J \in \fI$. By Proposition 1.3 we have $\xi_{I} \overline{Q}(e_{k} e_{I} , e_{I_{k}}) = \overline{Q}(e_{k} \xi , e_{I_{k}}) = \overline{Q}(\xi e_{k} , e_{I_{k}}) = \xi_{I} \overline{Q}(e_{I} e_{k} , e_{I_{k}}) = \epsilon~\xi_{I} \overline{Q}(e_{k} e_{I} , e_{I_{k}})$, where $ \epsilon = \pm 1$ and $e_{k} e_{I} = \epsilon~ e_{I} e_{k}$.  Note that $\overline{Q}(e_{k} e_{I} , e_{I_{k}}) = \lambda_{k} \overline{Q}(e_{I_{k}} , e_{I_{k}}) \neq 0$ by 1) of Proposition 1.3.  Hence $\epsilon = 1$ since $\xi_{I} \neq 0$.
\newline

\noindent We have shown that $e_{k} e_{I} = e_{I} e_{k}$ for $1 \leq k \leq n$.  Hence $e_{I} \in \fZ(V,Q)$ since $e_{I}$ commutes with the elements in the set $\{e_{1}, ... , e_{n} \}$, which generates $C\ell(V,Q)$ as an algebra.
\end{proof}

\begin{lemma}  If $e_{I} \in \fZ(V,Q)$ for some $I \in \frak{I}$, then $e_{I} = \omega$ and n is odd. 
\end{lemma}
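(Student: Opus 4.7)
The plan is to exploit the anti-commutation relations among the basis vectors $e_1,\ldots,e_n$ and compare $e_k \, e_I$ with $e_I \, e_k$ for each index $k \in \{1,\ldots,n\}$. Since $\fB = \{e_1,\ldots,e_n\}$ is $Q$-orthogonal, part 1) of Corollary 1.2 gives $e_j e_k = -e_k e_j$ whenever $j \neq k$; this is the only input I need beyond what has already been proved.

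First, consider an index $k \notin I$. Writing $e_I = e_{i_1} \cdots e_{i_{|I|}}$, we move $e_k$ past every factor of $e_I$, each swap producing a sign $-1$ because $k \neq i_j$ for all $j$. This yields $e_k e_I = (-1)^{|I|} e_I e_k$. Since $e_I \in \fZ(V,Q)$ the two sides must agree, forcing $|I|$ to be even. Next, consider $k \in I$, say $k = i_j$. Computing $e_k e_I$ by moving $e_k$ rightward until it meets $e_{i_j}$ produces the factor $(-1)^{j-1}$ together with $e_k \cdot e_{i_j} = e_k^2 = -Q(e_k,e_k)$, while computing $e_I e_k$ by moving $e_k$ leftward from the right end produces $(-1)^{|I|-j}$ together with $e_{i_j} \cdot e_k = -Q(e_k,e_k)$. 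Centrality of $e_I$ then forces $(-1)^{j-1} = (-1)^{|I|-j}$, i.e.\ $|I|$ must be odd.

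Since $I \in \fI$ is nonempty, at least one $k \in I$ exists, so $|I|$ is odd. If some $k \notin I$ also existed, the first calculation would force $|I|$ even, a contradiction. Therefore $I = \{1,2,\ldots,n\}$, which means $e_I = \omega$, and $n = |I|$ is odd.

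The main obstacle, insofar as there is one, is being careful with the two sign computations in the case $k \in I$; the rest is a clean dichotomy. Note that I am only using the anti-commutation of distinct orthogonal basis vectors and the trivial observation $e_k^2 \in F$, both of which follow from Corollary 1.2(1); Lemma 4.2 gives the same information in a more condensed form and could be cited in place of redoing the signs by hand.
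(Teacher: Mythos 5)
Your proof is correct and follows essentially the same route as the paper's: compare $e_k e_I$ with $e_I e_k$ via anti-commutation signs, once for $k \in I$ (forcing $|I|$ odd) and once for $k \notin I$ (forcing a contradiction once $|I|$ is known to be odd). The paper streamlines the $k \in I$ case by taking $k = i_1$, so the sign is simply $(-1)^{|I|-1}$ rather than your $(-1)^{j-1}$ versus $(-1)^{|I|-j}$ for arbitrary $j$, but the content is identical.
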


\begin{proof}  Let $I = (i_{1}, i_{2}, ... , i_{k})$, where $i_{j} < i_{j+1}$ for $1 \leq j \leq k-1$.  We show first that k is odd.  We compute $e_{i_{1}} e_{I} = (e_{i_{1}})^{2} e_{i_{2}} ... e_{i_{k}}$ and $e_{I} e_{i_{1}} = (-1)^{k-1} (e_{i_{1}})^{2} e_{i_{2}} ... e_{i_{k}}$.  Hence k is odd since $e_{i_{1}} e_{I} = e_{I} e_{i_{1}}$.
\newline

\noindent  Next suppose that $|I| = k < n$.  Choose $\alpha \in \{1,2, ... , n \}$ with $\alpha \notin \{i_{1}, i_{2}, ... , i_{k} \}$.   Then $e_{\alpha} e_{I} = e_{\alpha} e_{i_{1}} e_{i_{2}} ... e_{i_{k}}$ and $e_{I} e_{\alpha} = (-1)^{k} e_{\alpha} e_{i_{1}} e_{i_{2}} ... e_{i_{k}} = -1~e_{\alpha} e_{i_{1}} e_{i_{2}} ... e_{i_{k}} = - e_{\alpha} e_{I}$ since k is odd.  This contradiction shows that $|I| = k = n$ and $e_{I} = \omega \in \fZ(V,Q)$.  Finally, $n = k$ is odd, as was shown above.
\end{proof}

\noindent  We now prove the Proposition.  Assertion 1b) follows immediately from Lemmas 5.2 and 5.3. To prove 1a) we show first  that $\omega e_{i} = - e_{i} \omega$ for $1 \leq i \leq n$ if n is even.  We compute $\omega e_{i} = (-1)^{n-i} e_{1} e_{2} ... e_{i-1} e_{i}^{2} e_{i+1} ... e_{n}$ and $- e_{i} \omega = (-1)^{i} e_{1} e_{2} ... e_{i-1} e_{i}^{2} e_{i+1} ... e_{n} = \omega e_{i}$ since n  even implies that $n - 2i$ is even.    Next, we let $\fA = \{\varphi \in C\ell(V,Q) :   \varphi \omega = \omega \alpha(\varphi) \}$.  Note that $\fA$ is a subalgebra of $C\ell(V,Q)$ since $\alpha$ is an automorphism of $C\ell(V,Q)$.  It follows that $\fA = C\ell(V,Q)$ by 2) of Corollary 1.2 since $\fA \supset V$ by the work above.
\newline

\noindent Assertion 3) follows routinely by induction on n.  We prove  2).  Let n be odd.  Then  $\fZ(V,Q) \subset F-\rm{span}\{1,\omega\}$ by Lemmas 5.2 and 5.3.  To complete the proof of 2) it suffices to show that $\omega \in \fZ(V,Q)$ if n is odd.  This follows by the same argument used to prove 1a), namely by showing that $\omega e_{k} = e_{k} \omega$ for $1 \leq k \leq n$.

\section{Isometries and infinitesimal isometries}

\noindent $\mathbf{Isometries~ of~ C\ell(V,Q)}$

\begin{proposition}   Consider the following groups :

\noindent  $U(V,Q) = \{g \in C\ell(V,Q) : g \cdot c(g) = c(g) \cdot  g = 1 \}$.  

\noindent $U_{L}(V,Q) = \{g \in C\ell(V,Q) : L(g) ~\rm{preserves}~ \overline{Q} \}$ 

\noindent $U_{R}(V,Q) = \{g \in C\ell(V,Q) : R(g) ~\rm{preserves}~ \overline{Q} \}$.  

\noindent Then $U(V,Q) = U_{L}(V,Q) \cap  U_{R}(V,Q)$.
\end{proposition}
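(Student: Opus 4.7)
The plan is to use Proposition 4.1, specifically the identifications $L(g)^{*} = L(c(g))$ and $R(g)^{*} = R(c(g))$, to translate the isometry conditions directly into algebraic identities in $C\ell(V,Q)$.

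First I will recall that, since $\overline{Q}$ is nondegenerate on the finite dimensional space $C\ell(V,Q)$, a linear map $T$ preserves $\overline{Q}$ if and only if $T^{*} T = I$ (and equivalently $T T^{*} = I$). Applying this to $T = L(g)$ and using Proposition 4.1, the condition $g \in U_{L}(V,Q)$ becomes $L(c(g)) L(g) = I$. Since $L$ is multiplicative ($L(a)L(b) = L(ab)$), this collapses to $L(c(g) \cdot g) = I$, and evaluating at the identity $1 \in C\ell(V,Q)$ shows $L(h) = I \iff h = 1$. Hence $g \in U_{L}(V,Q)$ if and only if $c(g) \cdot g = 1$.

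Next I will repeat the computation on the right, which is the place where care is needed. The composition rule for right translations is $R(a) R(b) = R(ba)$ (order reversed), so $R(g)^{*} R(g) = R(c(g)) R(g) = R(g \cdot c(g))$. Thus $g \in U_{R}(V,Q)$ if and only if $g \cdot c(g) = 1$. Combining the two equivalences,
\[
U_{L}(V,Q) \cap U_{R}(V,Q) \;=\; \{g \in C\ell(V,Q) : c(g) \cdot g = 1 \text{ and } g \cdot c(g) = 1\} \;=\; U(V,Q),
\]
which is the desired identity.

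The only real obstacle is the order-reversal in the right-translation calculation; one must not conflate $R(a)R(b)$ with $R(ab)$. Everything else is a routine unpacking of the adjoint formulas from Proposition 4.1 together with the fact that in a finite dimensional algebra, if $L(g)$ is injective, then it is bijective, so one-sided identities automatically become two-sided. In fact the argument shows the stronger statement $U_{L}(V,Q) = U_{R}(V,Q) = U(V,Q)$, but only the intersection identity is needed here.
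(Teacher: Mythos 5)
Your proof is correct and follows essentially the same route as the paper's: both rely on the adjoint formulas $L(g)^{*}=L(c(g))$ and $R(g)^{*}=R(c(g))$ from Proposition 4.1 to convert the isometry conditions into the algebraic equations $c(g)\cdot g=1$ and $g\cdot c(g)=1$, with the paper unwinding $\overline{Q}$ explicitly where you instead invoke the abstract characterization ``$T$ preserves $\overline{Q}$ iff $T^{*}T=I$.'' Your closing observation that in fact $U_{L}(V,Q)=U_{R}(V,Q)=U(V,Q)$ is a valid strengthening (since in finite dimensions $T^{*}T=I$ forces $TT^{*}=I$), which the paper does not make explicit.
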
 

\noindent $\mathbf{Remark}$  We regard the group $G = U(V,Q)$ as the group of isometries of $C\ell(V,Q)$. 

\begin{proof}  We show first that U(V,Q) is a subgroup of both $U_{L}(V,Q)$ and $U_{R}(V,Q)$.  Let g be an element of U(V,Q).  Let x,y be elements of $C\ell(V,Q)$.  Then $\overline{Q}(L(g) x ,L(g) y) = \overline{Q}(x, L(c(g)) L(g)(y)) = \overline{Q}(x, L (c(g) \cdot g) y) = \overline{Q}(x,y)$.  This shows that $U(V,Q) \subset U_{L}(V,Q)$ and a similar proof shows that $U(V,Q) \subset U_{R}(V,Q)$.
\newline

\noindent  We now show that $ U_{L}(V,Q) \cap  U_{R}(V,Q) \subset U(V,Q)$.  Let g $\in U_{L}(V,Q) \cap U_{R}(V,Q)$ and  x,y $\in C\ell(V,Q)$ be given.  Since $g \in U_{L}(V,Q)$ it follows that  $\overline{Q}(x,y) = \newline \overline{Q}(L(g) x, L(g) y) = \overline{Q}(x, L(c(g))L(g) y) = \overline{Q}(x, L(c(g) \cdot g) y) = \overline{Q}(x, c(g) \cdot g \cdot y)$.   Hence $0 = \overline{Q}(x, [1 - c(g) \cdot g] \cdot y)$ for all x,y $\in C\ell(V,Q)$.  By the nondegeneracy of $\overline{Q}$ it follows that  $[1 - c(g) \cdot g] \cdot  y = 0$ for all y $\in C\ell(V.Q)$.  Setting $y=1$ we conclude that $c(g) \cdot g =1$.  A similar argument shows that $g \cdot c(g) =1$ since $g \in U_{R}(V,Q)$.  Hence $g \in U(V,Q)$.
\end{proof}

\noindent $\mathbf{Infinitesimal~ isometries~ of~ C\ell(V,Q)}$

\noindent  We define $\fG = \{\xi \in C\ell(V,Q) : c(\xi) = - \xi \}$, where $c : C\ell(V,Q) \rightarrow C\ell(V,Q)$ is the canonical anti-automorphism.  The set $\fG$ is a Lie algebra since c is an anti-automorphism. For reasons we now explain, we call $\fG$ the Lie algebra of infinitesimal isometries of $C\ell(V,Q)$ relative to $\overline{Q}$.   
\newline

\noindent $\mathbf{Remark}$ Let G denote U(V,Q).  If $F =  \br$ or $\bc$, then the matrix exponential $exp : C\ell(V,Q) \rightarrow C\ell(V,Q)$ can be defined in the usual way.  If $\xi \in C\ell(V,Q)$, then the definition of G  and standard arguments show that $\xi \in \fG \Leftrightarrow exp(t \xi) \in G~\rm{for~all}~ t \in \br$.  Both G = U(V,Q) and the Lie algebra $\fG$ have been defined for any field F with characteristic $\neq 2$, but the relationship stated above between G and $\fG$ makes no sense for an arbitrary field F.  Nevertheless, it seems reasonable to think of $\fG$ as the Lie algebra of infinitesimal isometries of $C\ell(V,Q)$.
\newline

\noindent $\mathbf{Center~\fZ(\fG)~ of~ \fG}$
\newline

\noindent  Let $\fZ(\fG) = \{\xi \in \fG : ad~\xi = 0~on~\fG \}$, the center of $\fG$.

\begin{proposition}  $\fZ(\fG) = \fZ(V,Q) \cap \fG$.
\end{proposition}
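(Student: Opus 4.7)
The plan is to prove the two inclusions separately, exploiting the observation that $V$ sits inside $\fG$ and generates the whole Clifford algebra.

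First I would dispatch the easy inclusion $\fZ(V,Q) \cap \fG \subseteq \fZ(\fG)$. If $\xi \in \fZ(V,Q) \cap \fG$, then $\xi\eta = \eta\xi$ for every $\eta \in C\ell(V,Q)$, so in particular $[\xi,\eta] = \xi\eta - \eta\xi = 0$ for every $\eta \in \fG$, giving $\xi \in \fZ(\fG)$.

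For the reverse inclusion I would key on two facts already established in the excerpt. First, $c \equiv -\text{Id}$ on $V$, so $c(v) = -v$ for every $v \in V$, which shows $V \subseteq \fG$. Second, by part 2) of Corollary 1.2, the only subalgebra of $C\ell(V,Q)$ containing $V$ is $C\ell(V,Q)$ itself, so $V$ generates $C\ell(V,Q)$ as an algebra. Now let $\xi \in \fZ(\fG)$. Since $V \subseteq \fG$, we have $[\xi,v] = 0$ for all $v \in V$, i.e.\ $\xi v = v\xi$ for all $v \in V$. The set $\fA = \{x \in C\ell(V,Q) : \xi x = x\xi\}$ is a subalgebra of $C\ell(V,Q)$ containing $V$, so by Corollary 1.2(2), $\fA = C\ell(V,Q)$. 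Thus $\xi$ commutes with every element of $C\ell(V,Q)$ and lies in $\fZ(V,Q)$; combined with $\xi \in \fG$, this gives $\xi \in \fZ(V,Q) \cap \fG$.

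There is no real obstacle here; the entire argument rests on the two observations $V \subseteq \fG$ and $V$ generates $C\ell(V,Q)$. The only point to verify carefully is that commuting with every generator forces commuting with every product of generators, which is immediate from the distributive and associative laws.
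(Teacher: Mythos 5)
Your proof is correct and follows essentially the same route as the paper: the easy inclusion is immediate, and for the reverse inclusion you observe that $V\subseteq\fG$, so a central element of $\fG$ commutes with $V$, and the commutant of that element is a subalgebra containing $V$, hence all of $C\ell(V,Q)$ by Corollary 1.2(2). This is exactly the paper's argument.
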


\begin{proof}  \noindent  Clearly $\fZ(V,Q) \cap \fG \subset \fZ(\fG)$.  Now let $\xi \in \fZ(\fG)$ be a nonzero element, and let $\fH = \{\eta \in C\ell(V,Q) : \xi \cdot \eta = \eta \cdot \xi \}$.  It is easy to check that $\fH$ is a subalgebra of $C\ell(V,Q)$, and $\fH \supset V$ since $V \subset \fG$.  Hence $\fH = C\ell(V,Q)$ since V generates $C\ell(V,Q)$ as an algebra by 2) of Corollary 1.2.  We conclude that  $\xi \in \fZ(V,Q)$.
\end{proof}

\begin{corollary}  If n is even or $n \equiv 3~ (mod~4)$, then $\fZ(\fG) = \{ 0 \}$.  If $n \equiv 1~(mod~4)$, then $\fZ(\fG) = F-span \{\omega \}$
\end{corollary}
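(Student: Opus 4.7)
The plan is to combine Proposition 6.2, which identifies $\fZ(\fG)$ with $\fZ(V,Q) \cap \fG$, with Proposition 5.1, which describes $\fZ(V,Q)$ explicitly in terms of the top form $\omega = e_{1} \cdots e_{n}$. So the task reduces to determining which elements of $\fZ(V,Q)$ satisfy the defining condition $c(\xi) = -\xi$ of $\fG$.

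First I handle the case $n$ even. By 1b) of Proposition 5.1, $\fZ(V,Q) = F$, and every $a \in F$ satisfies $c(a) = a$ (this is immediate from the fact that $c$ is an anti-automorphism fixing $1$, and it is also recorded in Lemma 4.3). The condition $c(\xi) = -\xi$ then forces $\xi = -\xi$, so $\xi = 0$ because $\mathrm{char}\,F \neq 2$. Hence $\fZ(\fG) = \{0\}$.

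Next I handle the case $n$ odd. By 2) of Proposition 5.1, $\fZ(V,Q) = F\text{-span}\{1,\omega\}$. Writing $\xi = a + b\omega$ with $a,b \in F$ and using $c(1) = 1$, the condition $c(\xi) = -\xi$ becomes $a = 0$ and $c(\omega) = -\omega$. So everything reduces to computing $c(\omega)$. The formula for $c$ on monomials gives $c(\omega) = (-1)^{n} e_{n} \cdot e_{n-1} \cdots e_{1}$, and since the $e_i$'s $Q$-anticommute (by 1) of Corollary 1.2, applied to orthogonal basis vectors), reversing the product back to $e_{1} \cdots e_{n} = \omega$ costs the sign $(-1)^{n(n-1)/2}$. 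Thus $c(\omega) = (-1)^{n + n(n-1)/2}\,\omega$.

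It remains to analyze the parity of $n + n(n-1)/2$ modulo $2$ for $n$ odd. For $n = 4k+1$, one has $n(n-1)/2 = 2k(4k+1)$, which is even, so $c(\omega) = -\omega$ and $\omega \in \fG$; hence $\fZ(\fG) = F\text{-span}\{\omega\}$. For $n = 4k+3$, one has $n(n-1)/2 = (4k+3)(2k+1)$, which is odd, so $c(\omega) = +\omega$ and $\omega \notin \fG$; combined with $1 \notin \fG$, this gives $\fZ(\fG) = \{0\}$. No step here is a real obstacle; the only bookkeeping subtlety is the $\pmod 4$ parity computation, which is the reason the statement splits the odd case into the two residues $1$ and $3$.
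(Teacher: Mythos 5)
Your proof is correct and follows essentially the same path as the paper: reduce to $\fZ(V,Q)\cap\fG$ via Proposition 6.2, describe $\fZ(V,Q)$ via Proposition 5.1, and then decide whether $\omega$ lies in $\fG$ by computing the sign of $c(\omega)$. The only cosmetic difference is that you compute $c(\omega)=(-1)^{n+n(n-1)/2}\omega$ directly from the monomial formula for $c$, whereas the paper invokes its Lemma 6.4 (the general statement that $c(e_I)=\pm e_I$ according to $|I|\bmod 4$), which it has already established for other purposes; both yield the same case analysis on $n\bmod 4$.
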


\noindent We need a preliminary result

\begin{lemma}   Let $\{e_{1}, ... , e_{n} \}$ be a Q-orthogonal basis of V. Let $c : C\ell(V,Q) \rightarrow C\ell(V,Q)$ be the canonical anti-automorphism.  Let $I = (i_{1}, i_{2}, ... , i_{k}) \in \frak{I}$ be an arbitrary multi-index.  Then

	1) $c(e_{I}) = - e_{I}$ if $|I| \equiv 1$ or $2~mod~4$
	
	2) $c(e_{I}) =  e_{I}$ if $|I| \equiv 0$ or $3~mod~4$
\end{lemma}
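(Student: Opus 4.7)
The plan is to reduce everything to a direct sign computation. First, I would invoke the monomial formulas displayed earlier in Section 1 to write
\[
c(e_I) \;=\; (-1)^{k}\,e_{i_k}\cdot e_{i_{k-1}}\cdot\, \cdots\, \cdot e_{i_1},
\]
where $k=|I|$ and $i_1<i_2<\cdots<i_k$. The remaining task is to commute this reversed product back into the standard ascending order.

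Next, I would observe that because the basis $\{e_1,\ldots,e_n\}$ is $Q$-orthogonal, part (1) of Corollary 1.2 gives $e_ie_j+e_je_i=-2Q(e_i,e_j)=0$ whenever $i\neq j$; that is, the basis vectors pairwise anticommute inside $C\ell(V,Q)$. Reversing a product of $k$ pairwise-distinct anticommuting factors requires exactly $\binom{k}{2}=\tfrac{k(k-1)}{2}$ adjacent transpositions (a standard bubble-sort count, which one can formalize by a quick induction on $k$). Each transposition contributes a factor of $-1$, so
\[
e_{i_k}\cdot e_{i_{k-1}}\cdot\,\cdots\,\cdot e_{i_1} \;=\; (-1)^{k(k-1)/2}\,e_{i_1}\cdot e_{i_2}\cdot\,\cdots\,\cdot e_{i_k} \;=\; (-1)^{k(k-1)/2}\,e_I.
\]

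Combining the two sign contributions yields
\[
c(e_I) \;=\; (-1)^{k+k(k-1)/2}\,e_I \;=\; (-1)^{k(k+1)/2}\,e_I.
\]
The lemma then follows by inspecting the exponent $k(k+1)/2$ modulo $2$ in each of the four residue classes of $k$ modulo $4$: for $k\equiv 0$ or $3\pmod 4$ the exponent is even and $c(e_I)=e_I$, while for $k\equiv 1$ or $2\pmod 4$ the exponent is odd and $c(e_I)=-e_I$.

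There is no real obstacle here; the only mild subtlety is keeping the two sign contributions straight — one from the factor $(-1)^k$ built into the definition of $c=\alpha\circ\tau$, and the other from the combinatorial cost of reversing an anticommuting product. Once those are isolated, the whole statement is a single modular arithmetic check.
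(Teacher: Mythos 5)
Your argument is correct, and it takes a genuinely different route from the paper. The paper proves the lemma by induction: it verifies the cases $|I| \le 4$ directly, and then for $|I| \ge 5$ peels off the last four indices $I_2$, observes that $c(e_{I_2}) = e_{I_2}$ and that $e_{I_2}$ commutes with $c(e_{I_1})$ up to the sign $(-1)^{4|I_1|} = 1$, and concludes $c(e_I) = c(e_{I_1}) e_{I_2}$, reducing $|I|$ by $4$. You instead derive the closed form $c(e_I) = (-1)^{k(k+1)/2} e_I$ in one pass, combining the factor $(-1)^k$ from the monomial formula for $c = \alpha\circ\tau$ with the $\binom{k}{2}$ sign picked up while re-sorting the reversed product of pairwise anticommuting orthogonal basis vectors, and then reading off the parity of $k(k+1)/2$ modulo $2$. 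Your version is a bit more self-contained — the paper's base case $|I|\le 4$, dismissed as ``routine,'' is really the same small sign computation — and it yields a sharper statement (the explicit sign $(-1)^{k(k+1)/2}$, which also reappears in Lemma~4.2 and Proposition~5.1(3)). The paper's induction avoids the bubble-sort count but trades it for the step-of-four bookkeeping; the two are essentially equivalent in length, and both are sound.
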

	
\begin{proof}  For $|I|$ = 1,2,3 or 4 it is routine to verify the assertion.  Let $|I| \geq 5$ and write $I = I_{1} \cup I_{2}$, where $I_{2} = \{i_{k-3}, i_{k-2}, i_{k-1},i_{k} \}$ and $I_{1} = I - I_{2}$.  Then $e_{I} = e_{I_{1}} \cdot e_{I_{2}}$ and $c(e_{I}) = c(e_{I_{2}})\cdot c(e_{I_{1}}) = e_{I_{2}} \cdot c(e_{I_{1}}) = (-1)^{4|I_{1}|} c(e_{I_{1}}) \cdot e_{I_{2}} = c(e_{I_{1}}) \cdot e_{I_{2}}$.  The assertion now follows by induction since $|I| = |I_{1}| + 4$.
\end{proof}

\noindent  We complete the proof of the Corollary.  If n is even, then $\fZ(\fG) = \fZ(V,Q) \cap \fG = F \cap \fG = \{0\}$ by assertion 1) of Proposition 5.1.  If $n \equiv 3~(mod~4)$, then $c(\omega) = \omega$ by Lemma 6.4, and hence $\omega$ does not lie in $\fG$.  It follows that $\fZ(\fG) = \fZ(V,Q) \cap \fG = \{0 \}$ by assertion 2) of Proposition 5.1.  If $n \equiv 1~(mod~4)$, then $c(\omega) = - \omega$ by Lemma 6.4, and hence $\omega \in \fG$.  We conclude that  $\fZ(\fG) = \fZ(V,Q) \cap \fG = F-span~\{\omega \}$ by assertion 2) of Proposition 5.1.
\newline

\noindent $\mathbf{Killing~ form~ of~ \fG}$
\newline

\noindent Let $\{e_{1}, ... , e_{n} \}$ be a Q-orthogonal basis of V.  Define the Killing form $B : \fG \times \fG \rightarrow F$ by $B(\xi, \eta) = trace~ad~\xi \circ ad~\eta$.  In this section we show that $B(e_{I},e_{J}) = 0$ if $e_{I}, e_{J}$ are distinct elements of $\fG$.  We also compute $B(e_{I},e_{I})$ for all $e_{I} \in \fG$.  Finally, we show that $\fG = \fZ(\fG) \oplus \fH$, where $\fH$ is an ideal of $\fG$ such that B restricted to $\fH$ is nondegenerate.  By Corollary 6.3 it then follows that $\fG = \fH$ if $n \neq~1~(mod~4)$ and $\fH$ has codimension 1 in $\fG$ if $n \equiv 1~(mod~4)$.

\begin{lemma}  $\fG = F-\rm {span} \{e_{I} : e_{I} \in \fG \}$.
\end{lemma}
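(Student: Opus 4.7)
The plan is to exploit the fact that the involution $c$ acts diagonally on the basis $\fB' = \{1\} \cup \{e_I : I \in \fI\}$ of $C\ell(V,Q)$ provided by Proposition 1.3: by Lemma 6.4, $c$ sends every basis element to $\pm$ itself, with the sign determined by $|I| \bmod 4$. Thus $\fG$, being the $(-1)$-eigenspace of $c$, should simply be the $F$-span of those basis vectors on which $c$ acts as $-1$.

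Concretely, I would first write an arbitrary $\xi \in \fG$ in coordinates relative to $\fB'$ as
\[
\xi = a + \sum_{I \in \fI} a_I\, e_I, \qquad a,\, a_I \in F.
\]
Applying $c$ and using Lemma 6.4 gives
\[
c(\xi) = a + \sum_{\substack{I \in \fI \\ |I| \equiv 0,3\,(4)}} a_I\, e_I \;-\; \sum_{\substack{I \in \fI \\ |I| \equiv 1,2\,(4)}} a_I\, e_I.
\]
The defining condition $c(\xi) = -\xi$, read off coefficient by coefficient in the basis $\fB'$, yields $a = -a$ (so $a = 0$, using $\mathrm{char}\, F \neq 2$), together with $a_I = -a_I$ (hence $a_I = 0$) for every $I$ with $|I| \equiv 0$ or $3 \pmod 4$, while the equations for $I$ with $|I| \equiv 1$ or $2 \pmod 4$ are automatically satisfied. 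This forces $\xi$ to be an $F$-linear combination of those $e_I$ lying in $\fG$.

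The reverse inclusion is trivial, since $\fG$ is an $F$-linear subspace of $C\ell(V,Q)$ and each $e_I$ appearing in the spanning set belongs to $\fG$ by construction. I do not expect any serious obstacle here: the argument is essentially the eigenspace decomposition of an involution that is already known to be diagonal in the chosen basis, and the only ingredients needed, namely Proposition 1.3 and Lemma 6.4, have already been established.
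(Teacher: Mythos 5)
Your proof is correct and follows essentially the same path as the paper: write $\xi$ in the $\overline{Q}$-orthogonal basis $\fB'$ from Proposition 1.3, apply $c$ using Lemma 6.4 to see that $c$ acts as $\pm 1$ on each basis element, and compare coefficients to conclude that $\xi_0 = 0$ and only the $e_I$ with $c(e_I) = -e_I$ can have nonzero coefficient. The only cosmetic difference is that you make the dependence on $|I| \bmod 4$ and on $\operatorname{char} F \neq 2$ explicit, whereas the paper folds those into the phrase ``by inspection and the linear independence.''
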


\begin{proof}  Clearly $\fG \supset F-\rm{span} \{e_{I} : e_{I} \in \fG \}$.  Now let $\xi \in \fG$ and write $\xi = \xi_{0} + \sum_{I \in \frak{I}} \xi_{I} e_{I}$, where $\xi_{0}, \xi_{I} \in F$.  Then  $ - \xi_{0} - \sum_{I \in \frak{I}} \xi_{I} e_{I} = - \xi = c(\xi) =   \xi_{0} + \sum_{I \in \frak{I}} \xi_{I} c(e_{I})$.  By Lemma 6.4  $c(e_{I}) = \pm e_{I}$.  By inspection and the linear independence of $\{e_{I} : I \in \fI \}$  it follows that $\xi_{0} = 0$ and $c(e_{I}) = - e_{I}$ if $\xi_{I} \neq 0$.  We conclude that $\xi \in  F-\rm{span} \{e_{I} : e_{I} \in \fG \}$, which completes the proof.
\end{proof}

\begin{proposition}   Let $\{e_{1}, ... , e_{n} \}$ be a Q-orthogonal basis of V.  Let I,J be distinct multi-indices in $\fI$ such that $e_{I} \in \fG$ and $e_{J} \in \fG$.  Then $B(e_{I}, e_{J}) = 0$.
\end{proposition}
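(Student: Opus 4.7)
The plan is to compute $B(e_I,e_J) = \operatorname{tr}(\operatorname{ad} e_I \circ \operatorname{ad} e_J)$ directly by writing out the matrix of this composition with respect to the basis of $\fG$ furnished by Lemma 6.5, namely $\mathcal B = \{e_K : e_K \in \fG\}$, and showing that every diagonal entry is zero when $I \neq J$. The underlying combinatorial fact driving the argument is that the product of two basis elements of $C\ell(V,Q)$ is, up to a nonzero scalar, again a basis element indexed by the symmetric difference of the multi-indices.

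First, I would record the following bracket rule. By Lemma 4.2, for any $K,L \in \fI$ we have $e_K \cdot e_L = \pm e_L \cdot e_K = \lambda_{KL}\, e_{K \triangle L}$ for some nonzero $\lambda_{KL} \in F$, where $K\triangle L = (K\cup L) - (K\cap L)$ and (by convention) $e_\emptyset = 1$. Consequently
\[
[e_K,e_L] \;=\; e_K e_L - e_L e_K \;=\; c_{KL}\, e_{K \triangle L},
\]
where $c_{KL} = 0$ if $e_K,e_L$ commute (in particular whenever $K=L$), and $c_{KL} = 2\lambda_{KL}$ if they anticommute. Thus a bracket of two $e$-basis elements is always a scalar multiple of a single $e$-basis element.

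Second, I would apply this rule twice. For each $e_K \in \mathcal B$,
\[
(\operatorname{ad} e_I \circ \operatorname{ad} e_J)(e_K) \;=\; [e_I,[e_J,e_K]] \;=\; c_{JK}\, c_{I,\, J\triangle K}\; e_{I \triangle J \triangle K}.
\]
The diagonal entry of $\operatorname{ad} e_I \circ \operatorname{ad} e_J$ at $e_K$ (with respect to $\mathcal B$) is the coefficient of $e_K$ in this expression; since the expression is a scalar multiple of the single element $e_{I \triangle J \triangle K}$, this coefficient can be nonzero only if $I \triangle J \triangle K = K$, that is, $I \triangle J = \emptyset$, i.e.\ $I = J$. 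Under the hypothesis $I \neq J$, every diagonal entry vanishes and summing over $\mathcal B$ gives $B(e_I,e_J)=0$.

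I do not anticipate a real obstacle; the proof is a clean bookkeeping argument. The only mild point to verify is the edge case $K = I\triangle J$, where $e_{I \triangle J \triangle K} = 1$ rather than a basis vector of $\fG$; in this case the diagonal contribution to the trace is automatically zero because $1 \notin \mathcal B$ (equivalently, $[e_I,[e_J,e_{I\triangle J}]] = [e_I,\, c\, e_I] = 0$), so the conclusion is unaffected.
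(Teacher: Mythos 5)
Your proof is correct, and it takes a genuinely different route from the paper's. The paper proceeds through metric machinery: it first establishes (Lemma 6.7) that $\operatorname{ad}\,\xi$ is skew-symmetric with respect to $\overline{Q}$ for $\xi\in\fG$, then uses that together with the explicit formula $\overline{Q}(x,y)=p(x\,c(y))$ to show $\overline{Q}\bigl(\operatorname{ad}\,e_I\circ\operatorname{ad}\,e_J(e_K),\ e_K\bigr)=0$ (Lemma 6.8), and finally extracts the vanishing of each diagonal entry $A_{IJK}^{K}$ from the $\overline{Q}$-orthogonality of the basis $\{e_K\}$ (Proposition 1.3). You bypass all of that and argue directly at the level of the monomial basis, observing from Lemma 4.2 that products, hence brackets, of basis monomials are scalar multiples of the single monomial indexed by the symmetric difference, so that $\bigl(\operatorname{ad}\,e_I\circ\operatorname{ad}\,e_J\bigr)(e_K)$ is a scalar times $e_{I\triangle J\triangle K}$, whose $e_K$-component is nonzero only when $I\triangle J=\emptyset$. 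Both arguments ultimately reduce to the same combinatorial fact that $e_Ie_J$ has zero scalar part when $I\neq J$, but yours reaches it more economically and without invoking $\overline{Q}$ or the skew-symmetry of $\operatorname{ad}$ at all; the paper's route has the side benefit of developing Lemmas 6.7--6.8, which it reuses in the proof of Proposition 6.9, so neither approach is wasted effort in context. Your handling of the edge case $K=I\triangle J$ (where the double bracket lands in $F\cdot 1$, or in fact vanishes outright) is also correct.
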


\noindent Again, we need some preliminary results.

\begin{lemma}  Let $\xi \in \fG$.  Then for all $x,y \in \fG$ we have $\overline{Q}(ad~\xi(x),y) + \overline{Q}(x, ad~\xi(y)) = 0$.
\end{lemma}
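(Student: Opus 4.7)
The plan is to reduce the identity to the adjoint formulas from Proposition~4.1, using only the definitions $ad\,\xi = L(\xi) - R(\xi)$ and $c(\xi) = -\xi$. The whole proof is essentially a one-line calculation once these ingredients are in hand, so the main question is organizational rather than technical: there is no real obstacle, just the need to keep the two terms $L(\xi)$ and $R(\xi)$ straight.

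First, I would observe that for any $\eta \in C\ell(V,Q)$ one has $ad\,\xi(\eta) = \xi\eta - \eta\xi = L(\xi)\eta - R(\xi)\eta$, so it suffices to compute $\overline{Q}(L(\xi)x,y)$ and $\overline{Q}(R(\xi)x,y)$ separately. For the first, property~2) of Proposition~4.1 gives $L(\xi)^{*} = L(c(\xi))$, and since $\xi \in \fG$ we have $c(\xi) = -\xi$, so
\[
\overline{Q}(L(\xi)x,y) = \overline{Q}(x, L(c(\xi))y) = \overline{Q}(x, -\xi y) = -\overline{Q}(x, L(\xi)y).
\]
The analogous computation using property~3) yields
\[
\overline{Q}(R(\xi)x,y) = \overline{Q}(x, R(c(\xi))y) = -\overline{Q}(x, R(\xi)y).
\]

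Subtracting the second identity from the first and regrouping,
\[
\overline{Q}(ad\,\xi(x),y) = -\overline{Q}(x, L(\xi)y) + \overline{Q}(x, R(\xi)y) = -\overline{Q}\bigl(x, (L(\xi)-R(\xi))y\bigr) = -\overline{Q}(x, ad\,\xi(y)),
\]
which is exactly the desired relation. Note that this derivation is valid for arbitrary $x,y \in C\ell(V,Q)$, not just for $x,y \in \fG$; the assumption $\xi \in \fG$ is the only place where $c(\xi) = -\xi$ is used, so the lemma in fact records the skew-adjointness of $ad\,\xi$ with respect to $\overline{Q}$ whenever $\xi$ is an infinitesimal isometry.
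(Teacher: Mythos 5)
Your proof is correct and follows exactly the same route as the paper: decompose $ad\,\xi = L(\xi)-R(\xi)$, apply properties 2) and 3) of Proposition~4.1 to pass the adjoints across $\overline{Q}$, and use $c(\xi)=-\xi$. Your closing observation that the identity actually holds for all $x,y\in C\ell(V,Q)$ (not just in $\fG$) is a correct and worthwhile remark the paper does not make explicit.
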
  

\begin{proof}  Note that $ad~\xi = L_{\xi} - R_{\xi}$.  By Proposition 4.1 we have $\overline{Q}(ad~\xi(x) , y) = \overline{Q}(L_{\xi}(x) , y) - \overline{Q}(R_{\xi}(x) , y) = \overline{Q}(x,L_{c(\xi)}(y)) -  \overline{Q}(x,R_{c(\xi)}(y)) = - \overline{Q}(x , L_{\xi}(y)) + \overline{Q}(x , R_{\xi}(y)) = -\overline{Q}(x, (L_{\xi} - R_{\xi})(y)) = - \overline{Q}(x , ad~\xi(y)$.
\end{proof}

\begin{lemma}  Let $I,J \in \frak{I}$ be distinct elements such that $e_{I},e_{J} \in \fG$.  Then \newline $\overline{Q}(ad~e_{I} \circ ad~e_{J}(e_{K} ), e_{K}) = 0$ for all $K \in \frak{I}$ with $e_{K} \in \fG$.  
\end{lemma}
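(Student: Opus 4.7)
The plan is to transfer one of the two $ad$ operators from the left argument of $\overline{Q}$ onto the right argument using the skew-symmetry established in Lemma 6.7, and then reduce to the orthogonality of the distinguished basis $\fB'$ with respect to $\overline{Q}$ proved in Proposition 1.3. No delicate computation should be required.

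First I would apply Lemma 6.7 with $\xi = e_I \in \fG$ to rewrite
\[
\overline{Q}(ad~e_I \circ ad~e_J(e_K), e_K) \;=\; -\,\overline{Q}(ad~e_J(e_K), ad~e_I(e_K)).
\]
Next I would use Lemma 4.2 to analyze each factor on the right. For any multi-index $M \in \fI$ and any $K \in \fI$, we have $e_M e_K = \pm e_K e_M = \lambda_{MK}\, e_{N}$, where $N = (M \cup K) - (M \cap K)$ and $\lambda_{MK}$ is a nonzero element of $F$. Consequently $ad~e_M(e_K) = e_M e_K - e_K e_M$ equals either $0$ (when the two elements commute) or $2\lambda_{MK}\, e_N$ (when they anticommute). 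Applying this with $M = I$ and $M = J$ shows that both $ad~e_I(e_K)$ and $ad~e_J(e_K)$ are scalar multiples of basis elements $e_{L_I}$ and $e_{L_J}$, where $L_I = (I \cup K) - (I \cap K)$ and $L_J = (J \cup K) - (J \cap K)$.

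If either $ad$-image vanishes, the claim is immediate. In the remaining case, the right-hand side above is a scalar multiple of $\overline{Q}(e_{L_J}, e_{L_I})$, which by assertion 1) of Proposition 1.3 vanishes unless $L_I = L_J$. Since symmetric difference with $K$ is an involution on subsets of $\{1, \dots, n\}$, the equality $L_I = L_J$ would force $I = J$, contradicting our hypothesis $I \neq J$. Therefore $\overline{Q}(e_{L_I}, e_{L_J}) = 0$ and the lemma follows. I do not anticipate a genuine obstacle; the argument is essentially bookkeeping once Lemma 6.7 is in hand, the only mild subtlety being to remember that the symmetric difference operation with a fixed set is an involution, which is precisely what rules out any accidental coincidence when $I \neq J$.
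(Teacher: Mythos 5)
Your proof is correct and follows essentially the same strategy as the paper: both begin by applying Lemma 6.7 to shift one $ad$ across $\overline{Q}$, and both ultimately reduce to the vanishing of $\overline{Q}$ on distinct monomial basis elements. The one difference is cosmetic but pleasant: you use Lemma 4.2 to collapse each commutator $ad~e_M(e_K)$ to a scalar multiple of a \emph{single} basis element $e_{M \triangle K}$ (or zero) \emph{before} pairing, then invoke the orthogonality of $\fB'$ from Proposition 1.3 together with the observation that symmetric difference with $K$ is an involution; the paper instead expands $\overline{Q}(e_J e_K - e_K e_J,\, e_I e_K - e_K e_I) = p(\cdot \, c(\cdot))$ into four terms and checks that each is a nonzero multiple of $p(e_J e_I) = 0$. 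The two computations are the same modulo packaging, and your version is a bit cleaner bookkeeping.
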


\begin{proof}  $\overline{Q}(ad~e_{I} \circ ad~e_{J}(e_{K} ), e_{K}) = - \overline{Q}(ad~e_{J}(e_{K}), ad~e_{I}(e_{K}))$ by Lemma 6.7.  By Proposition 4.1 and Lemma 6.4 we obtain $\overline{Q}(ad~e_{J}(e_{K}), ad~e_{I}(e_{K})) = \overline{Q}(e_{J}e_{K} - e_{K} e_{J} , e_{I} e_{K} - e_{K} e_{I}) = p([e_{J}e_{K} - e_{K} e_{J}], c([e_{I}e_{K} - e_{K} e_{I}])) = p([e_{J}e_{K} - e_{K} e_{J}], [c(e_{K})c(e_{I}) - c(e_{I})c(e_{K})]) = \alpha_{1} + \alpha_{2} + \alpha_{3} + \alpha_{4}$, where $\alpha_{i} = \lambda_{i} p(e_{J} e_{I})$ for some nonzero $\lambda_{i} \in F$.  However, $p(e_{I} e_{J}) = 0$ since $I \neq J$.
\end{proof}

\noindent  We now complete the proof of the Proposition. Let $K \in \frak{I}$ with $e_{K} \in \fG$.   Since $(ad~ e_{I} \circ ad~e_{J})(e_{K}) \in \fG$ we may write $(ad e_{I} \circ ad~e_{J})(e_{K}) = \sum_{e_{L} \in \fG} A_{IJK}^{L} e_{L}$ for some constants $A_{IJK}^{L} \in F$.   Hence $B(e_{I} , e_{J}) = \sum_{e_{K} \in \fG} A_{IJK}^{K}$.  It suffices to show that $A_{IJK}^{K} = 0$ if $e_{K} \in \fG$.  Recall that $\{1, e_{I}, I \in \frak{I} \}$ is a $\overline{Q}$ - orthogonal basis for $C\ell(V,Q)$ by Proposition 1.3.  Hence by Lemma 6.8 we have $0 = \overline{Q}(ad~e_{I} \circ ad~e_{J}(e_{K}) , e_{K}) = \overline{Q}(e_{K}, e_{K}) A_{IJK}^{K}$.  Note that  $\overline{Q}(e_{K}, e_{K}) \neq 0$ by Proposition 1.3.  We conclude that $A_{IJK}^{K} = 0$ if $I \neq J$ and $e_{K} \in \fG$.  

\begin{proposition} Let $\{e_{1}, ... , e_{n} \}$ be a Q-orthogonal basis of V. Let $I,K \in \frak{I}$ with $e_{I}, e_{K} \in \fG$.  Then $(ad~e_{I})^{2}(e_{K}) = \lambda e_{K}$, where $\lambda = 0$ or $4e_{I}^{2} \in F$. 
\end{proposition}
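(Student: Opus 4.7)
The plan is to exploit the key structural property of monomials established in Lemma 4.2: any two basis monomials $e_{I}, e_{K}$ either commute or anticommute, and $e_{I}^{2}\in F$. This reduces the computation of $(ad\,e_{I})^{2}(e_{K})$ to one of two easy cases.

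First I would observe, via Lemma 4.2, that either $e_{I}e_{K} = e_{K}e_{I}$ or $e_{I}e_{K} = -e_{K}e_{I}$. In the commuting case, $ad\,e_{I}(e_{K}) = e_{I}e_{K} - e_{K}e_{I} = 0$, and therefore $(ad\,e_{I})^{2}(e_{K}) = 0$, giving $\lambda = 0$. This handles the first alternative of the conclusion.

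In the anticommuting case, $ad\,e_{I}(e_{K}) = 2\,e_{I}e_{K}$. Applying $ad\,e_{I}$ again gives
\[
(ad\,e_{I})^{2}(e_{K}) = 2\bigl(e_{I}\cdot e_{I}e_{K} - e_{I}e_{K}\cdot e_{I}\bigr) = 2\bigl(e_{I}^{2}e_{K} - e_{I}e_{K}e_{I}\bigr).
\]
Now I would rewrite $e_{I}e_{K}e_{I}$ by pushing the rightmost $e_{I}$ past $e_{K}$ using the anticommutation relation, obtaining $e_{I}e_{K}e_{I} = -e_{I}\cdot e_{I}e_{K} = -e_{I}^{2}e_{K}$. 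Since $e_{I}^{2}\in F$ by part 1 of Lemma 4.2, the scalar $e_{I}^{2}$ commutes with $e_{K}$, and the two terms combine to give $(ad\,e_{I})^{2}(e_{K}) = 4\,e_{I}^{2}\,e_{K}$, so $\lambda = 4e_{I}^{2}\in F$.

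There is essentially no obstacle here; the only subtle point is to notice that the hypothesis $e_{I}, e_{K}\in\fG$ plays no direct role in the algebraic manipulation itself — the dichotomy comes purely from Lemma 4.2 and the fact that $e_{I}^{2}$ is a scalar. The hypothesis $e_{K}\in \fG$ is only needed so that the conclusion fits into the framework of the Killing-form computation that will follow (together with Proposition 6.6, this will diagonalize $B$ in the basis of monomials lying in $\fG$).
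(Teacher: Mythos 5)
Your proof is correct, and it is genuinely simpler and more direct than the one given in the paper. You go straight to the commute/anticommute dichotomy from part 2) of Lemma~4.2 and just compute $(ad\,e_I)^2(e_K)$ in each case; the whole thing is a two-line verification. The paper instead proceeds in two stages: first it proves that $e_K$ is an eigenvector of $(ad\,e_I)^2$ by a $\overline{Q}$-orthogonality argument (expanding $(ad\,e_I)^2(e_K)$ in the $\overline{Q}$-orthogonal basis $\{e_J : e_J \in \fG\}$ and using Lemma~6.7 together with the reasoning of Lemma~6.8 to kill the off-diagonal coefficients), and only then identifies the possible eigenvalues by writing $(ad\,e_I)^2 = 2e_I^2\,\mathrm{Id} - 2L_{e_I}\circ R_{e_I}$ and using $(L_{e_I}\circ R_{e_I})^2 = e_I^4\,\mathrm{Id}$ to pin the eigenvalues of $L_{e_I}\circ R_{e_I}$ down to $\pm e_I^2$. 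Your route avoids section~4's orthogonality machinery entirely, and you correctly observe that this makes the hypothesis $e_I, e_K \in \fG$ superfluous for the algebraic identity itself. The one thing the paper's more abstract eigenvalue argument buys is that it does not require noticing the sign dichotomy up front; it discovers it from the identity $(L_{e_I}\circ R_{e_I})^2 = e_I^4\,\mathrm{Id}$. But since Lemma~4.2 already hands you the dichotomy, your approach is the natural one.
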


\begin{corollary}  Let $\{e_{1}, ... , e_{n} \}$ be a Q-orthogonal basis of V. Let $I,J \in \frak{I}$ with $e_{I},e_{J} \in \fG$.  Then 

	1)  $B(e_{I} , e_{J}) = \overline{Q}(e_{I} , e_{J}) = 0$ if $I \neq J$.

	2) $B(e_{I} , e_{I}) = 4 m_{I} e_{I}^{2}$, where $m_{I}$ is the number of $e_{K}$ in $\fG$ such that $(ad~e_{I})^{2}(e_{K})$ is nonzero.
	
	3)  $\overline{Q}(e_{I} , e_{I}) = - e_{I}^{2}$.
\end{corollary}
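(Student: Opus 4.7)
The plan is to assemble this corollary directly from results already established in Section 6 (and earlier), since each of the three conclusions is essentially a repackaging of a previous proposition or lemma.

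For part (1), the vanishing of $\overline{Q}(e_I, e_J)$ for $I \neq J$ is immediate from assertion 1) of Proposition 1.3, which says the basis $\fB' = \{1\} \cup \{e_I : I \in \fI\}$ is $\overline{Q}$-orthogonal; the elements $e_I, e_J \in \fG$ are in particular distinct members of $\fB'$. The vanishing $B(e_I, e_J) = 0$ is just the conclusion of Proposition 6.6.

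For part (2), the strategy is to compute the trace of $ad~e_I \circ ad~e_I : \fG \to \fG$ in a $\overline{Q}$-orthogonal basis of $\fG$. By Lemma 6.5, the set $\{e_K : e_K \in \fG\}$ is such a basis. By Proposition 6.9, for each such $K$ we have $(ad~e_I)^2(e_K) = \lambda_K \, e_K$ where $\lambda_K \in \{0, 4e_I^2\}$. Thus $ad~e_I \circ ad~e_I$ is diagonal in this basis, its diagonal entry at $e_K$ is $4e_I^2$ exactly when $(ad~e_I)^2(e_K) \neq 0$, and summing gives $B(e_I, e_I) = 4 m_I e_I^2$ with $m_I$ as defined.

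For part (3), since $e_I \in \fG$ we have $c(e_I) = -e_I$ by definition of $\fG$. Then
\[
\overline{Q}(e_I, e_I) = p(e_I \cdot c(e_I)) = -p(e_I^2) = -e_I^2,
\]
where the last equality uses Lemma 4.2 part 1), which tells us $e_I^2 \in F$, so $p(e_I^2) = e_I^2$.

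There is no real obstacle here: the work has all been done in Propositions 1.3, 6.6, 6.9 and Lemmas 4.2, 6.5. The main thing to be careful about is invoking Lemma 6.5 to justify that $\{e_K : e_K \in \fG\}$ is an $F$-basis of $\fG$ (so that the trace may legitimately be computed as the sum of diagonal entries in this basis) and noting that Proposition 6.9 gives a diagonal action, so no off-diagonal contributions to the trace arise.
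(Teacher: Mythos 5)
Your proposal is correct and follows essentially the same route as the paper: part (1) from Propositions 1.3 and 6.6, part (2) via Proposition 6.9 and the trace over the $\overline{Q}$-orthogonal eigenbasis $\{e_K : e_K \in \fG\}$ (you are slightly more explicit than the paper in citing Lemma 6.5 for this), and part (3) by the direct computation $\overline{Q}(e_I,e_I) = p(e_I \cdot c(e_I)) = -e_I^2$ using Lemma 4.2.
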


\begin{proof}  We first prove the corollary.  Assertion 1) follows from Propositions 6.6 and 1.3. We prove 2).  If $e_{K} \in \fG$, then by Proposition 6.9 $(ad~e_{I})^{2}(e_{K}) = \lambda_{K} e_{K}$, where $\lambda_{K} = 0$ or $4e_{I}^{2}$.  Hence $B(e_{I}, e_{I}) = trace~(ad~e_{I})^{2} = \sum_{e_{K} \in \fG} \lambda_{K}$, which completes the proof of 2).  To prove 3) we compute $\overline{Q}(e_{I} , e_{I}) = p(e_{I} c(e_{I})) = p(- e_{I}^{2}) = - e_{I}^{2}$ since $e_{I}^{2} \in F$ by Lemma 4.2.
\newline 

\noindent  We now prove the Proposition.  We show first that $e_{K}$ is an eigenvector of $(ad~e_{I})^{2}$.  Write $(ad~e_{I})^{2}(e_{K}) = \sum_{e_{J} \in \fG} A_{IK}^{J} e_{J}$ for suitable elements $A_{IK}^{J} \in F$. It suffices to show that $A_{IK}^{J} = 0$ if $e_{J} \in \fG, J \neq K$.  By Lemma 6.7 and the argument used in the proof of Lemma 6.8 we have $ \overline{Q}((ad~e_{I})^{2}(e_{K}), e_{J}) = - \overline{Q}(ad~e_{I}(e_{K}), ad~e_{I}(e_{J})) = \lambda~ p(e_{J} e_{K}) = 0$ if $J \neq K$.  On the other hand,  $ \overline{Q}((ad~e_{I})^{2}(e_{K}), e_{J}) = A_{IK}^{J} \overline{Q}(e_{J}, e_{J})$ and $\overline{Q}(e_{J} , e_{J})$ is nonzero.  We conclude that $A_{IK}^{J} = 0$ if $e_{J} \in \fG, J \neq K$.
\newline

\noindent  We have shown that $(ad~e_{I})^{2}(e_{K}) = \lambda_{K} e_{K}$ for some $\lambda_{K} \in F$ if $e_{I}, e_{K} \in \fG$.  Note again that $ad~e_{I} = L_{e_{I}} - R_{e_{I}}$ and hence $(ad~e_{I})^{2} = (L_{e_{I}})^{2} + (R_{e_{I}})^{2} - 2 L_{e_{I}} \circ R_{e_{I}} = (2e_{I}^{2})~Id - 2 L_{e_{I}} \circ R_{e_{I}}$ since $e_{I}^{2} \in F$ by Lemma 4.2.  Hence $e_{K}$ is an eigenvector of $(e_{I})^{2}~Id - \frac{1}{2} (ad~e_{I})^{2} = L_{e_{I}} \circ R_{e_{I}}$.  Now $(L_{e_{I}} \circ R_{e_{I}})^{2} = e_{I}^{4}~Id$, where $0 \neq e_{I}^{2} \in F$,  so the only eigenvalues of $ L_{e_{I}} \circ R_{e_{I}}$ are $e_{I}^{2}$ and $- e_{I}^{2}$.  If $(L_{e_{I}} \circ R_{e_{I}})(e_{K}) = e_{I}^{2}~e_{K}$, then $(ad~e_{I})^{2}(e_{K}) = 2e_{I}^{2} e_{K} - 2e_{I}^{2} e_{K} = 0$.  If $(L_{e_{I}} \circ R_{e_{I}})(e_{K}) = - e_{I}^{2}~e_{K}$, then $(ad~e_{I})^{2}(e_{K}) = 2e_{I}^{2} e_{K} +2e_{I}^{2} e_{K} = 4e_{I}^{2} e_{K}$.
\end{proof}

\begin{proposition}  Let $e_{I} \in \fG$ for some $I \in \frak{I}$.  Then $B(e_{I},e_{I}) = 0 \Leftrightarrow e_{I} = \omega = e_{1} e_{2} ... e_{n}$ and $\omega \in \fZ(V,Q)$. 
\end{proposition}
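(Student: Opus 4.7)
The plan is to use Corollary 6.10, which gives $B(e_I, e_I) = 4 m_I e_I^2$ with $m_I$ equal to the number of basis monomials $e_K \in \fG$ for which $(ad~e_I)^2(e_K) \neq 0$. Since $e_I^2$ is a nonzero scalar by Lemma 4.2 and the characteristic of $F$ is not $2$, the vanishing of $B(e_I, e_I)$ should be equivalent to the statement that $e_I$ commutes with every basis monomial of $\fG$, and from there the rigidity result of Lemma 5.3 will funnel $e_I$ into $\fZ(V,Q)$.

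The reverse implication is immediate: if $e_I = \omega \in \fZ(V,Q)$, then $ad~\omega \equiv 0$ on $C\ell(V,Q)$, so $B(\omega,\omega) = 0$. For the forward implication, I would assume $B(e_I, e_I) = 0$ and first deduce $m_I = 0$ from $4 m_I e_I^2 = 0$. Then I will unpack the dichotomy buried in the proof of Proposition 6.9: by Lemma 4.2 any two basis monomials either commute or anticommute, and the proof of Proposition 6.9 shows that anticommutation forces $(ad~e_I)^2(e_K) = 4 e_I^2 e_K \neq 0$. So $m_I = 0$ is equivalent to $e_I$ commuting with every $e_K \in \fG$.

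Next I will exploit the fact that each singleton $e_k$ lies in $\fG$: since $|\{k\}| = 1 \equiv 1 \pmod 4$, Lemma 6.4 gives $c(e_k) = -e_k$. Consequently $e_I$ commutes with $e_1, \ldots, e_n$, and since these generate $C\ell(V,Q)$ by 2) of Corollary 1.2, we get $e_I \in \fZ(V,Q)$. Lemma 5.3 then forces $e_I = \omega$ with $n$ odd, and $\omega \in \fZ(V,Q)$ by Proposition 5.1, closing the loop.

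The one delicate step I foresee is the passage from $4 m_I e_I^2 = 0$ in $F$ to the integer vanishing $m_I = 0$. This is automatic in characteristic zero, which covers all of the paper's applications over $\br$; in positive characteristic one would invoke the bound $m_I \leq 2^n - 1$ from Proposition 1.3 to confine the problem to a small finite set of primes. Apart from this mild caveat, every step in the chain above is a direct citation of structural results already established in Sections 1, 4, 5, and 6.
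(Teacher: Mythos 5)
Your proof is correct and follows essentially the same route as the paper: both reduce $B(e_I,e_I)=0$ to the statement that $e_I$ commutes with every basis monomial $e_K \in \fG$ (your condition $m_I=0$ is exactly the paper's $(ad~e_I)^2 = 0$ on $\fG$, and both rest on the commute/anticommute dichotomy of Lemma 4.2 / Proposition 6.9), the only variation being that you pass directly to $\fZ(V,Q)$ via the generators $e_1, \ldots, e_n \in \fG$ where the paper detours through $\fZ(\fG)$ and Proposition 6.2. The characteristic caveat you flag is genuine and is present tacitly in the paper's proof as well; just note that bounding $m_I$ by $2^n-1$ does not actually close the gap, since an odd prime $p$ dividing $m_I$ would still defeat the deduction $4m_I e_I^2=0 \Rightarrow m_I=0$, so neither proof fully covers positive characteristic as written.
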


\begin{proof}  If $e_{I} = \omega \in \fZ(V,Q)$, then $B(e_{I},e_{I}) = trace~(ad~e_{I})^{2} = 0$.  Conversely, suppose that $B(e_{I},e_{I})  = 0$ for some $I \in \frak{I}$ with $e_{I} \in \fG$. Then $(ad~e_{I})^{2} = 0$ on $\fG$ by 2) of Corollary 6.10.  By Propositions 5.1 and  6.2 it suffices to show that $e_{I} \in \fZ(\fG)$.
\newline

\noindent  Let $J \in \frak{I}$ with $e_{J} \in \fG$ be given.  We show that $ad~e_{I}(e_{J}) = 0$, and we may assume that $I \neq J$.  By Lemma 4.2   $e_{J} e_{I} = \lambda e_{I} e_{J}$, where $\lambda = \pm 1$. 
\newline

\noindent  We compute $0 = (ad~e_{I})^{2}(e_{J}) = e_{I}(e_{I} e_{J} - e_{J} e_{I}) - (e_{I} e_{J} - e_{J} e_{I}) e_{I} = e_{I}^{2} e_{J} - 2 e_{I} e_{J} e_{I} + e_{J} e_{I}^{2} = 2 e_{I}^{2} e_{J} - 2 e_{I}(\lambda e_{I} e_{J}) = (2 - 2 \lambda) e_{I}^{2} e_{J}$.  It follows that $\lambda = 1$ since $e_{I}^{2} \neq 0$, and we conclude that $e_{J} e_{I} = e_{I} e_{J}$  or $ad~e_{I}(e_{J}) = 0$.
\end{proof} 

\begin{proposition}  There exists an ideal $\fH$ of $\fG$ such that $\fG = \fZ(\fG) \oplus \fH$ and the restriction of B to $\fH$ is nondegenerate.  If dim V $ \equiv 1~(mod~4)$, then $\fH$ has codimension 1 in $\fG$ but otherwise $\fG = \fH$.
\end{proposition}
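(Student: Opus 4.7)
The plan is to construct $\fH$ explicitly as the span of a subset of the $B$-orthogonal basis of $\fG$ given by Lemma 6.5, exploiting the explicit zero-set of $B$ on this basis computed in Proposition 6.11.  Write $\fS = \{e_I : e_I \in \fG\}$; by Lemma 6.5 this is a basis of $\fG$, by Corollary 6.10 its elements are mutually $B$-orthogonal, and by Proposition 6.11 the only element of $\fS$ on which $B$ vanishes is $\omega$, and only when $\omega \in \fZ(V,Q)$.

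I would first dispose of the case $n \not\equiv 1~(mod~4)$.  Here Corollary 6.3 gives $\fZ(\fG) = \{0\}$.  Moreover $\omega \in \fG$ requires $c(\omega) = -\omega$, hence $n \equiv 1$ or $2~(mod~4)$ by Lemma 6.4, while $\omega \in \fZ(V,Q)$ requires $n$ odd by Proposition 5.1; these conditions coincide only when $n \equiv 1~(mod~4)$.  So every $e_I \in \fS$ satisfies $B(e_I, e_I) \neq 0$ by Proposition 6.11, and $B$ is nondegenerate on $\fG$.  Take $\fH = \fG$.

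Next, for $n \equiv 1~(mod~4)$, both conditions hold, so $\omega \in \fS$, $\fZ(\fG) = F\omega$ by Corollary 6.3, and $B(\omega,\omega) = 0$.  Set $\fH = F$-span$(\fS \setminus \{\omega\})$, a codimension-$1$ linear complement to $\fZ(\fG)$ in $\fG$.  Since its basis $\fS \setminus \{\omega\}$ is $B$-orthogonal with each vector having nonzero $B$-norm by Proposition 6.11, $B|_{\fH}$ is nondegenerate.

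The main obstacle is verifying that this $\fH$ is an ideal.  Since $\omega$ is central, $[\fG,\fH] = [\fH,\fH]$, so it suffices to show $\fH$ is closed under brackets.  For $e_I, e_J \in \fS \setminus \{\omega\}$, Lemma 4.2 yields $[e_I, e_J] \in \{0,\, \pm 2\lambda_{IJ} e_K\}$ with $K = (I \cup J) - (I \cap J)$; and since $c$ maps $\fG$ into $\fG$, $[e_I, e_J] \in \fG$, forcing $[e_I, e_J] = 0$ or $e_K \in \fS$.  The sole remaining danger is $e_K = \omega$, i.e.\ $I$ and $J$ complementary in $\{1, \ldots, n\}$, whence $n = |I| + |J|$.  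But Lemma 6.4 forces $|I|, |J| \equiv 1$ or $2~(mod~4)$, so $n~(mod~4) \in \{0, 2, 3\}$, contradicting $n \equiv 1~(mod~4)$.  Hence $e_K \neq \omega$, $[e_I, e_J] \in \fH$, and $\fH$ is an ideal.
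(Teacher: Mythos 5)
Your construction of $\fH$ is the same as the paper's: take $\fH = \fG$ when $\fZ(\fG)=\{0\}$, and $\fH = F$-span$\{e_I \in \fS : e_I \neq \omega\}$ when $n \equiv 1 \pmod 4$. The nondegeneracy of $B|_{\fH}$ follows in both treatments from Corollary 6.10 and Proposition 6.11.

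Where you diverge is in showing that $\fH$ is an ideal. The paper argues via the bilinear form: using that $ad\,e_J$ is $\overline{Q}$-skew (Lemma 6.7) and that $\omega$ is $\overline{Q}$-orthogonal to $\fH$, it computes the $\omega$-component of $ad\,e_J(e_I)$ to be $\lambda\,\overline{Q}(\omega,\omega)$ for some scalar $\lambda$, and concludes $\lambda=0$ since $\overline{Q}(\omega,\omega)\neq 0$. You instead argue combinatorially: $[e_I,e_J]$ is either zero or a scalar multiple of $e_K$ with $K$ the symmetric difference of $I$ and $J$ (Lemma 4.2), and $e_K=\omega$ would force $I,J$ disjoint with $|I|+|J|=n$; with $|I|,|J|\equiv 1$ or $2\pmod 4$ this gives $n\equiv 0,2,3\pmod 4$, contradicting $n\equiv 1\pmod 4$. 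Both arguments are correct. Yours is somewhat more elementary in that it avoids Lemma 6.7 entirely and works directly with the index calculus of monomials, whereas the paper's orthogonality argument is more structural and would transfer to a setting where one didn't have such an explicit monomial basis at hand.

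One small point of hygiene: when you invoke Lemma 4.2 for $[e_I,e_J]$ you should also note that $I\neq J$ can be assumed (the diagonal bracket vanishes trivially), since Lemma 4.2's part (2) is stated for $I\neq J$. And your parenthetical ``since $c$ maps $\fG$ into $\fG$'' as the reason $[e_I,e_J]\in\fG$ is a slightly indirect way to say that $\fG$ is closed under brackets because $c$ is an anti-automorphism; it is correct, but stating it as the Lie-algebra closure of $\fG$ would be cleaner.
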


\begin{proof}   We prove the first assertion.  Assuming that $\fH$ exists, it follows from Corollary 6.10 and Proposition 6.11 that B is nondegenerate on $\fH$.  If $\fZ(\fG) = \{0 \}$, then set $\fH = \fG$.   If $\fZ(\fG) \neq \{0 \}$, then $\fZ(\fG) = F-span \{ \omega\}$ by Corollary 6.3.  In this case let $\fH = F-span \{e_{I} : e_{I} \in \fG~\rm{and}~e_{I} \neq \omega \}$.  Clearly $\fG = \fZ(\fG) \oplus \fH$, so it remains only to prove that $\fH$ is an ideal of $\fG$.   
\newline

\noindent It suffices to consider the case that $\fZ(\fG) \neq \{ 0\}$. Let $I,J \in \frak{I}$ with $e_{I} \in \fH, e_{J} \in \fG$ and $e_{J} \neq \omega$ be given.  It suffices to prove that $ad~\omega(e_{I}) \in \fH$ and $ad~e_{J}(e_{I}) \in \fH$.  The first assertion is obviously true since $\omega \in \fZ(\fG)$.  To prove the second assertion write $ad~e_{J}(e_{I}) = \xi + \lambda \omega$ for some $\xi \in \fH$ and some $\lambda \in F$.  This is possible since $\fG = \fH \oplus \fZ(\fG)$ and  $ad~e_{J}(e_{I}) \in \fG$.  We need to show that $\lambda = 0$.  By Corollary 6.10  and the definition of $\fH$ we obtain $\overline{Q}(\omega, \xi) = 0$.  We now compute $0 = - \overline{Q}(e_{I} , ad~e_{J}(\omega)) = \overline{Q}(ad~e_{J}(e_{I}) , \omega) = \overline{Q}(\xi , \omega) + \lambda \overline{Q}(\omega, \omega) =  \lambda \overline{Q}(\omega, \omega)$.  It follows that $\lambda = 0$ since $\overline{Q}(\omega , \omega) \neq 0$ by assertion 3) of Corollary 6.10.
\newline

\noindent  The argument above proves the first assertion of the Proposition.  The second assertion follows immediately from the first and from Corollary 6.3.
\end{proof}

\section{Quaternion linear algebra}

\noindent  Let $\bh$ denote the quaternions, and let $\bh^{n}$ denote the space of n-tuples of quaternions.  The space $\bh^{n}$ is an $\bh$-module, where the elements of $\bh$ act by multiplication on the left.  Let $M(n,\bh)$ denote the n x n matrices with entries in $\bh$.  Linear algebra in $M(n, \bh)$ is slightly more complicated than in $M(n,\bc)$ or $M(n,\br)$ since $\bh$ is noncommutative.  Some concepts, such as the determinant, don$^\prime$t exist in $M(n,\bh)$.  A good reference for this section is [R].
\newline

\noindent  If $A = (A_{ij}) \in M(n,\bh)$ and $x = (x_{1}, ... , x_{n}) \in \bh^{n}$, then we define $A (x) = (y_{1}, ... , y_{n})$, where $y_{i} = \sum_{k=1}^{n} x_{k} A_{ik}$.  For elements $A,B \in M(n,\bh)$ we define $A \cdot B \in M(n,\bh)$ by $(A \cdot B)_{ij} = \sum_{k=1}^{n} B_{kj} A_{ik}$.  For real or complex matrices this is just the usual definition of matrix multiplication.   It follows that $M(n,\bh)$ is an algebra.  Next, define the metric adjoint operation $^{*}$ in $M(n,\bh)$ by $A^{*}_{ij} = \overline{A_{ji}}$, where $x \rightarrow \overline{x}$  denotes conjugation in $\bh$. 
\newline

\noindent  Define $\br$-linear maps I,J,K on $\bh^{n}$ by $I(x) = i x, J(x) = j x$ and $K(x) = k x$.  It follows from Lemma 7.3 below that the elements of $M(n,\bh)$ commute with I,J and K.
\newline

\noindent  Let $GL(n,\bh)$ denote the set of invertible elements of $M(n,\bh)$.  It is evident that $GL(n,\bh)$ is a group.  It is also known that $GL(n,\bh)$ is a dense open subset of $M(n,\bh)$ (see for example part b) of Proposition 5.10 of [ R]).
\newline

\noindent It is straightforward to verify the first three of  the following statements.

\begin{lemma}  Let $A,B \in M(n,\bh)$ and let $x \in \bh^{n}$.  Then $(A \cdot B) x = A(B x)$
\end{lemma}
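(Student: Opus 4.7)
The plan is to verify the identity by a direct computation from the two unusual definitions, being very careful about the order of multiplication since $\bh$ is noncommutative. The whole point of the author's conventions --- scalars on the left in the formula $y_i = \sum_k x_k A_{ik}$ and the reversed order $(A\cdot B)_{ij} = \sum_k B_{kj} A_{ik}$ --- is precisely to make associativity work without commutativity, so I expect the proof to be purely mechanical.

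First I would fix notation. Let $y = Bx \in \bh^n$, so that by definition $y_\ell = \sum_{k=1}^n x_k B_{\ell k}$. Then I would compute $A(Bx) = A(y)$ component by component using the definition of the action: the $i$-th component is $\sum_{\ell=1}^n y_\ell A_{i\ell}$. Substituting the formula for $y_\ell$ gives
\[
(A(Bx))_i = \sum_{\ell=1}^n \Bigl(\sum_{k=1}^n x_k B_{\ell k}\Bigr) A_{i\ell} = \sum_{k=1}^n \sum_{\ell=1}^n (x_k B_{\ell k}) A_{i\ell},
\]
where the rearrangement is justified by the distributivity of quaternion multiplication over addition. By associativity in $\bh$, each term $(x_k B_{\ell k}) A_{i\ell}$ equals $x_k (B_{\ell k} A_{i\ell})$, which can be resummed over $\ell$ first to yield $x_k \sum_{\ell} B_{\ell k} A_{i\ell} = x_k (A\cdot B)_{ik}$ by the definition of the product in $M(n,\bh)$. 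Summing over $k$ then gives $\sum_{k=1}^n x_k (A\cdot B)_{ik}$, which is exactly the $i$-th component of $(A\cdot B)(x)$ by the definition of the action of $M(n,\bh)$ on $\bh^n$.

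The only potential obstacle is keeping the left/right positions of scalars straight; the unusual placement of $x_k$ on the left in $y_i = \sum x_k A_{ik}$ and of $A_{ik}$ on the right in $(A\cdot B)_{ij} = \sum B_{kj} A_{ik}$ is precisely what allows the associativity of $\bh$ to match up the two triple sums without any hidden use of commutativity. Once the bookkeeping is correct, no further argument is needed.
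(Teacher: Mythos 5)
Your verification is correct. The paper explicitly omits the proof of Lemma 7.1, remarking only that the first three lemmas of Section 7 are ``straightforward to verify,'' and your direct unwinding of the two definitions --- tracking carefully that $x_k$ sits on the left and $A_{i\ell}$ on the right so that associativity of $\bh$ does the work without any commutativity --- is exactly the intended computation.
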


\begin{lemma}  Let $A,B \in M(n,\bh)$.  Then $(A \cdot B)^{*} = B^{*} \cdot A^{*}$.
\end{lemma}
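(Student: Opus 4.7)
The plan is to verify the identity entrywise, using only the three ingredients available: the author's definition of the product, namely $(A\cdot B)_{ij}=\sum_{k=1}^{n} B_{kj}A_{ik}$; the definition of the adjoint, $A^{*}_{ij}=\overline{A_{ji}}$; and the fact that quaternionic conjugation is an anti-involution, i.e.\ $\overline{xy}=\overline{y}\,\overline{x}$ and $\overline{x+y}=\overline{x}+\overline{y}$ for all $x,y\in\bh$.

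First I would expand the left-hand side. By the definition of $^{*}$ and then of $\cdot$,
\[
\bigl((A\cdot B)^{*}\bigr)_{ij}
   \;=\;\overline{(A\cdot B)_{ji}}
   \;=\;\overline{\sum_{k=1}^{n} B_{ki}A_{jk}}
   \;=\;\sum_{k=1}^{n}\overline{B_{ki}A_{jk}}
   \;=\;\sum_{k=1}^{n}\overline{A_{jk}}\;\overline{B_{ki}},
\]
where the last step uses the anti-involution property of conjugation. Next I would expand the right-hand side similarly:
\[
(B^{*}\cdot A^{*})_{ij}
   \;=\;\sum_{k=1}^{n} A^{*}_{kj}\,B^{*}_{ik}
   \;=\;\sum_{k=1}^{n}\overline{A_{jk}}\;\overline{B_{ki}}.
\]
The two expressions agree for every pair $(i,j)$, which gives the result.

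There is essentially no obstacle here beyond bookkeeping; the only subtlety is that the author's convention for $A\cdot B$ reverses the usual order of the factors inside the sum, so one must be careful to apply $\overline{xy}=\overline{y}\,\overline{x}$ in the direction that matches the author's definition of matrix multiplication rather than the standard one. Once the indices and the order of conjugates are tracked correctly, the computation is immediate.
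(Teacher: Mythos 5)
Your computation is correct and is exactly the direct entrywise verification that the paper leaves to the reader (the paper simply states that Lemmas 7.1--7.3 are "straightforward to verify" and omits the proof). You correctly track the author's nonstandard convention $(A\cdot B)_{ij}=\sum_k B_{kj}A_{ik}$ and apply the anti-involution $\overline{xy}=\overline{y}\,\overline{x}$ in the right order, so nothing is missing.
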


\begin{lemma}  Let $x \in \bh, y \in \bh^{n}$ and $A \in M(n,\bh)$.  Then $A(xy) = x A(y)$.
\end{lemma}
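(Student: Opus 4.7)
The plan is to verify the identity componentwise, directly from the definition of the matrix action on $\bh^{n}$. The key observation is that in the convention set up just before Lemma 7.1, the entries of $A$ multiply the coordinates of $y$ \emph{on the right}, whereas the scalar $x$ multiplies the vector $y$ \emph{on the left}. Since the quaternions are associative, these two operations should commute, and the proof should be essentially a one-line computation.

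Concretely, I would write $y = (y_{1},\ldots,y_{n})$, so that $xy = (xy_{1},\ldots,xy_{n})$ by definition of left scalar multiplication on $\bh^{n}$. Applying the definition of the matrix action, the $i$-th coordinate of $A(xy)$ is
\[
\sum_{k=1}^{n} (xy_{k}) A_{ik}.
\]
Using associativity of multiplication in $\bh$, each summand equals $x(y_{k} A_{ik})$, and then the common left factor $x$ can be pulled out of the finite sum to give
\[
x \sum_{k=1}^{n} y_{k} A_{ik},
\]
which is the $i$-th coordinate of $x A(y)$. Since this holds for every $i$, we conclude $A(xy) = x A(y)$.

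There is no real obstacle here; the content of the lemma is exactly that the authors' convention (entries of $A$ on the right, scalars of $\bh$ on the left) makes $\bh$-linearity automatic from associativity. The only thing one must be careful about is not to interchange the order of $y_{k}$ and $A_{ik}$ at any point, since $\bh$ is noncommutative; the proof uses only associativity and left-distributivity of multiplication over finite sums, both of which hold in $\bh$. This is also exactly why the statement is formulated with $x$ on the left rather than the right, and it is the formal verification that the operators $I$, $J$, $K$ defined above commute with every $A \in M(n,\bh)$.
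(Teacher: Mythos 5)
Your componentwise computation is correct and is exactly the verification the paper has in mind when it declares Lemma 7.3 ``straightforward to verify'' without writing it out. The observation that the convention (matrix entries on the right, scalars on the left) reduces the claim to associativity in $\bh$ is the whole point.
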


\begin{lemma}  Let $\fB = \{u_{1}, ... , u_{n} \}$ be an basis of $\bh^{n}$ as an $\bh$-module.  Given $A \in M(n,\bh)$ let $\fB(A) \in M(n,\bh)$ be the unique matrix such that $A(u_{i}) = \sum_{r=1}^{n} \fB(A)_{ri}~ u_{r}$.  Then the map $\fB : M(n,\bh) \rightarrow M(n,\bh)$ is an algebra isomorphism.
\end{lemma}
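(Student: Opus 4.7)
\noindent The plan is to realize the map $\fB$ as conjugation by an invertible matrix, from which every algebra-isomorphism property follows at once. Throughout I will work with the standard basis $\{e_{1}, \ldots, e_{n}\}$ of $\bh^{n}$. From the definition $A(x)_{i} = \sum_{k} x_{k} A_{ik}$ one sees immediately that $A(e_{i}) = \sum_{r} A_{ri}\, e_{r}$ for every $A \in M(n,\bh)$; that is, the matrix of $A$ with respect to the standard basis is $A$ itself.

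\noindent First I would construct the change-of-basis matrix. Let $P \in M(n,\bh)$ be the matrix with entries $P_{ki} = (u_{i})_{k}$, the $k$-th component of $u_{i}$. A direct computation from the action formula gives $P(e_{i}) = u_{i}$ for each $i$. Because $\{u_{1}, \ldots, u_{n}\}$ is a basis of the free left $\bh$-module $\bh^{n}$, the left $\bh$-linear map $P : \bh^{n} \to \bh^{n}$ is bijective, and its set-theoretic inverse $T$ is again left $\bh$-linear. A short check — define $P^{-1} \in M(n,\bh)$ by $(P^{-1})_{ki} = T(e_{i})_{k}$, and verify via Lemma 7.3 that $P^{-1}$ acts exactly as $T$ does on arbitrary $\bh$-linear combinations of the $e_{i}$ — shows that $T$ is realized by a matrix $P^{-1} \in M(n,\bh)$ which, by Lemma 7.1, is a two-sided inverse of $P$ in the algebra.

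\noindent Next I would establish the formula $\fB(A) = P^{-1} \cdot A \cdot P$ for every $A \in M(n,\bh)$. Applying Lemma 7.1 twice,
\[
(P^{-1} \cdot A \cdot P)(e_{i}) \;=\; P^{-1}\bigl(A(P(e_{i}))\bigr) \;=\; P^{-1}(A(u_{i})).
\]
If $A(u_{i}) = \sum_{r} \fB(A)_{ri}\, u_{r}$, then Lemma 7.3 yields
\[
P^{-1}(A(u_{i})) \;=\; \sum_{r} \fB(A)_{ri}\, P^{-1}(u_{r}) \;=\; \sum_{r} \fB(A)_{ri}\, e_{r}.
\]
Comparing with the opening observation that any matrix $M \in M(n,\bh)$ satisfies $M(e_{i}) = \sum_{r} M_{ri}\, e_{r}$, one concludes $\fB(A) = P^{-1} \cdot A \cdot P$.

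\noindent Granted this formula, the conclusion is immediate: conjugation by the invertible element $P$ is an $\br$-algebra automorphism of $M(n,\bh)$, since it preserves addition, real scalar multiples, products (by associativity of the multiplication in $M(n,\bh)$) and the identity element, and has the two-sided inverse $A \mapsto P \cdot A \cdot P^{-1}$. I expect the main obstacle to be purely bookkeeping: the multiplication convention $(A \cdot B)_{ij} = \sum_{k} B_{kj} A_{ik}$ adopted in this section is the reverse of the usual one, so throughout the argument — especially when verifying that the set-theoretic inverse of $P$ is realized by a matrix in $M(n,\bh)$, and when matching the resulting product of three matrices against the defining indices of $\fB(A)$ — one must take care that the indices line up correctly.
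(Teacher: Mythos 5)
Your proof is correct, but it takes a genuinely different route from the paper's. The paper verifies directly, by an index computation that appeals to Lemmas 7.1 and 7.3, that $\fB(A \cdot B)_{si} = (\fB(A)\cdot \fB(B))_{si}$, expanding $(A \cdot B)(u_i) = A(B(u_i))$ in the basis $\{u_r\}$ and matching coefficients, with the linear bijectivity simply asserted. You instead introduce the change-of-basis matrix $P$ with $P(e_i) = u_i$, observe that the set-theoretic inverse of a left $\bh$-linear bijection is again left $\bh$-linear and hence realized by a matrix $P^{-1} \in M(n,\bh)$ which is a two-sided inverse in the algebra by Lemma 7.1, establish $\fB(A) = P^{-1} \cdot A \cdot P$, and then note that conjugation by an invertible element is automatically an $\br$-algebra automorphism. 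Both approaches rely on the same two lemmas; yours is more conceptual, packaging all of the algebra-homomorphism axioms (plus bijectivity, which the paper waves away) into a single structural fact about conjugation, at the cost of proving that $P$ has a matrix inverse. The paper's approach is lower-tech and avoids constructing $P$ at all. One small precision worth preserving in a final writeup: the claim that every bijective left $\bh$-linear map is realized by a matrix in $M(n,\bh)$ under the given action $A(x)_i = \sum_k x_k A_{ik}$ is what licenses the existence of $P^{-1}$; you gesture at this with "a short check — define $(P^{-1})_{ki} = T(e_i)_k$, and verify via Lemma 7.3," which is the right idea and should be spelled out if this were to replace the paper's argument.
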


\begin{proof}   It is straightforward to show that $\fB$ is an $\bh$-linear isomorphism of $M(n,\bh)$ and we omit the details.  We prove that $\fB$ preserves multiplication. Let $A,B \in M(n,\bh)$ be given.  Then $ (A \cdot B)(u_{i}) = \sum_{s=1}^{n} \fB(A \cdot B)_{si}~ u_{s}$.  On the other hand by Lemmas 7.1 and 7.3 we have $(A \cdot B)(u_{i}) = A(B(u_{i})) = A(\sum_{r=1}^{n} \fB(B)_{ri}~u_{r}) = \sum_{r=1}^{n} \fB(B)_{ri}~A(u_{r}) = \sum_{r=1}^{n} \fB(B)_{ri} (\sum_{s=1}^{n} \fB(A)_{sr}~u_{s}) = \newline \sum_{s=1}^{n} (\sum_{r=1}^{n} \fB(B)_{ri} \fB(A)_{sr})~u_{s} = \sum_{s=1}^{n} (\fB(A) \cdot \fB(B))_{si}~u_{s}$.  Since $\{ u_{1}, ... , u_{n}\}$ is an $\bh$-basis for $\bh^{n}$ it follows that   $\fB(A \cdot B)_{si} =  (\fB(A) \cdot \fB(B))_{si}$ for all $ 1\leq i,s \leq n$
\end{proof}

\section{Classical Clifford algebras and matrix algebras}

\noindent Let $F = \br$ and let $Q_{1}, Q_{2} : \br^{n}  \times \br^{n} \rightarrow \br$ be symmetric, positive definite bilinear forms. By Corollary 2.2 $Q_{1}$ and $Q_{2}$ are equivalent, and $C\ell(\br^{n}, Q_{1})$ is algebra isomorphic to $C\ell(\br^{n}, Q_{2})$.  We denote this isomorphism class of Clifford algebras by $C\ell(n)$. The traditional explicit model for $C\ell(n)$ is to define Q on $\br^{n}$ by $Q(e_{i},e_{j}) = \delta_{ij}$, where $\{e_{1}, e_{2}, ... , e_{n} \}$ is the usual basis of $\br^{n}$.  In particular $e_{i}^{2} = - 1$ for $1 \leq i \leq n$.  Let $G_{n} = U(\br^{n}, Q)$ in the notation of Proposition 6.1.  If $\fG_{n} = \{\xi \in C\ell(\br^{n},Q) : c(\xi) = - \xi \}$, then $\fG_{n}$ is the Lie algebra of $G_{n}$ by the discussion in section 6.  Let $\fH_{n}$ be the ideal of $\fG_{n}$ defined in Proposition 6.12.
\newline

\begin{proposition}  The Killing form B is negative definite on $\fH_{n}$.
\end{proposition}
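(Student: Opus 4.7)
The plan is to assemble the pieces already proved in Section 6 and observe that when $Q$ is positive definite the signs work out uniformly. By Corollary 6.10 the basis $\{e_I : e_I \in \fG_n\}$ is $B$-orthogonal, and $\fH_n$ is spanned by the subset $\{e_I : e_I \in \fG_n,\ e_I \neq \omega\}$ (from Proposition 6.12), so on $\fH_n$ the Killing form is diagonal in this basis and it is enough to show $B(e_I,e_I) < 0$ for each such $e_I$.

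First I would pin down the sign of $e_I^2$ for $e_I \in \fG_n$. Using the orthonormal basis with $e_i^2 = -Q(e_i,e_i)\cdot 1 = -1$, assertion 1) of Lemma 4.2 gives
\[
e_I^2 = (-1)^{k(k-1)/2}\, e_{i_1}^2 \cdots e_{i_k}^2 = (-1)^{k(k-1)/2}(-1)^{k} = (-1)^{k(k+1)/2},
\]
where $k = |I|$. Lemma 6.4 tells us that $e_I \in \fG_n$ forces $k \equiv 1$ or $2 \pmod 4$. A short check of $k(k+1)/2 \pmod 2$ in these two residue classes shows that $k(k+1)/2$ is odd in both cases, so $e_I^2 = -1$ whenever $e_I \in \fG_n$.

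Next I would combine this with Corollary 6.10(2), which says $B(e_I,e_I) = 4 m_I\, e_I^2$ where $m_I$ counts the $e_K \in \fG_n$ on which $(ad\,e_I)^2$ is nonzero. The previous paragraph gives $B(e_I,e_I) = -4 m_I$. Proposition 6.11 identifies exactly when this vanishes: $B(e_I,e_I) = 0$ iff $e_I = \omega$ and $\omega \in \fZ(V,Q)$. Since $\fH_n$ is defined so as to exclude $\omega$, for every $e_I$ in the chosen basis of $\fH_n$ we get $m_I \geq 1$ and therefore $B(e_I,e_I) = -4 m_I < 0$.

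Finally, any $\xi \in \fH_n$ can be written uniquely as $\xi = \sum_I \xi_I e_I$ over the $e_I$'s in the basis of $\fH_n$, with $\xi_I \in \br$. Orthogonality and the negativity just established give
\[
B(\xi,\xi) = \sum_I \xi_I^2\, B(e_I,e_I) = -4\sum_I m_I\, \xi_I^2 \leq 0,
\]
with equality only when every $\xi_I = 0$. This proves $B$ is negative definite on $\fH_n$. There is no genuine obstacle; the only point that requires any care is the parity calculation showing $e_I^2 = -1$ uniformly on $\fG_n$, and that depends essentially on $Q$ being positive definite — for indefinite $Q$ the signs of the $e_i^2$ vary and this argument would break.
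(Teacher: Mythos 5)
Your proof follows the same path as the paper's: reduce to showing $e_I^2 = -1$ for the basis monomials $e_I \in \fH_n$ via Corollary 6.10(2) and Proposition 6.11, compute $e_I^2 = (-1)^{k(k+1)/2}$ from Lemma 4.2 using $e_i^2 = -1$, and invoke the parity restriction $k \equiv 1,2 \pmod 4$ from Lemma 6.4. You spell out the final quadratic-form step that the paper leaves implicit, which is fine. One small imprecision: you assert that $\fH_n$ is always spanned by $\{e_I \in \fG_n : e_I \neq \omega\}$ and later that $\fH_n$ ``is defined so as to exclude $\omega$.'' That reading of Proposition 6.12 is correct only when $\fZ(\fG_n) \neq 0$, i.e.\ $n \equiv 1 \pmod 4$. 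When $n \equiv 2 \pmod 4$ one has $\omega \in \fG_n = \fH_n$, so $\omega$ is not excluded. Your argument is nevertheless protected by the full biconditional in Proposition 6.11: in that case $\omega \notin \fZ(V,Q)$, hence $B(\omega,\omega) \neq 0$, so $m_\omega \geq 1$ and $B(\omega,\omega) = -4m_\omega < 0$ all the same. It is worth stating this case separately rather than relying on the (false) claim that $\omega$ never appears in the basis of $\fH_n$.
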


\begin{proof}  Let $I = (i_{1}, i_{2}, ... , i_{k})$ be a arbitrary multi-index in $\fI$, where $1 \leq k = |I| \leq n$.  By 2) of Corollary 6.10 and Proposition 6.11 it suffices to show that $e_{I}^{2} = -1$ for all $e_{I} \in \fH_{n}$.  By 1) of Lemma 4.2  $e_{I}^{2} = (-1)^{\frac{k(k-1)}{2}} e_{i_{1}}^{2} e_{i_{12}}^{2} ... e_{i_{k}}^{2} = (-1)^{\frac{k(k+1)}{2}}$.  The fact that $e_{I} \in \fH_{n}$ means that $k = |I| \equiv 1$ or $2~(mod~4)$ by Lemma 6.4.  In either case it follows immediately that  $e_{I}^{2} = (-1)^{\frac{k(k+1)}{2}} = -1$.
\end{proof}

\noindent  From 1) and 3) of Corollary 6.10 we now obtain

\begin{corollary}  The symmetric, bilinear form $\overline{Q}$ is positive definite on $\fH_{n}$.
\end{corollary}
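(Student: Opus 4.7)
The plan is to use exactly the same orthogonal-basis computation that proved Proposition 8.1, but read off the value of $\overline{Q}$ on the diagonal instead of the value of $B$. Concretely, I would take the basis $\{e_I : e_I \in \fH_n\}$ of $\fH_n$ and show that it is $\overline{Q}$-orthogonal with strictly positive diagonal entries.

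First I would invoke Corollary 6.10 in two ways: part 1 gives $\overline{Q}(e_I,e_J)=0$ whenever $I\neq J$ with $e_I,e_J\in\fG_n$, so the proposed basis of $\fH_n$ is automatically $\overline{Q}$-orthogonal. Part 3 gives the diagonal value $\overline{Q}(e_I,e_I)=-e_I^2$. So everything reduces to showing $e_I^2=-1$ for each $e_I\in\fH_n$.

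This is exactly the computation already carried out in the proof of Proposition 8.1. By Lemma 4.2(1), in $C\ell(n)$ with $e_i^2=-1$ one has $e_I^2=(-1)^{k(k-1)/2}(-1)^k=(-1)^{k(k+1)/2}$ where $k=|I|$. Membership $e_I\in\fH_n$ forces $c(e_I)=-e_I$, which by Lemma 6.4 means $k\equiv 1$ or $2\pmod 4$; in both cases $k(k+1)/2$ is odd, so $e_I^2=-1$. Hence $\overline{Q}(e_I,e_I)=-e_I^2=1>0$ for each basis element.

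There is no real obstacle here: the work has all been done upstream. The only thing worth stating carefully is that $\fH_n$ is indeed spanned by the $e_I$ with $e_I\in\fG_n$ and $e_I\neq \omega$ (using Proposition 6.12 and Lemma 6.5), so that the above diagonal computation covers all basis vectors of $\fH_n$ and yields positive definiteness rather than merely nondegeneracy.
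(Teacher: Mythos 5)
Your proposal is correct and matches the paper's argument: the paper derives Corollary 8.2 directly from parts 1) and 3) of Corollary 6.10 together with the computation $e_I^2 = -1$ for $e_I \in \fH_n$ carried out in the proof of Proposition 8.1, which is exactly the reduction you describe.
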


\noindent  Next we describe $C\ell(n)$ as a matrix algebra over $K = \br,\bc$ or $\bh$.  Given K and an integer n let M(n,K) denote the K-algebra of  n x n matrices with elements in K.  We list the isomorphism classes of the classical Clifford algebras.  For details see Table 1 and the discussion in Chapter 1, section 4 of  [LM].  The algebra isomorphism $C\ell(n+8) \approx C\ell(n) \otimes M(16,\br)$ plays a key role. 

\begin{proposition}  The Clifford algebras $C\ell(n)$ are algebra isomorphic to matrix algebras A as given below :
\end{proposition}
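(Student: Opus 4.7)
The plan is to exploit the universal mapping property of Proposition 1.1 as the workhorse: to prove $C\ell(n) \cong A$ for a particular matrix algebra $A$, it suffices to exhibit $\br$-linear $\sigma : \br^{n} \to A$ with $\sigma(v)^{2} = -Q(v,v)\,1$, then check that the induced algebra homomorphism $\tilde\sigma : C\ell(n) \to A$ is surjective, and finally invoke $\dim_{\br} C\ell(n) = 2^{n}$ to conclude that $\tilde\sigma$ is an isomorphism. Concretely, I would produce $\sigma(e_{1}), \dots, \sigma(e_{n}) \in A$ that pairwise anti-commute and each square to $-1$; these are the only conditions required, since by polarization anti-commutation plus $\sigma(e_{i})^{2}=-1$ is equivalent to $\sigma(v)^{2}=-Q(v,v)\cdot 1$ on the orthonormal basis.

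First I would dispose of the low-dimensional base cases $n = 1, 2, \dots, 8$ by writing down explicit anti-commuting square roots of $-1$ in the claimed target algebras. For $n=1$ the map $e_{1} \mapsto i$ identifies $C\ell(1)$ with $\bc$. For $n=2$ Example 2 already gives $C\ell(2) \cong \bh$ via $e_{1}\mapsto i,\, e_{2}\mapsto j$. For $n=3$ the element $\omega = e_{1}e_{2}e_{3}$ lies in the center by Proposition 5.1 and satisfies $\omega^{2}=1$, so the idempotents $\tfrac12(1\pm\omega)$ split $C\ell(3)$ as a product of two copies of $C\ell(2)\cong\bh$. For $n = 4,5,6$ I would exhibit the generators as specific Pauli-type $2\times 2$ matrices over $\bh$, $4\times 4$ matrices over $\bc$, and $8\times 8$ real matrices; the tensor-product trick $\sigma_{1}\otimes 1, \sigma_{2}\otimes 1, \ldots, 1\otimes\tau_{1}, 1\otimes\tau_{2}$ produces the required anti-commuting families from smaller ones. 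For $n=7$ the element $\omega$ is again central with $\omega^{2}=1$, giving the two-summand splitting. For $n=8$ one verifies $C\ell(8)\cong M(16,\br)$ directly using the generators from $C\ell(7)$.

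Next I would prove the periodicity isomorphism
\[
C\ell(n+8) \cong C\ell(n) \otimes_{\br} M(16,\br),
\]
mentioned in the text as the key tool. The construction: fix an orthonormal basis $e_{1},\dots,e_{n+8}$ for $\br^{n+8}$, choose anti-commuting square roots $E_{1},\dots,E_{8}$ of $-1$ in $M(16,\br) \cong C\ell(8)$, and define $\sigma : \br^{n+8} \to C\ell(n)\otimes M(16,\br)$ by
\[
\sigma(e_{i}) = e_{i}\otimes \Omega \quad (1 \le i \le n), \qquad \sigma(e_{n+j}) = 1\otimes E_{j} \quad (1 \le j \le 8),
\]
where $\Omega = E_{1}E_{2}\cdots E_{8}$ is the volume element of the $C\ell(8)$ factor. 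One checks $\Omega^{2}=1$ and $\Omega$ anti-commutes with each $E_{j}$, which makes the first $n$ generators anti-commute with the last $8$; the first $n$ anti-commute among themselves because $\Omega^{2}=1$, and each squares to $-1$ because $e_{i}^{2}=-1$. The universal property yields an algebra homomorphism $\tilde\sigma$, and a dimension count ($2^{n+8} = 2^{n}\cdot 2^{8}$) plus surjectivity onto the generators finishes the step.

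The main obstacle, and the place I expect the bulk of the bookkeeping to live, is in verifying the sign conventions in the periodicity step: one must confirm that $\Omega$ anti-commutes with each $E_{j}$ (here the parity $8$ matters) and that the resulting $\sigma(e_{i})$ for $1\le i\le n$ actually anti-commute pairwise and square to $-1$, rather than $+1$. Once the periodicity isomorphism is in place, the full table is a finite inductive consequence of the eight base cases, with the isomorphism class of $C\ell(n)$ determined by $n \bmod 8$ and a factor of $M(16,\br)$ applied $\lfloor n/8\rfloor$ times, which, after absorbing matrix sizes via $M(p,K)\otimes_{\br} M(16,\br) \cong M(16p,K)$, reproduces the entries displayed in the proposition's table.
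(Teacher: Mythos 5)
The paper does not actually prove Proposition 8.3: it cites ``Table 1 and the discussion in Chapter 1, section 4 of [LM]'' and only flags the periodicity isomorphism $C\ell(n+8)\approx C\ell(n)\otimes M(16,\br)$ as the key ingredient. So there is no in-text argument to compare yours against, and the useful question is whether your outline is sound and whether it fills in the citation in a reasonable way. It does.

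Your construction of the periodicity map is the standard one, and the sign checks work out for exactly the numerological reasons you suspected. With $\Omega = E_{1}\cdots E_{8}$ and $E_{j}^{2}=-1$, formula 3) of Proposition 5.1 gives $\Omega^{2} = (-1)^{8\cdot 7/2}(-1)^{8} = 1$ because $8\equiv 0 \pmod 4$; and formula 1a) of Proposition 5.1 gives $E_{j}\Omega = \Omega\,\alpha(E_{j}) = -\Omega E_{j}$ because $8$ is even. From these one gets $(e_{i}\otimes\Omega)^{2} = e_{i}^{2}\otimes\Omega^{2} = -1\otimes 1$, pairwise anti-commutation of the $e_{i}\otimes\Omega$ via $\Omega^{2}=1$, and anti-commutation of $e_{i}\otimes\Omega$ with $1\otimes E_{j}$ via $\Omega E_{j}=-E_{j}\Omega$; polarization (char $\ne 2$) then upgrades the basis identities to $\sigma(v)^{2}=-Q(v,v)\cdot 1$ for all $v$. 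One small point worth making explicit in a write-up: surjectivity of the induced map $\tilde\sigma$ is not immediate from ``hitting the generators $\sigma(e_{i})$'' alone, since $e_{i}\otimes\Omega$ and $1\otimes E_{j}$ do not obviously separate the two tensor factors. The fix is one extra line: the image contains $1\otimes M(16,\br)$ (generated by the $1\otimes E_{j}$), hence $1\otimes\Omega$, hence $(e_{i}\otimes\Omega)(1\otimes\Omega) = e_{i}\otimes 1$, hence $C\ell(n)\otimes 1$, hence everything; then the dimension count $2^{n+8}=2^{n}\cdot 16^{2}$ finishes it. For the base cases, your $n=3$ and $n=7$ splittings via central idempotents $\tfrac12(1\pm\omega)$ are fine once you note $\omega^{2}=(-1)^{n(n+1)/2}=1$ for $n\equiv 3 \pmod 4$ and that each summand is isomorphic to the even subalgebra $C\ell(n)^{0}\cong C\ell(n-1)$. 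This is consistent with [LM], which does the periodicity in smaller $+2$ steps involving $C\ell_{0,2}$, $C\ell_{2,0}$, $C\ell_{1,1}$; your direct $+8$ jump is a legitimate shortcut when, as here, you only care about the positive-definite signature.
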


\noindent 1) $C\ell(8k) \hspace {.27in}  \hspace{1in}  A = M(2^{4k}, \br)$

\noindent 2) $C\ell(8k+1) \hspace{1in} A = M(2^{4k}, \bc)$
 
 \noindent 3) $C\ell(8k+2)  \hspace{1in} A = M(2^{4k}, \bh)$

\noindent 4) $C\ell(8k+3) \hspace{1in}  A = M(2^{4k}, \bh) \oplus  M(2^{4k}, \bh)$

\noindent 5) $C\ell(8k+4) \hspace{1in} A = M(2^{4k+1}, \bh)$ 

\noindent 6) $C\ell(8k+5) \hspace{1in} A = M(2^{4k+2}, \bc)$

\noindent 7) $C\ell(8k+6) \hspace{1in} A = M(2^{4k+3}, \br)$
 
\noindent 8) $C\ell(8k+7) \hspace{1in} A = M(2^{4k+3}, \br) \oplus M(2^{4k+3}, \br)$

\section {A special isomorphism between C$\ell(n)$ and a matrix algebra}

\begin{proposition}  Let A denote the matrix algebra, or sum of matrix algebras, in the list 1) through 8) of Proposition 8.3. In each of these cases there exist algebra isomorphisms $\rho : C \ell(8k + \alpha) \rightarrow A$, $0 \leq \alpha \leq 7$  such that $\rho(c(x)) = \rho(x)^{*}$ for all $x \in C \ell(8k + \alpha)$
\end{proposition}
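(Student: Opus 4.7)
The plan is to take an algebra isomorphism $\rho_{0}:C\ell(8k+\alpha)\to A$ supplied by Proposition 8.3 and correct it by an inner automorphism of $A$ so that it intertwines $c$ and $*$. I first reduce the identity $\rho(c(x))=\rho(x)^{*}$ to a condition on generators: both $x\mapsto\rho(c(x))$ and $x\mapsto\rho(x)^{*}$ are anti-homomorphisms $C\ell(8k+\alpha)\to A$, so by 2) of Corollary 1.2 they coincide on all of $C\ell(8k+\alpha)$ as soon as they coincide on a basis $\{e_{1},\dots,e_{n}\}$ of $V$. Since $c(e_{i})=-e_{i}$, it suffices to arrange that each $\rho(e_{i})$ be skew-adjoint.

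Next I would view $A$ as acting $K$-linearly on its standard right $K$-module $W$, where $K=\br$, $\bc$, or $\bh$ is read off from Proposition 8.3 (in the split cases $\alpha=3,7$ take $W=W_{1}\oplus W_{2}$ with componentwise action; the adjoint on $A$ is the one induced by the standard $K$-Hermitian inner product $\langle\cdot,\cdot\rangle_{0}$). Let $\Gamma=\{\pm e_{i_{1}}\cdots e_{i_{r}}:1\le i_{1}<\dots<i_{r}\le n\}\cup\{\pm 1\}$, a finite subgroup of $C\ell(8k+\alpha)^{\times}$ of order $2^{n+1}$, and set
\[
\langle v,w\rangle_{\Gamma}=\sum_{g\in\Gamma}\langle\rho_{0}(g)v,\rho_{0}(g)w\rangle_{0}.
\]
This is again a positive definite $K$-Hermitian inner product on $W$; in the split cases it respects the decomposition because every $\rho_{0}(g)\in A$ is already block-diagonal. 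By construction each $\rho_{0}(g)$ is unitary with respect to $\langle\cdot,\cdot\rangle_{\Gamma}$, so in particular $\rho_{0}(e_{i})$ is unitary, and since $\rho_{0}(e_{i})^{2}=-I$ this forces $\rho_{0}(e_{i})$ to be skew-adjoint with respect to $\langle\cdot,\cdot\rangle_{\Gamma}$.

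Finally, let $P$ be the unique positive self-adjoint operator on $(W,\langle\cdot,\cdot\rangle_{0})$ satisfying $\langle v,w\rangle_{\Gamma}=\langle v,Pw\rangle_{0}$, set $Q=P^{1/2}$, and define $\rho(x)=Q\rho_{0}(x)Q^{-1}$. Both $P$ and $Q$ lie in $A$ (block-diagonally in the split cases), so $\rho$ is another algebra isomorphism $C\ell(8k+\alpha)\to A$, and a short direct calculation turns $\rho_{0}(e_{i})^{*}=-P\rho_{0}(e_{i})P^{-1}$ (the skew-adjointness of $\rho_{0}(e_{i})$ for $\langle\cdot,\cdot\rangle_{\Gamma}$) into $\rho(e_{i})^{*}=-\rho(e_{i})$ for $\langle\cdot,\cdot\rangle_{0}$, completing the proof by the first step.

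The main obstacle I expect is confirming the existence of the positive self-adjoint square root $Q$ in the quaternionic cases, which requires the spectral theorem for quaternionic Hermitian operators as discussed in the reference [R] of Section 7, together with the verification in the split cases $\alpha=3,7$ that averaging, $P$, and $Q$ all remain block-diagonal and therefore inside $A$. A secondary consistency check, guaranteed by part 3) of Proposition 5.1, is that $\omega^{2}=+1$ whenever $c(\omega)=\omega$ and $\omega^{2}=-1$ whenever $c(\omega)=-\omega$, so that the unitarity of $\rho_{0}(\omega)$ produced by the averaging is compatible with the required adjoint behavior of $\rho(\omega)$.
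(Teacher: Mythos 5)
Your argument is correct, and it reaches the same conclusion by a genuinely different route. Where the paper (Lemma 9.2) obtains an invariant inner product by averaging over $Pin(n)$ with Haar measure on its identity component $H$, a finite coset sum, and a further finite sum over $C=\{\pm Id,\pm I,\pm J,\pm K\}$, you average over the finite group $\Gamma$ of signed basis monomials (of order $2^{n+1}$); this is more elementary, avoids Haar measure entirely, and applies uniformly to all eight cases rather than separating the quaternionic ones. And where the paper then builds an $\br$-orthonormal basis of $\bh^{p}$ compatible with $\{I,J,K\}$ (Lemma 9.3) and uses the basis-change isomorphism $\fB$ of Lemma 7.4 to define $\rho=\fB\circ\sigma$, you instead conjugate $\rho_{0}$ by the unique positive Hermitian square root $Q=P^{1/2}$ of the Gram matrix $P$ relating $\langle\cdot,\cdot\rangle_{\Gamma}$ to $\langle\cdot,\cdot\rangle_{0}$. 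The trade-off is which tool you import: the paper's basis construction is an explicit workaround that avoids quoting spectral theory, while your square-root device is shorter and cleaner once the spectral theorem for quaternionic Hermitian matrices (indeed available in [R]) is granted. Your treatment of the split cases $\alpha\equiv 3\pmod 4$ is also sound: since each $\rho_{0}(g)$ is block-diagonal, $W_{1}\perp W_{2}$ under $\langle\cdot,\cdot\rangle_{0}$ implies the same under $\langle\cdot,\cdot\rangle_{\Gamma}$, so $P$ and $Q$ are block-diagonal and lie in $A$. The initial reduction to skew-adjointness of $\rho(e_{i})$ via Corollary 1.2(2) matches the paper's closing step verbatim, and the verification $\rho(e_{i})^{*}=Q^{-1}\rho_{0}(e_{i})^{*}Q=-Q\rho_{0}(e_{i})Q^{-1}=-\rho(e_{i})$ is exactly as you describe.
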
 

\noindent $\mathbf{Remarks}$  

	1) In $ M(n,K) \oplus M(n,K)$  we define the operation $^{*}$ in the natural way, namely, $(X,Y)^{*} = (X^{*}, Y^{*})$ for all $(X,Y) \in M(n,K) \oplus M(n,K)$.
	
	2)  The isomorphism $\rho : C\ell(8k + \alpha) \rightarrow A$ with the properties stated in Proposition 9.1 is not unique.  Let $\rho$ be one such isomorphism, and let g be an element of A such that $1 = g \cdot g^{*} = g^{*} \cdot g$.  If $\rho^{\prime} : C\ell(8k + \alpha,0) \rightarrow A$ is given by $\rho^{\prime}(x) = g \cdot \rho(x) \cdot g^{*}$, then $\rho^{\prime}$ is another such isomorphism.

\begin{proof}   
\noindent  The proof is essentially the same in all cases.  We give the proof only in the most difficult cases 3), 4) and 5), where the division algebra K in question is the quaternions $\bh$.  In each of these three cases we begin with a fixed isomorphism $\sigma : C\ell(8k + \alpha) \rightarrow A$, $\alpha = 2,3,4$.   Throughout the proof we define $p = 2^{4k}$ if $n = 8k+2, p = 2^{4k}$ if $n = 8k+3$ and $p = 2^{4k+1}$ if $n = 8k+4$.  Note that $\sigma(\br^{n})$ acts on $\bh^{p}$ since $\sigma(\br^{n}) \subset \sigma(C\ell(n)) = M(p,\bh)$ or $M(p,\bh) \oplus M(p,\bh)$.  
\newline

\noindent We now break the proof into three steps.  In all of them we regard $\bh^{p}$ as a real vector space of dimension 4p.

\begin{lemma}  There exists a positive definite inner product $\langle , \rangle$ on $\bh^{p}$ such that the elements of $\sigma(\br^{n})$ are skew symmetric relative to $\langle , \rangle$ and the elements $\{I,J,K \}$ in $GL(\br,\bh^{p})$ are both skew symmetric and orthogonal  relative to $\langle , \rangle$.
\end{lemma}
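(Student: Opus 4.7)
The strategy is the classical averaging trick: exhibit a finite subgroup $F \subset GL_{\br}(\bh^p)$ that contains all the operators we need to control, then average any positive definite inner product over $F$ to obtain an $F$-invariant one.

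First I would let $F$ be the subgroup of $GL_{\br}(\bh^p)$ generated by $\{\pm \sigma(e_i) : 1 \le i \le n\}$ together with $\{\pm I, \pm J, \pm K\}$. (In the direct sum case $n=8k+3$, I would first compose $\sigma$ with a projection $\pi_j : M(p,\bh)\oplus M(p,\bh)\to M(p,\bh)$ onto one factor; the images still satisfy $(\pi_j\sigma(e_i))^2 = -Id$, which is all we need.) The key claim is that $F$ is finite. By Lemma 7.3 each of $I,J,K$ commutes with every element of $M(p,\bh)$, hence with each $\sigma(e_i)$. Among themselves $\pm I,\pm J,\pm K$ generate the quaternion group of order $8$. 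Among themselves the $\sigma(e_i)$ satisfy $\sigma(e_i)^2=-Id$ and $\sigma(e_i)\sigma(e_j)=-\sigma(e_j)\sigma(e_i)$ (the Clifford relations), so any word in these generators reduces to a signed monomial $\pm\sigma(e_{i_1})\cdots\sigma(e_{i_k})$ with $i_1<\cdots<i_k$, yielding at most $2^{n+1}$ elements. Thus $|F|\le 8\cdot 2^{n+1}$.

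Next, I would pick any positive definite $\br$-inner product $\langle\cdot,\cdot\rangle_0$ on $\bh^p$ (for example the standard one under the identification $\bh^p\cong \br^{4p}$) and define
$$\langle x,y\rangle \;=\; \sum_{g\in F}\langle gx, gy\rangle_0.$$
This is positive definite and, by construction, $F$-invariant: every $g\in F$ satisfies $g^* g = Id$, where $^*$ denotes the transpose with respect to $\langle\cdot,\cdot\rangle$.

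Finally, I would exploit the fact that every generator $g$ of $F$ — that is, every $\sigma(e_i)$ and each of $I,J,K$ — satisfies $g^2=-Id$. Combined with orthogonality $g^*g=Id$, this forces $g^* = g^{-1} = -g$, so $g$ is skew symmetric. Since the skew symmetric operators form an $\br$-linear subspace, the whole space $\sigma(\br^n) = \br\text{-span}\{\sigma(e_i)\}$ is skew, and $I,J,K$ are simultaneously skew and orthogonal, as required.

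The main point requiring care — which I would flag as the substantive step rather than a routine calculation — is the finiteness of $F$, since that is what makes the averaging sum well defined. The Clifford anticommutation relations and the commutativity of $I,J,K$ with $M(p,\bh)$ (Lemma 7.3) do all the real work. The direct-sum case $n\equiv 3\pmod 8$ is only a minor nuisance, handled by passing through a projection to a single factor.
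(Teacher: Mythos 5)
Your averaging argument takes a genuinely different and cleaner route than the paper's. The paper averages a starting inner product over the positive--dimensional compact group $H = \mathrm{Pin}(n)_{0}$ via Haar measure, then discretely over finitely many cosets, and then over the finite group $C = \{\pm\mathrm{Id}, \pm I, \pm J, \pm K\}$. You instead observe that the subgroup $F$ of $GL_{\br}(\bh^{p})$ generated by $\{\pm\sigma(e_{i})\}$ and $\{\pm I, \pm J, \pm K\}$ is already finite: the Clifford relations collapse every word in the $\sigma(e_{i})$ to a signed monomial, and Lemma 7.3 makes $I,J,K$ commute with all of them. A single finite average then gives the invariant inner product, and the deduction $g^{2}=-\mathrm{Id}$, $g^{*}g=\mathrm{Id}$ $\Rightarrow$ $g^{*}=-g$, plus linearity of the skew condition, finishes the argument. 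This eliminates Haar measure and Lie theory entirely, and for the single--matrix--algebra cases $n = 8k+2$ and $n = 8k+4$ it is correct as stated and noticeably simpler than the paper's proof.

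There is, however, a genuine gap in the direct--sum case $n = 8k+3$, which you dispose of too quickly. The lemma is eventually used in Lemma 9.4, where $\fB$ is the \emph{diagonal} isomorphism $\fB\times\fB$ of $A = M(p,\bh)\oplus M(p,\bh)$, so the conclusion $\rho(x)^{*}=-\rho(x)$ must hold in both coordinates. This forces both $\pi_{1}\sigma(e_{i})$ and $\pi_{2}\sigma(e_{i})$ to be skew symmetric for the \emph{same} inner product $\langle\,,\rangle$ on $\bh^{p}$. Averaging over a finite group built from a single projection controls only that one factor. And you cannot simply enlarge $F$ by adding the other projection's generators: $\pi_{1}\sigma$ and $\pi_{2}\sigma$ are the two inequivalent irreducible representations of $C\ell(n)$, related through the grade involution $\alpha$, so $\pi_{2}\sigma(e_{i}) = -T^{-1}\pi_{1}\sigma(e_{i})T$ for some fixed $T\in GL(p,\bh)$ that need not normalize the image of your monomial group, and the group jointly generated may then be infinite. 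A clean repair within your framework is to run the finite averaging \emph{twice}, once for each projection, obtaining two inner products $\langle\,,\rangle^{(1)},\langle\,,\rangle^{(2)}$ and two $\bh$--orthonormal bases $\fB^{(1)},\fB^{(2)}$ as in Lemma 9.3, and then replace the diagonal $\fB\times\fB$ in Lemma 9.4 by $\fB^{(1)}\times\fB^{(2)}$; this is still an algebra isomorphism of $A$ and yields $\rho(c(x))=\rho(x)^{*}$ exactly as required.
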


\begin{lemma}  Let $\langle , \rangle$ be a positive definite inner product on $\bh^{p}$ as in Lemma 9.2.  Then there exists an orthonormal $\br$- basis $\fB^{\prime} = \{u_{1}, ... , u_{4p} \}$ of $\bh^{p}$ such that for $1 \leq r \leq p$ we have $u_{p+r} = I(u_{r}), u_{2p+r} = J(u_{r})$ and $u_{3p+r} = K(u_{r})$. 
\end{lemma}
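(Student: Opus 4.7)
The plan is to construct the basis inductively, using in an essential way that $I,J,K$ are simultaneously skew-symmetric and orthogonal with respect to $\langle\,,\,\rangle$.

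First I would verify that for any unit vector $v \in \bh^p$, the quadruple $\{v, I(v), J(v), K(v)\}$ is orthonormal. The norms are all $1$ since $I,J,K$ are orthogonal. For orthogonality, skew-symmetry of $I$ gives $\langle v, I(v) \rangle = -\langle I(v), v \rangle = -\langle v, I(v) \rangle$, hence $\langle v, I(v) \rangle = 0$; similarly $\langle v, J(v) \rangle = \langle v, K(v) \rangle = 0$. For the cross terms, combine skew-symmetry with the quaternion relations: for example, $\langle I(v), J(v) \rangle = -\langle v, IJ(v) \rangle = -\langle v, K(v) \rangle = 0$, using that $IJ = K$ (which holds since $I(J(x)) = i(jx) = (ij)x = kx$). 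The other cross terms are handled identically.

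Next I would show that $W := \br\text{-span}\{v, I(v), J(v), K(v)\}$ is invariant under each of $I,J,K$, which is immediate from the relations $I^2 = J^2 = K^2 = -\mathrm{Id}$, $IJ = K$, $JK = I$, $KI = J$ (and their anti-commuting counterparts). Consequently the orthogonal complement $W^{\perp}$ is also $I,J,K$-invariant: if $w \in W^{\perp}$ and $u \in W$, then orthogonality of $I$ gives $\langle I(w), u \rangle = \langle w, I^{-1}(u) \rangle = 0$ since $I^{-1}(u) = -I(u) \in W$, and similarly for $J$ and $K$.

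The construction now proceeds by iteration. Choose any unit $u_1 \in \bh^p$. By the first two steps, $\{u_1, I(u_1), J(u_1), K(u_1)\}$ is orthonormal and its real span $W_1$ has an $I,J,K$-invariant orthogonal complement $W_1^{\perp}$ of real dimension $4p - 4$. Apply the same argument inside $W_1^{\perp}$: pick a unit $u_2 \in W_1^{\perp}$, obtain a second orthonormal quadruple $\{u_2, I(u_2), J(u_2), K(u_2)\}$, pass to its orthogonal complement inside $W_1^{\perp}$, and continue. After $p$ steps the dimension is exhausted, yielding the orthonormal basis
\[
\{u_1, \ldots, u_p,\ I(u_1), \ldots, I(u_p),\ J(u_1), \ldots, J(u_p),\ K(u_1), \ldots, K(u_p)\},
\]
which relabels exactly as required by setting $u_{p+r} = I(u_r)$, $u_{2p+r} = J(u_r)$, $u_{3p+r} = K(u_r)$ for $1 \le r \le p$.

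The main obstacle, though not deep, is the simultaneous use of both skew-symmetry and orthogonality of $I,J,K$: skew-symmetry is what forces $v \perp I(v)$, while orthogonality is what lets us pass to the orthogonal complement after each step. Neither property alone suffices, so some care is needed in checking the relevant identities, but once these two properties are in hand the argument is essentially a quaternionic Gram-Schmidt procedure.
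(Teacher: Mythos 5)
Your proof is correct and follows essentially the same route as the paper: a quaternionic Gram--Schmidt that uses skew-symmetry to make $\{v, I(v), J(v), K(v)\}$ orthonormal, uses orthogonality of $I,J,K$ (equivalently, invariance of the orthogonal complement) to iterate, and relabels at the end. You simply spell out details the paper leaves implicit, such as the cross-term computations via $IJ = K$.
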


\noindent  \noindent $\mathbf{Remark}$ Let $\fB^{\prime} = \{u_{1}, ... , u_{4p} \}$ be an orthonormal $\br$-basis of $\bh^{p}$ as in Lemma 9.3.  Then the first p elements $\{u_{1}, ... , u_{p} \}$ become in a natural way a basis of $\bh^{p}$ as a free $\bh$-module.  Let u be an element of $\bh^{p}$.  By Lemma 9.3 there exist unique  real numbers $\alpha_{r}, \beta_{r}, \gamma_{r}, \delta_{r}, 1 \leq r \leq p$ such that $u = \sum_{r=1}^{p} \alpha_{r} u_{r} + \sum_{r=1}^{p} \beta_{r} I(u_{r}) + \sum_{r=1}^{p} \gamma_{r} J(u_{r}) + \sum_{r=1}^{p} \delta_{r} K(u_{r})$.  Now write $u = \sum_{r=1}^{p} h_{r} u_{r}$, where $h_{r} = \alpha_{r} + i \beta_{r} + j \gamma_{r} + k \delta_{r}$ for $1 \leq r \leq p$.  Conversely, given elements $h_{1}, ... , h_{r} \in \bh$ we reverse the argument above to define $u = \sum_{r=1}^{p} h_{r} u_{r} \in \bh^{p}$.
\newline

\begin{lemma}  Choose a positive definite inner product $\langle , \rangle$ induced by $\sigma$ and an orthonormal basis $\fB^{\prime} = \{u_{1}, ... , u_{4p} \}$ as in Lemma 9.2 and 9.3.  Let $\fB : A \rightarrow A$ be the isomorphism induced by the $\bh$-basis $\fB = \{u_{1}, ... , u_{p} \}$ of $\bh^{p}$ as in Lemma 7.4.   Let $\rho = \fB \circ \sigma : C\ell(n) \rightarrow A$.  Then $\rho$ is an isomorphism such that $\rho(x)^{*} = - \rho(x)$ for all $x \in \br^{n}$.
\end{lemma}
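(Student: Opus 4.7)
The plan is to construct an $\bh$-valued Hermitian inner product $(\cdot,\cdot)_\bh$ on $\bh^p$ from the real inner product $\langle\cdot,\cdot\rangle$ of Lemma 9.2, show that every element of $\sigma(\br^n)$ is skew-Hermitian with respect to $(\cdot,\cdot)_\bh$, and then read off the identity $\rho(x)^* = -\rho(x)$ by computing matrix entries in the $\bh$-basis $\fB = \{u_1,\ldots,u_p\}$. That $\rho = \fB \circ \sigma$ is an algebra isomorphism is immediate, since $\sigma$ is given to be one and $\fB$ is by Lemma 7.4.

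First, by the remark following Lemma 9.3, every $u \in \bh^p$ has a unique expansion $u = \sum_{r=1}^p h_r u_r$ with $h_r \in \bh$. I would set $(u,v)_\bh := \sum_{r=1}^p h_r \overline{k_r}$ for $u = \sum h_r u_r,\ v = \sum k_r u_r$. A direct computation using only the orthonormality of the real basis $\{u_r, Iu_r, Ju_r, Ku_r\}_{r=1}^p$ and the multiplication table of $\bh$ yields the identity $(u,v)_\bh = \langle u,v\rangle - \langle Iu,v\rangle\,i - \langle Ju,v\rangle\,j - \langle Ku,v\rangle\,k$, from which the Hermitian symmetry $(v,u)_\bh = \overline{(u,v)_\bh}$, left $\bh$-linearity in the first slot, and right conjugate $\bh$-linearity in the second all follow by inspection.

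Second, I would show that for $x \in \br^n$, $A := \sigma(x)$ satisfies $(Au,v)_\bh + (u,Av)_\bh = 0$. This is where the two hypotheses combine: Lemma 9.2 gives that $A$, $I$, $J$, $K$ are all skew-symmetric relative to $\langle\cdot,\cdot\rangle$, and Lemma 7.3 (applied to $A \in M(p,\bh)$) gives that $A$ commutes with $I$, $J$, $K$ as left multiplications by $i$, $j$, $k$. Substituting $Au$ into the four-term formula for $(Au,v)_\bh$, commuting $A$ past each of $I$, $J$, $K$, and then applying skew-symmetry of $A$ in each of the four resulting terms reduces $(Au,v)_\bh$ exactly to $-(u,Av)_\bh$.

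Third, I would extract the matrix identity. By construction $(u_r, u_s)_\bh = \delta_{rs}$, so from the defining relation $A(u_i) = \sum_r \rho(x)_{ri}\, u_r$ of Lemma 7.4 one computes $(Au_i, u_j)_\bh = \rho(x)_{ji}$ and $(u_i, Au_j)_\bh = \overline{\rho(x)_{ij}}$. The skew-Hermitian identity therefore reads $\rho(x)_{ji} + \overline{\rho(x)_{ij}} = 0$ for all $i,j$, which by the convention $A^*_{ij} = \overline{A_{ji}}$ of Section 7 is precisely $\rho(x)^* = -\rho(x)$.

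The main obstacle, in my view, is case $n = 8k+3$, where $A = M(p,\bh) \oplus M(p,\bh)$ so $\sigma$ lands in a direct sum rather than a single matrix algebra and the phrase ``$\sigma(\br^n)$ acts on $\bh^p$'' preceding Lemma 9.2 needs to be interpreted componentwise. I would handle this by applying Lemmas 9.2 and 9.3 to each $M(p,\bh)$-factor and its own copy of $\bh^p$, defining $\fB$ and $^*$ componentwise on the direct sum in the obvious way, and then rerunning the three-step argument above in each summand.
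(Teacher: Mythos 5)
Your proof is correct but takes a genuinely different route from the paper's, and your route avoids a gap in the paper's own argument. The paper expands $\sigma(x)(u_r) = \sum_s \rho(x)_{sr}\,u_s$ and asserts that $\rho(x)_{sr} = \langle\sigma(x)(u_r),u_s\rangle$, hence that each entry is real, so that skew-symmetry of $\sigma(x)$ transfers directly to $\rho(x)^{*} = -\rho(x)$. But $\langle\sigma(x)(u_r),u_s\rangle$ recovers only the \emph{real} component of the quaternion $\rho(x)_{sr}$ in its expansion $\alpha + i\beta + j\gamma + k\delta$; the $i,j,k$-components are $\langle I\sigma(x)(u_r),u_s\rangle$, $\langle J\sigma(x)(u_r),u_s\rangle$, $\langle K\sigma(x)(u_r),u_s\rangle$, and these need not vanish. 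For instance in case $n=2$ with $p=1$ and $u_1 = 1 \in \bh$: the matrix $\sigma(e_1) = i$ acts by right multiplication, so $\rho(e_1)_{11} = i$, which is not real, while $\langle\sigma(e_1)u_1,u_1\rangle = \langle i,1\rangle = 0$. The conclusion $\rho(e_1)^{*} = \overline{i} = -i = -\rho(e_1)$ does hold, but not by the paper's stated reasoning. Your argument builds the quaternionic Hermitian form $(u,v)_{\bh} = \langle u,v\rangle - \langle Iu,v\rangle\,i - \langle Ju,v\rangle\,j - \langle Ku,v\rangle\,k$, which packages all four real components at once; skew-Hermitianness of $\sigma(x)$ then follows by commuting $\sigma(x)$ past $I,J,K$ (Lemma 7.3) and using skew-symmetry (Lemma 9.2) four times; and evaluating on the $\bh$-orthonormal basis $\{u_r\}$ gives $\rho(x)^{*}_{rs} = -\rho(x)_{rs}$ entrywise. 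Your componentwise treatment of the case $n = 8k+3$ agrees in spirit with the paper's remark that $\fB$ should then be read as the diagonal isomorphism $\fB \times \fB$.
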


\noindent $\mathbf{Remark }$ In case 4) it would be more precise to say that $\fB$ is the diagonal isomorphism $\fB \times \fB$ induced by $\fB = \{u_{1}, ... , u_{p} \}$.
\newline

\noindent  For the moment we assume that the three lemmas above have been proved, and we use them to prove the Proposition.   Let $A^{\prime}  = \{x \in C\ell(n) : \rho(x)^{*} = \rho(c(x)) \}$.  Since $\rho : C\ell(n) \rightarrow A $ is an algebra isomorphism and c and the transpose operation $^{*}$ are algebra anti-automorphisms it follows  that $A^{\prime}$ is a subalgebra of $C\ell(n)$.  Note that $A^{\prime} \supset \br^{n}$ by Lemma 9.4.  Hence by 2) of Corollary 1.4 we conclude that $A^{\prime} = C\ell(n)$.
\newline

\noindent $\mathit{Proof~ of~ Lemma~ 9.2}$  We consider first the algebra isomorphisms $\sigma : C\ell(8k+2) \rightarrow A = M(2^{4k},\bh)$ and $C\ell(8k+4) \rightarrow A = M(2^{4k+1},\bh)$.  For notational simplicity we let n = 8k+2 and p = $2^{4k}$ in the first case, and n = 8k+4 and p = $2^{4k+1}$ in the second case.
\newline

\noindent Recall that Pin(n) is the set of finite products $x = x_{1} ... x_{m}$ in $C\ell(n)$, where each $ x_{i}$ is  a  unit~ vector  in  $\br^{n}$ and m is an  arbitrary  positive  integer.  The group Pin(n) is compact (cf. Chapter 1, section 1 of [LM]) and it is easy to check that $x \cdot c(x) = c(x) \cdot x = 1$ for all $x \in Pin(n)$ since $x_{i}^{2} = - 1$ for all i. Note that the unit vectors in $\br^{n}$ lie in Pin(n).
\newline

\noindent Let $H = Pin(n)_{0}$, the identity component of Pin(n), and let $x_{1}, ... , x_{N}$ be elements of Pin(n) such that $Pin(n) = \bigcup_{i=1}^{N} x_{i} H$.  Let dH (Haar measure) denote the measure on H induced from the unique bi-invariant volume form $\Omega$ such that $\int_{H}\Omega = 1$.  One of the basic properties of Haar measure dH is that $\int_{H} (f \circ R_{h})~dH =  \int_{H} (f \circ L_{h})~dH = \int_{H} f~ dH$ for all smooth functions $f : H \rightarrow \br$ and all elements h of H.
\newline 

\noindent Now let $\langle , \rangle_{0}$ be an arbitrary positive definite inner product on $\bh^{p}$.  Fix $u,v \in \bh^{p}$ and define $\langle u , v \rangle_{1} = \int_{h \in H} \langle \sigma(h)(u), \sigma(h)(v) \rangle_{0}~dH$.  By the left invariance property of the Haar measure it follows that $\langle , \rangle_{1}$ is a positive definite inner product on $\bh^{p}$ that is preserved by $\sigma(H)$.  Finally, define $\langle u,v \rangle_{2} = \sum_{i=1}^{N} \langle \sigma(x_{i})(u), \sigma(x_{i})(v) \rangle_{1}$.  The inner product $\langle , \rangle_{2}$ is preserved by the elements  $\{ \sigma(x_{1}), ... , \sigma(x_{N}) \}$.  If $h \in H$, then $h_{i} = x_{i} h x_{i}^{-1} \in H$ for $1 \leq i \leq N$.  Hence $\langle \sigma(h) u , \sigma(h) v \rangle_{2} = \sum_{i=1}^{N} \langle \sigma(x_{i}h) u , \sigma(x_{i}h) v \rangle_{1} =    \sum_{i=1}^{N} \langle \sigma(h_{i} x_{i}) u , \sigma(h_{i} x_{i}) v \rangle_{1} =  \sum_{i=1}^{N} \langle \sigma(x_{i}) u , \sigma(x_{i}) v \rangle_{1} = \langle u , v \rangle_{2}$.  This shows that $\langle , \rangle_{2}$ is preserved by $\sigma(H)$.  We conclude that $\langle , \rangle_{2}$ is preserved by $\sigma(Pin(n))$ and in particular by the elements $\sigma(x)$, where x is a unit vector of $\br^{n}$.
\newline

\noindent To prove that $\sigma(x)$ is skew symmetric for all $x \in \br^{n}$ it suffices to consider the case that x is a unit vector.  If x is a unit vector in $\br^{n}$, then $x^{2} = -1$ and hence $\sigma(x)^{2} = - Id$.  The map $\sigma(x)$ is orthogonal relative to $\langle , \rangle _{2}$, and hence it is also skew symmetric  since $\langle \sigma(x) u, v \rangle_{2} = \langle \sigma(x)^{2} u, \sigma(x) v \rangle_{2} = - \langle u , \sigma(x) v \rangle_{2}$ for all u,v $\in \bh^{p}$.
\newline

\noindent Let $C = \{ \pm{Id}, \pm{I}, \pm{J}, \pm{K}\}$.  Note that C is a subgroup of eight elements in $GL(\br, \bh^{p})$ that commutes with the elements of $M(p,\bh) = \sigma(C\ell(n))$ by Lemma 7.3.  For $u,v \in \bh^{p}$ define $\langle u , v \rangle = \sum_{q \in C} \langle q(u) , q(v) \rangle_{2}$.  The inner product $\langle , \rangle$ is positive definite and preserved by both $\sigma(Pin(n))$ and Q since these two groups commute.  Since $I^{2} = J^{2} = K^{2} = - Id$ it follows as above that I,J and K are skew symmetric as well as orthogonal relative to $\langle , \rangle$.
\newline

\noindent  Next we consider the isomorphism $\sigma : C\ell(8k+3) \rightarrow A = M(2^{4k}, \bh) \oplus M(2^{4k}, \bh)$.  Again, for notational simplicity we set $n = 8k+3$ and $p = 2^{4k}$ in the discussion below. 
\newline 

\noindent Let $p_{1} : A \rightarrow M(p, \bh)$ and $p_{2} : A \rightarrow M(p, \bh)$ denote the projections onto the first and second $M(p, \bh)$ factors of A respectively.  From the isomorphism $\sigma : C\ell(n) \rightarrow A$ we obtain the algebra homomorphisms $\sigma_{1} = p_{1} \circ \sigma : C\ell(n) \rightarrow M(p, \bh)$ and $\sigma_{2} = p_{2} \circ \sigma : C\ell(n) \rightarrow M(p, \bh)$.  If $G_{1} = \sigma_{1}(Pin(n))$ and $G_{2} = \sigma_{2}(Pin(n))$, then $G_{1}$ and $ G_{2}$ are compact subgroups of $GL(p,\bh)$ that commute since the first and second factors of $M(p, \bh)$ in A  commute. 
\newline

\noindent   Let G denote the compact Lie group $G_{1} \times G_{2}$.  Note that $\sigma(S^{n-1}) \subset \sigma(Pin(n)) \subset \sigma_{1}(Pin(n)) \times  \sigma_{2}(Pin(n)) = G$. The group G acts on $\bh^{p}$ by $(g_{1}, g_{2})(x) = g_{2}(g_{1}(x)) = g_{1}(g_{2}(x))$.  Now let $\langle, \rangle_{0}$ be an arbitrary positive definite inner product on $\bh^{p}$.  Average it over G, following the argument above, to obtain a positive definite G-invariant inner product $\langle , \rangle_{1}$ on $\bh^{p}$ such that the elements of $\sigma(\br^{n})$ are skew symmetric relative to  $\langle , \rangle_{1}$.  Average  $\langle , \rangle_{1}$ over the finite group C to obtain an inner product $\langle , \rangle$.  The group C commutes with the elements of $M(n,\bh) \times M(n,\bh)$ by Lemma 7.3 and in particular with the elements of G and $\sigma(S^{n-1}) \subset G$.  Hence $\langle , \rangle$ is G invariant, and by the argument above $\langle , \rangle$ satisfies the conditions of Lemma 9.2.
\newline

\noindent $\mathit{Proof~of~Lemma~9.3}$  Let $\langle , \rangle$ be an inner product on $\bh^{p}$ as in Lemma 9.2.  Let $u_{1}$ be a unit vector, and let $U_{1} = \br$-span $\{u_{1}, I(u_{1}), J(u_{1}), K(u_{1}) \}$.  Note that  $\{u_{1}, I(u_{1}), J(u_{1}), K(u_{1}) \}$ is an orthonormal basis of $U_{1}$ since the transformations $I,J,K$ are skew symmetric and orthogonal.  Now consider the orthogonal complement $U_{1}^{\perp}$ of $U_{1}$ in $\bh^{p}$.  The elements of $C = \{\pm Id, \pm I, \pm J, \pm K \}$ leave $U_{1}^{\perp}$ invariant by Lemma 9.2.  Let $u_{2}$ be a unit vector in $U_{1}^{\perp}$ and define $U_{2} = \br$-span $\{u_{2}, I(u_{2}), J(u_{2}), K(u_{2}) \} \subset U_{1}^{\perp}$.   Proceed in this fashion to obtain a real orthonormal basis of $\bh^{p}$ of the form $\{u_{1}, I(u_{1}), J(u_{1}), K(u_{1}) \} \cup \{u_{2}, I(u_{2}), J(u_{2}), K(u_{2}) \} \cup ... \cup \{u_{p}, I(u_{p}), J(u_{p}), K(u_{p}) \}$.  We now obtain the desired basis $\fB^{\prime}$ by defining $u_{p+r} = I(u_{r}), u_{2p+r} = J(u_{r})$ and $u_{3p+r} = K(u_{r})$ for $1 \leq r \leq p$.
\newline

\noindent $\mathit{Proof~of~Lemma~ 9.4}$  Let $\langle , \rangle$ be a positive definite inner product  on $\bh^{p}$  induced by $\sigma$  as in Lemma 9.2. Let $\fB = \{u_{1}, ... , u_{p} \}$ be the $\bh$ basis of $\bh^{p}$ defined in Lemma 9.3 and the following discussion.  Let $x \in \br^{n}$ be given.  Then for $ 1 \leq r \leq p$ we have $\sigma(x)(u_{r}) = \sum_{s=1}^{p} \fB(\sigma(x))_{sr}~u_{s} = \sum_{s=1}^{p} \rho(x)_{sr}~u_{s}$.  For $1 \leq r,s \leq p$ we have $\rho(x)_{sr} = \langle \sigma(x)(u_{r}) , u_{s} \rangle$, which lies in $\br$ since $\langle , \rangle$ has real values on $\bh^{p}$.  By Lemma 9.2 we know that $\sigma(x) : \bh^{p} \rightarrow \bh^{p}$ is skew symmetric relative to $\langle , \rangle$.  For $1 \leq r,s \leq p$ it follows that $\rho(x)^{*}_{rs} = \overline{\rho(x)_{sr}} = \rho(x)_{sr} = \langle \sigma(x)(u_{r}) , u_{s} \rangle = - \langle u_{r}, \sigma(x)(u_{s}) \rangle = - \rho(x)_{rs}$.  This completes the proof.
\end{proof}

\begin{theorem}   Let $k \geq 0$ be an integer, and let $\approx$ denote group isomorphism.   For a positive integer n let $G_{n} = \{ g \in C\ell(n) : g \cdot c(g) = c(g) \cdot g = 1\}$.  Then 

	$1)  G_{8k} \approx O(2^{4k}, \br)$
	
	$2)  G_{8k+1} \approx U(2^{4k})$
	
	$3)  G_{8k+2} \approx Sp(2^{4k})$
	
	$4)  G_{8k+3} \approx Sp(2^{4k}) \times Sp(2^{4k})$
	
	$5)  G_{8k+4} \approx Sp(2^{4k+1})$
	
	$6)  G_{8k+5} \approx U(2^{4k+2})$
	
	$7)  G_{8k+6} \approx  O(2^{4k+3}, \br)$
	
	$8)  G_{8k+7} \approx O(2^{4k+3}, \br) \times O(2^{4k+3}, \br)$
	
\noindent In particular, $G_{n}$ is compact for all positive integers n.
\end{theorem}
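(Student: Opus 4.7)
The plan is to reduce the theorem directly to Proposition 9.1 together with the standard description of the unitary groups over $\br$, $\bc$, and $\bh$. By Proposition 6.1 we may take $G_{n} = \{g \in C\ell(n) : g \cdot c(g) = c(g) \cdot g = 1\}$, and Proposition 9.1 supplies an algebra isomorphism $\rho : C\ell(n) \to A$ satisfying $\rho(c(x)) = \rho(x)^{*}$ for all $x$, where $A$ is the matrix algebra (or sum of two copies of one) listed in Proposition 8.3. The first step is to observe that $\rho$ restricts to a group isomorphism from $G_{n}$ onto $H(A) := \{h \in A : h \cdot h^{*} = h^{*} \cdot h = 1\}$: it is a group homomorphism because $\rho$ is an algebra isomorphism, and the defining equations translate correctly because $\rho(c(g)) = \rho(g)^{*}$, turning $g \cdot c(g) = 1$ into $\rho(g) \cdot \rho(g)^{*} = 1$ and similarly for $c(g) \cdot g = 1$.

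The second step is to identify $H(A)$ in each of the eight residue classes. When $A = M(p,K)$ for $K = \br, \bc, \bh$, the set $H(A)$ is by definition $O(p,\br)$, $U(p)$, $Sp(p)$ respectively. In the two cases where $A = M(p,K) \oplus M(p,K)$, Remark 1 after Proposition 9.1 says that $(X,Y)^{*} = (X^{*}, Y^{*})$, so $H(A)$ decomposes as $H(M(p,K)) \times H(M(p,K))$. Matching this against the list in Proposition 8.3, with $p$ taking the values $2^{4k}$, $2^{4k+1}$, $2^{4k+2}$, or $2^{4k+3}$ in the appropriate residue class mod $8$, gives the eight isomorphisms stated in the theorem.

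The final step is compactness. Each of $O(p,\br)$, $U(p)$, $Sp(p)$ is cut out of the corresponding real matrix algebra by a bounded system of polynomial equations, hence is a closed and bounded, and therefore compact, subset; a finite product of compact groups remains compact, which handles cases $4)$ and $8)$.

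I do not expect any serious obstacle, since the real content has already been absorbed into Proposition 9.1. The one minor point to verify is that the quaternionic adjoint $A_{ij}^{*} = \overline{A_{ji}}$ defined in section 7 produces precisely the group customarily denoted $Sp(p)$; this I would handle by recalling that $Sp(p)$ is by definition the group of $\bh$-linear automorphisms of $\bh^{p}$ preserving the standard quaternionic Hermitian form $\langle x, y \rangle = \sum_{i} x_{i} \overline{y_{i}}$, whose matrix adjoint is exactly the operation $^{*}$ introduced in section 7.
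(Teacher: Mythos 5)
Your proposal is correct and follows essentially the same route as the paper: apply the $\rho$ from Proposition 9.1 to carry $G_n$ isomorphically onto $\{h \in A : h h^* = h^* h = 1\}$, then read off $O$, $U$, $Sp$ (or a product in cases $n \equiv 3 \pmod 4$) from the table in Proposition 8.3. The paper leaves compactness and the identification of $Sp(p)$ with the quaternionic adjoint group implicit, whereas you spell them out, but this is the same argument.
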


\noindent $\mathbf{Remark}$  The groups $G_{8k+1}$ and $G_{8k+5}$ have 1-dimensional center since the groups $U(2^{4k})$ and $U(2^{4k+2})$ have this property.  (See also Corollary 6.3).  In all other cases $G_{n}$ is a semisimple group.

\begin{proof}  Let $\rho : C\ell(n) \rightarrow A$ be an algebra isomorphism as in Proposition 9.1 such that $\rho(c(g)) = \rho(g)^{*}$ for all $g \in C\ell(n)$.    Then $g \in G_{n} \Leftrightarrow 1 = g \cdot c(g) = c(g) \cdot g \Leftrightarrow 1 = \rho(g) \cdot \rho(c(g)) = \rho(c(g)) \cdot \rho(g) \Leftrightarrow 1 = \rho(g) \rho(g)^{*} = \rho(g)^{*} \rho(g)$.  For $K = \br, \bc , \bh$ let $U(n,K) = \{g \in M(n,K) : g \cdot g^{*} = g^{*} \cdot g = 1 \}$.  Then $U(n,K) = O(n, \br)$ if $K = \br$, U(n,K) = U(n) if $K = \bc$ and $U(n,K) = Sp(n)$ if $K = \bh$.
\newline

\noindent  The discussion above shows that $\rho(C\ell(n)) = U(n,K)$ if $n \neq 3~(mod~4)$ and $\rho(C\ell(n)) = U(n,K) \times U(n,K)$ if $n \equiv 3~(mod~4)$.  The eight assertions above now follow directly from the corresponding eight assertions in Proposition 8.3.
\end{proof}

\noindent $\mathbf{References}$
\newline

\noindent [FH] W. Fulton and J. Harris, "Representation Theory, A First Course", Springer, New York, 1991.
\newline

\noindent [H]  F.R. Harvey, "Spinors and Calibrations", Perspectives in Mathematics, vol.9, Academic Press, New York, 1990.
\newline

\noindent [L]  S. Lang, "Algebra" (revised Third Edition), Springer, New York, 2002.
\newline

\noindent [LM] H. B. Lawson and M-L. Michelsohn, "Spin Geometry", Princeton University Press, Princeton, 1989.
\newline

\noindent [R]  L. Rodman, "Topics in Quaternion Linear Algebra", Princeton Series in Applied Mathematics, Princeton University Press, Princeton, 2014.

\end{document}